\documentclass[preprint, 12pt]{elsarticle}

\usepackage{pgfplots} 
\pgfplotsset{compat=1.18}
\usepackage{ifthen} 
\usepackage{srcltx}
\usepackage{eurosym}
\usepackage{mathtools}
\usepackage{amsmath}
\usepackage{amsfonts}
\usepackage{amssymb}
\usepackage{amsthm}
\usepackage{graphicx}
\usepackage{mathrsfs}
\usepackage{xcolor}
\usepackage{lmodern}
\usepackage{exscale}
\usepackage{latexsym}
\usepackage{esvect}
\usepackage{euler}
\usepackage{tikz}
\usepackage{comment}
\usetikzlibrary{shapes}
\usepackage{etoolbox}
\usetikzlibrary{decorations.pathreplacing}

\usepackage[overload]{empheq}
\renewcommand\emph[1]{\color{auburn}{{#1}}}
\definecolor{airforceblue}{rgb}{0.36, 0.54, 0.66}
\definecolor{auburn}{rgb}{0.43, 0.21, 0.1}
\usepackage[colorlinks,plainpages=true,pdfpagelabels,hypertexnames=true,colorlinks=true,pdfstartview=FitV,linkcolor=blue,citecolor=red,urlcolor=black,backref=page]{hyperref}
\PassOptionsToPackage{unicode}{hyperref}
\PassOptionsToPackage{naturalnames}{hyperref}
\definecolor{alizarin}{rgb}{0.82, 0.1, 0.26}

\usepackage{enumerate}
\usepackage[shortlabels]{enumitem}
\usepackage{bookmark}
\usepackage{wasysym}
\usepackage{esint}
\usepackage[titletoc,toc,page]{appendix}

\usepackage[ddmmyyyy]{datetime}
\numberwithin{equation}{section}
\everymath{\displaystyle}
\usepackage[margin=2.2cm]{geometry}

\usepackage[capitalize,nameinlink]{cleveref}
\crefname{section}{Section}{Sections}
\crefname{subsection}{Subsection}{Subsections}
\crefname{condition}{Condition}{Conditions}
\crefname{hypothesis}{Hypothesis}{Hypothesis}
\crefname{assumption}{Assumption}{Assumptions}
\crefname{lemma}{Lemma}{Lemmas}
\crefname{claim}{Claim}{Claims}
\crefname{remark}{Remark}{Remarks}

\crefformat{equation}{\textup{#2(#1)#3}}
\crefrangeformat{equation}{\textup{#3(#1)#4--#5(#2)#6}}
\crefmultiformat{equation}{\textup{#2(#1)#3}}{ and \textup{#2(#1)#3}}
{, \textup{#2(#1)#3}}{, and \textup{#2(#1)#3}}
\crefrangemultiformat{equation}{\textup{#3(#1)#4--#5(#2)#6}}%
{ and \textup{#3(#1)#4--#5(#2)#6}}{, \textup{#3(#1)#4--#5(#2)#6}}%
{, and \textup{#3(#1)#4--#5(#2)#6}}

\Crefformat{equation}{#2Equation~\textup{(#1)}#3}
\Crefrangeformat{equation}{Equations~\textup{#3(#1)#4--#5(#2)#6}}
\Crefmultiformat{equation}{Equations~\textup{#2(#1)#3}}{ and \textup{#2(#1)#3}}
{, \textup{#2(#1)#3}}{, and \textup{#2(#1)#3}}
\Crefrangemultiformat{equation}{Equations~\textup{#3(#1)#4--#5(#2)#6}}%
{ and \textup{#3(#1)#4--#5(#2)#6}}{, \textup{#3(#1)#4--#5(#2)#6}}%
{, and \textup{#3(#1)#4--#5(#2)#6}}

\crefdefaultlabelformat{#2\textup{#1}#3}
\newtheorem{theorem}{Theorem}[section]
\newtheorem{lemma}[theorem]{Lemma}
\newtheorem{corollary}[theorem]{Corollary}

\newtheorem{prop}[theorem]{Proposition}

\newtheorem{defn}[theorem]{Definition}
\newtheorem{remark}[theorem]{Remark}        
  
\numberwithin{equation}{section}

\def\YYint#1#2#3{{\setbox0=\hbox{$#1{#2#3}{\iint}$}
\vcenter{\hbox{$#2#3$}}\kern-.50\wd0}}

\def\XXint#1#2#3{{\setbox0=\hbox{$#1{#2#3}{\int}$}
\vcenter{\hbox{$#2#3$}}\kern-.50\wd0}}

\makeatletter
\def\namedlabel#1#2{\begingroup
\def\@currentlabel{#2}%
\label{#1}\endgroup
}
\makeatother
\makeatletter
\newcommand{\rmh}[1]{\mathpalette{\raisem@th{#1}}}
\newcommand{\raisem@th}[3]{\hspace*{-1pt}\raisebox{#1}{$#2#3$}}
\makeatother

\newcommand{\descref}[2]{\hyperref[#1]{\textup{\textcolor{black}{(}\textcolor{blue}{\bf #2}\textcolor{black}{)}}}}

\makeatletter
\g@addto@macro\normalsize{%
\setlength\abovedisplayskip{3pt}
\setlength\belowdisplayskip{3pt}
\setlength\abovedisplayshortskip{1pt}
\setlength\belowdisplayshortskip{3pt}
}
\makeatletter
\def\ps@pprintTitle{%
\let\@oddhead\@empty
\let\@evenhead\@empty
\def\@oddfoot{}%
\let\@evenfoot\@oddfoot}
\makeatother
\usepackage{setspace}

\newcommand\RR{\mathbb{R}}

\newcommand\ZZ{\mathbb{Z}}

\newcommand{\eps}{\varepsilon}

\DeclareMathOperator{\dv}{div}

\DeclareMathOperator{\dist}{dist}

\DeclareMathOperator{\supp}{supp}

\allowdisplaybreaks

\newcommand{\dert}{\partial_t}
\newcommand{\aeps}{a^1_{\eps}}
\newcommand{\beps}{a^2_{\eps}}
\newcommand{\ceps}{a^3_{\eps}}

\definecolor{Plum}{HTML}{89b02e}

\definecolor{Violet}{HTML}{58429B}

\definecolor{OliveGreen}{HTML}{0d8795}

\usepackage[titletoc]{appendix}

\makeatletter
\def\ps@pprintTitle{%
\let\@oddhead\@empty
\let\@evenhead\@empty
\def\@oddfoot{}%
\let\@evenfoot\@oddfoot}
\makeatother

\begin{document}
\begin{abstract}
    In this article we study the asymptotic behaviour of the solution of the three species chemical reaction-diffusion model with non-homogeneous Neumann boundary condition in a perforated domain. We investigate how the mass inflow at the microscale affects the three-species reaction–diffusion system at the macroscale. When the perforations vanish at a moderate rate, employing two-scale convergence we observe that the microscale effects are captured by a global source term in the homogenized equation, which remains a three-species reaction-diffusion system but with modified diffusion coefficients. However when the perforations vanish rapidly, the influence of the microscale mass inflow becomes negligible and the macroscopic limit remains the same reaction-diffusion system with the same diffusion coefficients.  
\end{abstract}

\begin{frontmatter}
\title{Homogenization of Three Species Reaction Diffusion Equation in Perforated Domains}
\author[sd]{Saumyajit Das}
\ead[sd]{saumyajit.math.das@gmail.com}
\address[sd]{Department of Mathematics, Harish-Chandra Research Institute, A CI of Homi Bhabha National Institute, Chhatnag Road, Jhunsi, Allahabad 211 019, India.}

\author[ks]{Kshitij Sinha}
\ead[ks]{23d0781@iitb.ac.in}
\address[ks]{Department of Mathematics, Indian Institute of Technology Bombay, Powai, Mumbai - 400076, Maharashtra, India.}
\end{frontmatter}

\section{Introduction}
Reaction-diffusion equations describe a wide range of physical phenomenon including reversible chemical kinetics, gas combustion model, diffusion of pollutants into atmosphere or electro-deposition of Nickel-Iron alloy \cite{pierre2010global,alaa2008mathematical, herrero1998global, texier2009global}. The chemical reaction-diffusion equation governs the evolution (in time) of species concentration at various spatial locations in a reversible chemical kinetics with diffusion. The general chemical  reaction-diffusion equation is associated with the following reversible chemical kinetics: 
	\[
	p_1 X_1+\cdots+p_{m}X_{m}{\xrightleftharpoons[k_1]{k_2}}q_1 X_1+\cdots+q_{m}X_{m},
	\]
where $X_i$ denotes the chemical species for $1\leq i\leq m$, $p_i$ and $q_i$ denote the stoichiometric coefficients for $1\leq i\leq m$ and $k_1$, $k_2$ represent the forward and backward reaction rates respectively. In this article, we are interested in the effect of homogenization of the three species chemical reaction diffusion equation governed by the following chemical kinetics 
\[
X_1+X_2 \leftrightharpoons X_3.
\]
The mathematical model corresponding to the above reversible chemical kinetics is given by: for $i=1,2$
\begin{equation} \label{three species reaction diffusion system}
    \left \{
    \begin{aligned}
        \partial_t a^i -d_i \Delta a^i=& a^3-a^1a^2 \qquad && \text{in}\ (0,T)\times\Omega\\
        \partial_t a^3 -d_3 \Delta a^3=& a^1a^2-a^3 \qquad && \text{in}\ (0,T)\times\Omega\\
        \nabla a^i\cdot n= \nabla& a^3\cdot n= 0 \qquad && \text{on}\ (0,T)\times\partial\Omega\\
        a^i(0,x)=& a^i_0 \qquad && \text{in}\ \Omega\\
        a^3(0,x)=& a^3_0 \qquad && \text{in}\ \Omega.
    \end{aligned}
    \right .
\end{equation}
The spatial domain is taken to be a bounded domain $\Omega\subset\mathbb{R}^3$ with $\mathrm C^{2+\nu}$ boundary with $\nu >0$. The unknowns $a^1,a^2,a^3:[0,T)\times\Omega\to\mathbb{R}$, represent the concentrations of the three species $X_1,X_2$ and $X_3$ respectively. Furthermore, here $n(x)$ denotes the outward unit normal to $\Omega$ at the point $x\in\partial\Omega$. The initial data $a^1_{0}, a^2_{0}$ and $ a^3_{0}$ are taken to be sufficiently smooth up to the closure of the domain and nonnegative. The diffusion coefficients $d_{1},d_{2} $ and $d_{3}$ are taken to be strictly positive. This particular model has attracted a significant attraction in the mathematical community, particularly in the study of the global-in-time existence of solutions \cite{pierre2010global, desvillettes2015duality} as well as in the analysis of the large time asymptotics of the solutions \cite{desvillettes2006exponential, das2024convergence}. 

In this article we analyze the effect of homogenization in the three species reaction-diffusion system. This model considers periodically distributed smooth holes inside the domain where mass flows from outside, enters the system and influences the reversible chemical kinetics. We also introduce a security zone near the boundary of the domain where no holes are present. A detailed description of the domain with periodically distributed holes (also called perforated domain) is provided in Section~\ref{description of the domain}. Here the evolution (in time) of densities of chemical species $X_1, X_2$ and $X_3$ are captured by the following model: 
\begin{equation}\label{eq: aeps}
    \left\{
    \begin{aligned}
        & \partial_t a^1_{\varepsilon} - d_1 \Delta a^1_{\varepsilon} = a^3_{\varepsilon} - a^1_{\varepsilon}a^2_{\varepsilon} \qquad && \mbox{in} \ (0,T) \times \Omega_{r_\varepsilon} \\
        & \nabla a^1_{\varepsilon} \cdot n_{\varepsilon} = \varepsilon \psi_1\left(t,x, \frac{x}{\varepsilon} \right) \qquad && \mbox{on} \ (0,T) \times \Gamma_{r_\varepsilon} \\
        & \nabla a^1_{\varepsilon} \cdot n = 0 \qquad && \mbox{on} \ (0,T) \times \partial \Omega \\
        & a^1_{\varepsilon}(0,x) = a^{1}_{\eps,0}(x) \qquad && \mbox{in}\  \Omega_{r_\eps}
    \end{aligned}
    \right.
\end{equation}
\begin{equation}\label{eq: beps}
    \left\{
    \begin{aligned}
        & \partial_t a^2_{\varepsilon} - d_2 \Delta a^2_{\varepsilon} = a^3_{\varepsilon} - a^1_{\varepsilon}a^2_{\varepsilon} \qquad && \mbox{in}\ (0,T) \times \Omega_{r_\varepsilon} \\
        & \nabla a^2_{\varepsilon} \cdot n_{\varepsilon} = \varepsilon \psi_2\left(t,x, \frac{x}{\varepsilon} \right) \qquad && \mbox{on} \ (0,T) \times \Gamma_{r_\varepsilon} \\
        & \nabla a^2_{\varepsilon} \cdot n = 0 \qquad && \mbox{on} \ (0,T) \times \partial \Omega \\
        & a^2_{\varepsilon}(0,x) = a^2_{\eps,0}(x) \qquad && \text{in}\  \Omega_{r_\eps}
    \end{aligned}
    \right.
\end{equation}
and
\begin{equation}\label{eq: ceps}
    \left\{
    \begin{aligned}
        & \partial_t a^3_{\varepsilon} - d_3 \Delta a^3_{\varepsilon} = a^1_{\varepsilon}a^2_{\varepsilon} - a^3_{\varepsilon} \qquad && \text{in}\  (0,T)\times \Omega_{r_\varepsilon} \\
        & \nabla a^3_{\varepsilon} \cdot n_{\varepsilon} = \varepsilon \psi_3\left(t,x, \frac{x}{\varepsilon} \right) \qquad && \text{on} \ (0,T) \times \Gamma_{r_\varepsilon} \\
        & \nabla a^3_{\varepsilon} \cdot n = 0 \qquad && \mbox{on}\ (0,T) \times \partial \Omega \\
        & a^3_{\varepsilon}(0,x) = a^3_{\eps,0}(x) \qquad && \text{in}\  \Omega_{r_\eps}.
    \end{aligned}
    \right.
\end{equation}
Here $\Omega_{r_\eps}\subset \mathbb{R}^3$ denotes a bounded perforated domain, i.e. the domain $\Omega\subset \mathbb{R}^3$ with periodically distributed smooth holes of diameter $r_\eps= \mathcal{O}(\eps^{\alpha})$ with a Security Zone (see Section \ref{description of the domain}). $\Gamma_{\eps}$ be the smooth boundary of the holes and $n_{\eps}(x)$ be the normal at a spatial point $x\in \Gamma_{\eps}$. The Neumann boundary data $\psi_i\left(t,x,\frac x\eps\right)$ along the boundary of the holes, is taken to be nonnegative for each $i=1,2,3$, which mathematically captures the inflow of chemical masses in the reversible chemical reaction through the holes. We assume $\psi \in C^1([0,T], B)$, where $B := C^1\big(\overline{\Omega}; C^1_{\#}(Y)\big)$ ($C^1_{\#}(Y)$ denotes the space of $Y$-periodic $C^1$ functions). For $i=1,2,3$, $a^i_{\eps,0}$ be the initial data (for details see Appendix \ref{sec: appendix well prepared initial condition}) which is sufficiently regular ($\mathrm{W}^{2,p}(\Omega_{r_{\eps}})$ for $p$ large enough), nonnegative and converges to the smooth nonnegative function $a^i_0$ in a suitable sense. We further assume that the initial datum satisfies the following compatibility condition
\begin{align}\label{compatibility condition}
    \nabla a^i_{\eps,0} \cdot n = \psi_{i}\left(0,x,\frac x\eps\right)=0, \ \forall \, i=1,2,3, \ \forall \, x\in \Gamma_{r_\eps}\cup\partial\Omega,
\end{align}
along with the following uniform bound
\begin{align}\label{CI}
\| a^i_{\eps,0}\|_{\mathrm{L}^2(\Omega_{r_\eps})}, \| a^i_{\eps,0}\|_{\mathrm{L}^4(\Omega_{r_\eps})}, \ \| \nabla a^i_{\eps,0}\|_{\mathrm{L}^2(\Omega_{r_\eps})} \leq C^I, \ \forall\, i=1,2,3,
\end{align}
where $C^I$ is some positive constant, uniform in $\eps$. 

Observe that along the boundary of the holes, we have non-homogeneous Neumann data. Homogenization problems with non-homogeneous Neumann data, which are also closely related to the homogenization problems in porous media have attracted significant attraction in the mathematical community. We would like to refer the readers to \cite{donato_neumann_nonhomogeneous, conca2003effective, franchi2016microscopic,hornung1991diffusion, krehel2014homogenization} for various works on this topic. 

Moreover, the source terms appearing on the right hand side of the system \eqref{eq: aeps}-\eqref{eq: ceps} are quadratic in nature, i.e., they are quadratic polynomials. Hence, our problem is also related to nonlinear quadratic parabolic homogenization problem. Similar systems are studied in \cite{franchi2016microscopic, desvillettes2018homogenization}, where authors consider a fragmentation-coagulation model (a particular reaction-diffusion model with infinitely many unknowns), arising from biomedical research such as in the study of Alzheimer disease \cite{franchi2016microscopic}. In \cite{desvillettes2018homogenization} due to the dominance of coagulation the first source term remains bounded from above by zero which need not hold for the three species reaction-diffusion system given by \eqref{eq: aeps}-\eqref{eq: ceps}.
Motivated by the work in \cite{desvillettes2018homogenization, franchi2016microscopic}, we study the effect of homogenization of the perforated domain to the system \eqref{eq: aeps}-\eqref{eq: ceps}. While the authors consider a quadratic model in \cite{desvillettes2018homogenization}, due to the dominance of coagulation the first source term always remains bounded from above by zero. This property need not hold for the three species reaction-diffusion system, given by \eqref{eq: aeps}-\eqref{eq: ceps}. Consequently, our approach for deriving various estimates differs significantly from theirs.

The homogenization problem in this case is to study the effective concentration of the species as the size of the perforations vanish. To study such an effect when the size of the perforations are of order of size $\eps$, we employ the method of two-scale convergence which has been widely used to study the effect of perforated domains for different models (see \cite{allaire_2scale, desvillettes2018homogenization, franchi2016microscopic, salvarani_2scale, nand_dirichlet, nand_neumann}). To apply two-scale convergence we need various uniform (in $\eps$) estimates(see Section \ref{sec: Uniform estimates}), after which passing to the limit as $\eps \to 0$ gives us the effective concentrations. The effective concentrations satisfies similar equation but with an extra reaction rate. 
We will now introduce our main result.

\begin{theorem}\label{main convergence theorem}
    Let the perforations $r_{\eps}$ is of the size $\mathcal{O}(\eps)$. Let $\aeps, \beps$ and $\ceps$ satisfies the system \eqref{eq: aeps}-\eqref{eq: ceps} respectively and the initial condition satisfies compatibility condition \eqref{compatibility condition}, uniform estimate \eqref{CI} and  
    \[
    a^i_{\eps,0}\xrightharpoonup[]{2-scale} a^{i}_0, \ \forall \, i=1,2,3.
    \]
    Then we have the following: for $i=1,2,3$ 
    \begin{equation}
        \left\{
        \begin{aligned}
            \Tilde{a^i_{\eps}} &\xrightharpoonup{2-scale} a^i
            \\
            \Tilde{ \nabla a^i_{\eps}} & \xrightharpoonup{2-scale} \nabla_x a^i(t,x) + \nabla_y a^{i,1}(t,x,y)
            \\
            \Tilde{\dert a^i_{\eps}} & \xrightharpoonup{2-scale} \dert a^i,
        \end{aligned}
        \right. 
    \end{equation}
    where $\Tilde{.}$ denotes the extension by zero outside $\Omega_{r_{\eps}}$
    and $a^i$ is the solution of the following equation: for $i = 1,2$ 
    \begin{equation}\label{eq: hom aeps}
    \left\{
    \begin{aligned}
        \theta \frac{\partial a^i}{\partial t}(t,x) 
        - d_i \dv \left[ D\nabla a^i(t,x) \right] 
        = & \theta \big( a^3(t,x) - a^1(t,x) a^2(t,x)\big) \\
        &+ d_i \int\limits_{\Gamma} \psi_i(t,x,y) 
        \qquad  \qquad  \text{in}\ (0,T) \times \Omega
        \\
         \theta \frac{\partial a^3}{\partial t}(t,x) 
        - d_3 \dv \left[ D\nabla a^3(t,x) \right] 
        =& \theta \big( a^1(t,x) a^2(t,x) - a^3(t,x) \big)\\
        &+ d_3 \int\limits_{\Gamma} \psi_3(t,x,y) 
        \qquad \qquad  \text{in} \ (0,T) \times \Omega
        \\
         D\nabla a^i(t,x) \cdot n =& 0 \qquad  \qquad \qquad \qquad \qquad \quad \text{on} \ (0,T) \times \partial \Omega 
        \\
        D\nabla a^3(t,x) \cdot n =& 0 \qquad  \qquad \qquad \qquad \qquad \quad \text{on} \ (0,T) \times \partial \Omega
        \\
        a^i(0,x) = & a^i_0(x) \qquad \qquad \qquad \qquad \qquad \text{in}\  \Omega 
        \\
         a^3(0,x) = &a^3_0(x) \qquad \qquad \qquad \qquad \qquad \text{in}\ \Omega. 
    \end{aligned}
    \right.
\end{equation}
Here $\theta = |Y^*|$, where $Y^*$ is as defined in \eqref{cond: boundary Gammaeps} and $D$ is a $3 \times 3$ matrix whose entries are given by 
\[
D_{ij}:= \int_{Y^*} \delta_{ij} + \frac{\partial \omega_j}{\partial y_i}
\]
where $\omega_j$ satisfies the following cell problem on the perforated unit cell $Y^*$
    \begin{equation}
    \left\{
    \begin{aligned}
        & -\dv_y [ \nabla_y\omega_j + e_j] = 0 \qquad Y^*
        \\
        & [ \nabla_y\omega_j + e_j]\cdot n = 0 \qquad \Gamma
        \\
        & \omega_j \in \mathrm H^1(Y^*) \text{ and is Y-periodic},
    \end{aligned}
    \right.
\end{equation} 
 and 
\[
a^{i,1}(t,x,y) = \omega_j(y) \frac{\partial a^i}{\partial x_j}(t,x), \ \forall\, i=1,2,3.
\] 
\end{theorem}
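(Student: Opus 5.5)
The plan is the standard two-scale route: derive estimates uniform in $\eps$, extract two-scale limits, and identify the limit system by passing to the limit in the weak formulation with oscillating test functions.

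\textbf{Step 1: uniform estimates.} Using the estimates of Section~\ref{sec: Uniform estimates}, I take for granted bounds, uniform in $\eps$, on $a^i_\eps$ in $\mathrm L^\infty(0,T;\mathrm L^2(\Omega_{r_\eps}))\cap \mathrm L^2(0,T;\mathrm H^1(\Omega_{r_\eps}))$ and on $\dert a^i_\eps$ in $\mathrm L^2((0,T)\times\Omega_{r_\eps})$, for $i=1,2,3$. I also need a bound on the products $a^1_\eps a^2_\eps$, for instance in $\mathrm L^2$, which follows from an $\mathrm L^4$ (or $\mathrm L^\infty_t\mathrm H^1_x$) bound. The time-derivative bound is where the compatibility condition \eqref{compatibility condition} and the bound \eqref{CI} enter.

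\textbf{Step 2: two-scale limits.}
\begin{itemize}
\item By the standard compactness theorem for perforated domains, the zero extensions satisfy, up to a subsequence, $\Tilde{a^i_\eps}\rightharpoonup \chi_{Y^*}(y)\,a^i(t,x)$ and $\Tilde{\nabla a^i_\eps}\rightharpoonup \chi_{Y^*}\bigl(\nabla_x a^i+\nabla_y a^{i,1}\bigr)$, with $a^i\in \mathrm L^2(0,T;\mathrm H^1(\Omega))$ and $a^{i,1}$ $Y$-periodic in $y$. The statement suppresses the factor $\chi_{Y^*}$.
\item The same argument applied to $\dert a^i_\eps$ gives $\Tilde{\dert a^i_\eps}\rightharpoonup \chi_{Y^*}\,\dert a^i$.
\item Strong convergence is needed to handle the quadratic term. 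I would use a uniformly bounded extension operator $P_\eps:\mathrm H^1(\Omega_{r_\eps})\to \mathrm H^1(\Omega)$, available because of the security zone, together with the time-derivative bound and Aubin--Lions. This gives $P_\eps a^i_\eps\to a^i$ strongly in $\mathrm L^2((0,T)\times\Omega)$.
\item By the $\mathrm L^4$ bound and interpolation, the convergence also holds strongly in $\mathrm L^2$ of the products. Hence $\widetilde{a^1_\eps a^2_\eps}$ two-scale converges to $\chi_{Y^*}\,a^1a^2$.
\end{itemize}

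\textbf{Step 3: passing to the limit.}
\begin{itemize}
\item Test the $i$-th equation with $\phi(t,x)+\eps\phi_1(t,x,x/\eps)$, where $\phi\in C^\infty_c([0,T)\times\overline\Omega)$ and $\phi_1$ is $Y$-periodic. This produces the volume terms together with the boundary term
\[
d_i\,\eps\int_0^T\!\!\int_{\Gamma_{r_\eps}}\psi_i(t,x,x/\eps)\,\phi\,d\sigma\,dt .
\]
\item Since $|\Gamma_{r_\eps}|=\mathcal O(\eps^{-1})$ for $r_\eps=\mathcal O(\eps)$, the standard lemma on surface two-scale convergence (Allaire--Damlamian--Hornung) gives convergence of this term to $d_i\int_0^T\!\int_\Omega\!\int_\Gamma \psi_i(t,x,y)\,\phi\,d\sigma_y\,dx\,dt$. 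The part involving $\eps\phi_1$ is $\mathcal O(\eps)$ and vanishes.
\item The volume terms converge by the convergences of Step 2. This yields the two-scale limit system.
\item Taking $\phi=0$ gives the cell problem in $y$. Its unique solution up to constants yields $a^{i,1}=\omega_j\,\partial_{x_j}a^i$.
\item Taking $\phi_1=0$ then gives the weak form of \eqref{eq: hom aeps} with the matrix $D$ and the weight $\theta=|Y^*|$.
\end{itemize}

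\textbf{Step 4: initial data, uniqueness, whole sequence.}
\begin{itemize}
\item The initial condition is recovered from the two-scale convergence of $a^i_{\eps,0}$ and the integration by parts in time.
\item Uniqueness for the limit system comes from a Gronwall argument on differences, using that the solutions are bounded in a space in which the quadratic term is Lipschitz.
\item Uniqueness upgrades subsequential convergence to convergence of the whole sequence.
\end{itemize}

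\textbf{Main obstacle.} I expect the hardest step to be the nonlinear term: obtaining strong compactness of $P_\eps a^i_\eps$ together with enough integrability (beyond $\mathrm L^2$) to conclude that $\widetilde{a^1_\eps a^2_\eps}$ two-scale converges to $\chi_{Y^*}\,a^1a^2$.
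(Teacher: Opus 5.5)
Your proposal follows essentially the same route as the paper's proof. Both use the bounds of Section~\ref{sec: Uniform estimates}, two-scale compactness of the zero extensions, and the extension operator $E_\eps$ together with the time-derivative bound (the paper's Theorem~\ref{extension: time derivative theorem}) and Aubin--Lions to get strong $\mathrm L^2$ convergence for the quadratic term. They then test with $\phi_0+\eps\phi_1$, use surface two-scale convergence for the $\eps\psi_i$ boundary term, and solve the cell problem to identify $a^{i,1}$ and $D$. Your Step~4 goes slightly beyond the paper: you recover the initial datum by integrating by parts in time, where the paper only cites references, and you use a Gronwall uniqueness argument to pass from subsequences to the whole sequence, which the paper's proof never does even though the statement is phrased for the whole sequence.
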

\begin{remark}
    Suppose $a^i_{\eps,0}$ is positive constant, where the constant is independent of $\eps$, for $i=1,2,3$, then a similar result is obtained with the same constant serving as the initial datum. In this case, constant initial data automatically satisfy the compatibility condition \eqref{compatibility condition} as well as the uniform estimate \eqref{CI}.
\end{remark}

The second result of our article concerns the case when the holes vanish rapidly, i.e., when their size is of order $\mathcal{O}(\varepsilon^{\alpha})$ for $\alpha > 1$.
\begin{theorem}\label{theorem: rapid main}
Assume that the perforations $r_{\eps}$ is of the size $\mathcal{O}(\eps^{\alpha})$ for $\alpha>1$. Let $\aeps, \beps$ and $\ceps$ satisfies the system \eqref{eq: aeps}-\eqref{eq: ceps} respectively and the initial condition satisfies compatibility condition \eqref{compatibility condition}, uniform estimate \eqref{CI} and  
    \[
    \Tilde{a^i_{\eps,0}}\to a^{i}_0, \ \mbox{strongly in } \mathrm L^2((0,T);\Omega)\ \forall \, i=1,2,3.
    \]
    Then we have the following: for $i=1,2,3$ 
    \begin{equation}
        \Tilde{a^i_{\eps}} \to \mathfrak{v}_i, \ \text{strongly in } \mathrm L^2((0,T);\Omega)
    \end{equation}
    where $\mathfrak{v}_i$ solves the following equation: for $i = 1,2$
    \begin{equation*}
    \left\{
    \begin{aligned}
    \dert \mathfrak{v}_i 
        - d_i \Delta \mathfrak{v}_i 
        & = \mathfrak{v}_3 - \mathfrak{v}_1\mathfrak{v}_2 
        \qquad && \text{in } (0,T) \times \Omega
        \\
        \dert \mathfrak{v}_3 
        - d_3 \Delta \mathfrak{v}_3 
        & =  \mathfrak{v}_1\mathfrak{v}_2 - \mathfrak{v}_3
        \qquad && \text{in } (0,T) \times \Omega
        \\
        \nabla \mathfrak{v}_i \cdot n & = 0  \qquad && \text{on } (0,T) \times \partial \Omega
        \\
        \nabla \mathfrak{v}_3 \cdot n & = 0  \qquad && \text{on } (0,T) \times \partial \Omega
        \\
        \mathfrak{v}_i(0,\cdot) & = a_0^i(x) \qquad && \text{in } (0,T) \times \partial \Omega
        \\
        \mathfrak{v}_3(0,\cdot) & = a_0^3(x) \qquad && \text{in } (0,T) \times \partial \Omega
    \end{aligned}
    \right.
    \end{equation*}
\end{theorem}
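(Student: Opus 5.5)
The plan for Theorem~\ref{theorem: rapid main} has three steps: uniform bounds, a compactness argument on the extended functions, and passage to the limit in a weak formulation.

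\textbf{Step 1: uniform estimates.} We use the estimates of Section~\ref{sec: Uniform estimates}, redone for holes of radius $r_\eps=\mathcal O(\eps^\alpha)$, $\alpha>1$. The key point is the boundary term. The total area of the holes is
\[
|\Gamma_{r_\eps}|\sim \eps^{-3}\eps^{2\alpha}=\eps^{2\alpha-3}.
\]
A scaled trace inequality on each cell gives
\[
\eps\int_{\Gamma_{r_\eps}}|\psi_i|\,|v| \le C\eps\bigl(\eps^{-1}r_\eps^2\|v\|_{L^2}^2/\eps^{2}+\dots\bigr)^{1/2}|\Gamma_{r_\eps}|^{1/2},
\]
so it is bounded by $C\eps^{\beta}\|v\|_{H^1(\Omega_{r_\eps})}$ with $\beta>0$ whenever $\alpha>1$. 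Consequently the inflow term is small rather than merely bounded.

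Next we test \eqref{eq: aeps}--\eqref{eq: ceps} with $a^i_\eps$ and use the nonnegativity of the solutions. For the quadratic terms we use the duality/entropy structure: adding the equations for $a^1+a^3$ and $a^2+a^3$ gives $L^1$ control, and we obtain $L^2$ control through the Pierre duality lemma on the perforated domain, as in the earlier section. Together with \eqref{CI} and the compatibility condition \eqref{compatibility condition}, this yields bounds uniform in $\eps$:
\[
a^i_\eps \in L^\infty(0,T;H^1(\Omega_{r_\eps})), \qquad \dert a^i_\eps\in L^2(0,T;L^2(\Omega_{r_\eps})), \qquad a^i_\eps\in L^4.
\]
The bound on $\dert a^i_\eps$ comes from testing with $\dert a^i_\eps$; the compatibility condition is used there to handle the time-dependent boundary term.

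\textbf{Step 2: compactness.} For $\alpha>1$ the holes are small, and we use a uniformly bounded extension operator $P_\eps:H^1(\Omega_{r_\eps})\to H^1(\Omega)$. Such an operator exists by rescaling the standard extension on a cell containing a hole of relative size $\eps^{\alpha-1}\le 1$ (Cioranescu–Saint Jean Paulin). The extended functions $P_\eps a^i_\eps$ are then bounded in $L^2(0,T;H^1(\Omega))\cap H^1(0,T;(H^1)')$. By Aubin–Lions they converge strongly in $L^2((0,T)\times\Omega)$ to some $\mathfrak v_i$, and $\nabla P_\eps a^i_\eps\rightharpoonup\nabla\mathfrak v_i$ weakly. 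Since the removed volume satisfies $|\Omega\setminus\Omega_{r_\eps}|\sim\eps^{3\alpha-3}\to0$ and the $L^4$ bound holds, we get $\Tilde{a^i_\eps}-P_\eps a^i_\eps\to0$ in $L^2$ by Hölder. This gives the claimed strong convergence.

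\textbf{Step 3: passage to the limit.} Take a smooth test function $\varphi(t,x)$ on $[0,T]\times\overline\Omega$ with $\varphi(T)=0$ and write the weak form on $\Omega_{r_\eps}$ as integrals over $\Omega$ with the characteristic function $\chi_\eps$. We have $\chi_\eps\to1$ strongly in every $L^p$, $p<\infty$. The terms pass to the limit as follows.
\begin{itemize}
\item The diffusion term converges because $\chi_\eps\nabla a^i_\eps=\chi_\eps\nabla P_\eps a^i_\eps\rightharpoonup\nabla\mathfrak v_i$.
\item The reaction terms $a^1_\eps a^2_\eps$ converge in $L^1$, since products of strongly $L^2$-convergent sequences converge in $L^1$.
\item The initial term converges by the assumed strong convergence of $\Tilde{a^i_{\eps,0}}$.
\item The boundary term vanishes by the estimate of Step 1, which gives the bound $C\eps^\beta\|\varphi\|_{L^2H^1}\to0$.
\end{itemize}
Thus $\mathfrak v$ is a weak solution of the unperturbed system with homogeneous Neumann data. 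Uniqueness of such solutions in the class $L^\infty L^2\cap L^2H^1\cap L^4$ (a Gronwall argument using the $L^4$ bounds) upgrades convergence along subsequences to convergence of the whole family.

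\textbf{Main obstacle.} I expect the main obstacle to be Step 1. The trace constant degenerates on small holes, so the trace inequality has to be scaled carefully so that the $\eps$ prefactor in the Neumann data beats it exactly when $\alpha>1$. Obtaining the uniform $L^2$/$L^4$ bounds for the quadratic system without a sign on the source terms is the other difficulty. For the trace estimate I would first try the per-cell inequality
\[
r\int_{\partial B_r}|v|^2\le C\bigl(\|v\|^2_{L^2(B_{2r}\setminus B_r)}+r^2\|\nabla v\|^2_{L^2(B_{2r}\setminus B_r)}\bigr),
\]
applied in each cell with comparison at scale $\eps$, and then sum over the cells.
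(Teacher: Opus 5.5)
Your proposal is correct and follows essentially the paper's route. Both rely on the uniform bounds of Section~\ref{sec: Uniform estimates}, with the inflow term now negligible since $\eps|\Gamma_\eps|\to 0$. Both then use a uniformly bounded extension with Aubin--Lions, the vanishing hole volume ($\chi_{\Omega_{r_\eps}}\to 1$) to transfer strong convergence to the zero extensions, and a term-by-term limit in the weak formulation.

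Your uniqueness/Gronwall step, which upgrades subsequential convergence to convergence of the whole family, goes slightly beyond what the paper writes. In Step 2, justify $\chi_{\Omega\setminus\Omega_{r_\eps}}P_\eps a^i_\eps\to 0$ through the $L^2(0,T;L^6(\Omega))$ bound on $P_\eps a^i_\eps$, or through strong convergence plus $|\Omega\setminus\Omega_{r_\eps}|\to 0$. The $L^4$ bound you cite holds only on $\Omega_{r_\eps}$ and says nothing about the extension inside the holes.
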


\begin{remark}
    A more general study could be possible with the non-homogeneous Neumann data is of the form $\eps^{\beta}\psi\left(t,x,\frac{x}{\eps}\right)$, for some $\beta\geq 0$. A thorough investigation of this question is left for future work.
\end{remark}

\subsection{Description of Domain}\label{description of the domain}

\begin{figure}[h] 
    \centering
    \begin{minipage}{0.45\textwidth}
        \centering
        \begin{tikzpicture}[remember picture, scale=3]
   
    \def\outerR{1.0}
    \def\innerR{0.8}
    \def\holeR{0.02}      
    \def\side{0.1}       
    \def\halfside{0.05}   

    \draw[thick, fill=green!30] (0,0) circle (\outerR);

    \begin{scope}
        \clip (0,0) circle (\innerR);

        \foreach \x in {-0.9, -0.8, ..., 0.9} {
            \foreach \y in {-0.9, -0.8, ..., 0.9} {
                
                \pgfmathparse{sqrt(\x*\x + \y*\y)}
                \let\dist\pgfmathresult
                
%
                
                \pgfmathparse{(\dist + \holeR < \innerR) ? 1 : 0}
                \ifnum\pgfmathresult=1
                    \draw[thin, gray, dotted] (\x-\halfside, \y-\halfside) rectangle (\x+\halfside, \y+\halfside);
                    
                    \draw[fill=white, thin] (\x,\y) circle (\holeR);
                    \draw[fill=white, thin] (0,0) circle (\holeR);
    		   \coordinate (holecenter) at (0,0);
                \fi

            }
        }
    \end{scope}

    \draw[thick, dotted] (0,0) circle (\innerR);

    \draw[black, thick] (\innerR, 0) -- (\outerR, 0) 
        node[midway, above, text=black] {\small $\delta$};

\end{tikzpicture}
        \caption{Domain with periodically distributed holes.}
        \label{fig: domain}
    \end{minipage}
    \hfill
    \begin{minipage}{0.45\textwidth}
        \centering
        \begin{tikzpicture}[remember picture, scale=3]
  
   \draw[thick, fill=green!30] (0, 0) rectangle (1,1);
   
\node at (0.15, 0.85) {$Y^*$};
     \draw[fill=white, thin] (0.5,0.5) circle (0.25);
 	\node at (0.5, 0.5) {$T$};
	\coordinate (unitcenter) at (0.4, 0.5);

\end{tikzpicture} 
        \caption{Perforated unit cell}
        \label{fig: unit cell}
    \end{minipage}
    \begin{tikzpicture}[remember picture, overlay]
        \draw[-, >=stealth, bend left=20, very thick, dotted, red] (holecenter) to node[midway, above, text=blue] {\small } (unitcenter);
    \end{tikzpicture}
\end{figure}

In this paper we reserve the standard notation $Y$ for the unit periodic cell $\left[-\frac{1}{2},\frac{1}{2} \right)^3$. Let $T \subset int(Y)$ be such that it has a smooth boundary $\Gamma$. Define $Y^*$ as 
\begin{align}\label{area outside the hole in unit cube}
Y^* = Y - \overline{T}
\end{align}
which is sometimes called as solid part or material part (see Figure \ref{fig: unit cell}).
Let $\Omega$ be an open, bounded and connected subset of $\RR^3$ with smooth boundary $\partial \Omega$. Fix $\delta>0$ and define the following set 
\begin{equation}\label{security zone}
    \Omega^{\delta} := \{ x \in \Omega: \dist{(x,\partial \Omega)} > \delta \} 
\end{equation}
and $\Omega \setminus \Omega^{\delta}$ is referred to as "Security zone"(see \cite{MR1676922, MR1932963}).
For parameter $\eps>0$ define the indexing sets $\mathcal I_{\eps}$ and $\mathcal J_{\eps}$ as 
\begin{equation}\label{index of holes}
    \begin{aligned}
        \mathcal I_{\eps}& := \{ k \in \ZZ^3: (\eps k + r_\eps\overline{T}) \subset \Omega^{\delta} \}
        \\
        \mathcal J_{\eps}& := \{ k \in \ZZ^3:  \Omega \cap (\eps k + \eps Y) \text{ is non-empty} \} \setminus \mathcal I_{\eps}
    \end{aligned}
\end{equation}
Let us define the following sequence of real numbers $\sigma_{\eps}$ as follows 
\[
\sigma_{\eps}:= \frac{r_{\eps}}{\eps}
\]
Let us define the sets $R_{\eps}$ and $S_{\eps}$ as follows
\begin{equation*}
    \begin{aligned}
        R_{\eps}&:= \bigcup_{k\in \mathcal I_{\eps}} \big( \eps k + \eps (Y\setminus \sigma_{\eps}\overline{T} ) \big) 
        \\
        S_{\eps}& := \bigcup_{k \in \mathcal{J}_{\eps}}(\eps k + \eps Y).
    \end{aligned}
\end{equation*}
Hence, we define our periodically perforated domain $\Omega_{r_{\eps}} $(see \cite{desvillettes2018homogenization, franchi2016microscopic})
\[
\Omega_{r_{\eps}}:=  \Omega \cap (R_{\eps}\sqcup S_\eps) .
\]
Observe that $\partial \Omega_{r_{\eps}}$ comprises of two boundaries 
\[
\partial \Omega_{r_{\eps}} = \partial \Omega \cup \Gamma_{\eps},
\]
where $\Gamma_{\eps}$ is the union of the boundary of all the holes lying inside $\Omega^{\delta}$ (see Figure \ref{fig: domain}), i.e.,
\[
\Gamma_{\eps}:= \bigcup_{k\in \mathcal I_{\eps}} \left\{ x \in \Omega : x \in \eps k + r_\eps \Gamma \right\}.
\]
Let 
\[
\gamma  := \lim_{\eps \to 0}\eps \left| \Gamma_{\eps} \right| .
\]
Observe that 
\begin{equation}\label{cond: boundary Gammaeps}
\left\{
\begin{aligned}
    & \gamma = | \Gamma | \frac{|\Omega|}{|Y|}, \qquad && r_{\eps} = \mathcal{O}(\eps)
    \\
    & \gamma = 0, \qquad && r_{\eps} = \mathcal{O}(\eps^{\alpha}) \text{ for } \alpha >1
    \\
    & \eps^3 \times \# \mathcal{I}_{\eps} \leq \gamma_1,
    \end{aligned}
    \right.
\end{equation}
where $\# \mathcal{I}_{\eps}$ denote the number of holes in $\Omega_{r_{\eps}}$ and $\gamma_1$ is a positive constant independent of $\eps$(see \cite{AllaireDamlamianHornung1996}).

\subsection{Plan of the Paper}
We divide this article in two sections and several appendices. In Section \ref{sec: Uniform estimates} we derive the uniform estimates which is required for the two-scale convergence in Lemma \ref{L2 integral estimate}- Proposition \ref{L2 integral estimate time derivative}. In Section \ref{sec: Homogenization eps}, we give the prove the Theorem \ref{main convergence theorem} using the two-scale convergence method. In Section \ref{sec: Homogenization rapid}, we derive the Theorem \ref{theorem: rapid main} by passing the limit using the compactness in $\mathrm L^2$ for rapidly decaying hole. In Appendix \ref{sec: appendix existence theory} we derive the global-in-time existence of the solution to the system \eqref{eq: aeps}-\eqref{eq: ceps}, and obtain regularity estimates. In Appendix \ref{sec: appendix extension operator}-\ref{sec: appendix two-scale}, we discuss various classical tools which are extensively used in our article such as the existence of extension operator in perforated domain, the trace inequality, Anisotropic Sobolev inequality and two scale convergence method. In the final Appendix \ref{sec: appendix well prepared initial condition}, we construct well-prepared initial data demonstrating that the homogenization of the model considered in this article can also be carried out for certain non-constant initial data. These initial datum are nonnegative, sufficiently smooth, satisfy the compatibility condition \eqref{compatibility condition}, uniform estimates as described in \eqref{CI}, and converge to smooth, nonnegative functions in the two-scale sense.    

\section*{Acknowledgment}
SD acknowledges funding by the Department of Atomic Energy (DAE), Government of India, in the form of Postdoctoral Research Fellowship. KS acknowledges funding by the NBHM-DAE (Government of India), in the form of PhD Fellowship.

\section{Uniform Estimates}\label{sec: Uniform estimates}

We start with the following uniform (in $\eps$) $\mathrm{L}^2$-integral estimate of the solution. It can be proved along a similar line as in \cite[Lemma 2.1]{desvillettes2018homogenization}, \cite{franchi2016microscopic}. The same estimate when there is no perforation in the domain can be found in \cite{desvillettes2007global}. Thus we omit the proof of the result.
\begin{lemma}\label{L2 integral estimate}
    Let $a^1_{\eps},a^2_{\eps}$ and $a^3_{\eps}$ be the solution to the system \eqref{eq: aeps}-\eqref{eq: ceps}. Then we have 
    \begin{equation}\label{uniform: L2L2}
        \Vert a^i_{\eps} \Vert_{\mathrm L^2((0,T);\mathrm L^2(\Omega_{r_{\eps}}))} \leq \kappa_1, \ \forall \, 1 \leq i \leq 3,
    \end{equation}
    where $\kappa_1$ is a positive constant, independent of $\eps$.
\end{lemma}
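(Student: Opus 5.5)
We will prove the bound with the duality method of Pierre--Schmitt, in the form used in \cite{desvillettes2007global, desvillettes2018homogenization}, adapted to the perforated domain with the Neumann inflow on $\Gamma_\eps$. Nonnegativity of $a^i_\eps$ follows from the quasi-positivity of the reaction terms and $\psi_i\ge 0$ (Appendix \ref{sec: appendix existence theory}). The reaction terms cancel in the combinations $a^1_\eps+a^3_\eps$ and $a^2_\eps+a^3_\eps$. So it suffices to bound $u:=a^1_\eps+a^3_\eps$ (and similarly $a^2_\eps+a^3_\eps$) in $\mathrm L^2((0,T)\times\Omega_{r_\eps})$. Each $a^i_\eps$ is then controlled by nonnegativity.

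\textbf{Step 1: the parabolic inequality for $u$.} Set $M:=(d_1a^1_\eps+d_3a^3_\eps)/u\in[\min d_i,\max d_i]$. Then $u$ solves
\[
\partial_t u-\Delta(Mu)=0,\qquad \nabla(Mu)\cdot n_\eps=\eps\,(d_1\psi_1+d_3\psi_3)\ \text{on }\Gamma_\eps,\qquad \nabla(Mu)\cdot n=0\ \text{on }\partial\Omega .
\]

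\textbf{Step 2: duality.} For an arbitrary $0\le H\in\mathrm L^2((0,T)\times\Omega_{r_\eps})$, solve the backward dual problem
\[
-(\partial_t\phi+M\Delta\phi)=H,\qquad \phi(T)=0,\qquad \nabla\phi\cdot n=0\ \text{on }\partial\Omega_{r_\eps}.
\]
Multiplying by $\phi$ and by $-\Delta\phi$ gives the standard estimates, with a constant depending only on $\min d_i,\max d_i,T$:
\[
\|\Delta\phi\|_{\mathrm L^2}+\sup_t\|\nabla\phi\|_{\mathrm L^2}\le C\|H\|_{\mathrm L^2}.
\]
These are uniform in $\eps$, since only integrations by parts with homogeneous Neumann data are used. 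We also need $\|\phi(0)\|_{\mathrm L^2}$ and $\|\phi\|_{\mathrm L^2\mathrm H^1}$. For these we write $\phi(t)=\int_t^T(H+M\Delta\phi)$, which bounds $\sup_t\|\phi(t)\|_{\mathrm L^2}\le C\|H\|$.

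Testing the equation for $u$ against $\phi$ gives
\[
\int_0^T\!\!\int u H=\int u(0)\phi(0)+\eps\int_0^T\!\!\int_{\Gamma_\eps}(d_1\psi_1+d_3\psi_3)\,\phi\,d\sigma\,dt .
\]
The first term is bounded by $C^I\|\phi(0)\|_{\mathrm L^2}$ using \eqref{CI}.

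\textbf{Step 3: the boundary term, which is the main obstacle.} We use the trace inequality on perforated domains (Appendix \ref{sec: appendix extension operator}):
\[
\eps\int_{\Gamma_\eps}|\phi|^2\le C\big(\|\phi\|^2_{\mathrm L^2(\Omega_{r_\eps})}+\eps^2\|\nabla\phi\|^2_{\mathrm L^2(\Omega_{r_\eps})}\big).
\]
Together with $\eps|\Gamma_\eps|\le C$ from \eqref{cond: boundary Gammaeps} and $\psi\in\mathrm L^\infty$, Cauchy--Schwarz bounds the boundary term by $C\|\phi\|_{\mathrm L^2(0,T;\mathrm H^1)}\le C\|H\|$. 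For $r_\eps=\mathcal O(\eps^\alpha)$ the trace constant must be checked, since it scales with $\sigma_\eps$. I would instead use the weaker bound $\eps\int_{\Gamma_\eps}|\phi|\le C\sigma_\eps^{2}\ldots$, or alternatively $\phi\in\mathrm L^\infty$ via the $\mathrm H^1$ bound combined with $\eps|\Gamma_\eps|\to\gamma$ bounded. A duality argument in $\mathrm L^1$ also suffices here, because $\psi\ge0$ and the inflow term has total mass $\le C\eps|\Gamma_\eps|$.

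\textbf{Conclusion.} Taking the supremum over $H$ gives $\|u\|_{\mathrm L^2}\le\kappa_1$, and hence \eqref{uniform: L2L2}.
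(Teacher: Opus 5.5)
Your proposal is correct and is essentially the argument the paper intends. The paper omits the proof and defers to the duality method of \cite[Lemma 2.1]{desvillettes2018homogenization} and \cite{desvillettes2007global}: a Pierre-type duality estimate for the reaction-free combinations $a^1_\eps+a^3_\eps$ and $a^2_\eps+a^3_\eps$, with the inflow term on $\Gamma_\eps$ controlled by the perforated-domain trace inequality and the bound on $\eps|\Gamma_\eps|$, exactly as in your Steps 1--3.

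For $r_\eps=\mathcal O(\eps^\alpha)$ with $\alpha>1$, keep your first option. Lemma~\ref{app: rapid trace theory} together with $|\Gamma_\eps|\le C\eps^{-3}r_\eps^2$ gives $\eps\int_{\Gamma_\eps}|\phi|\le C_\eps\|\phi\|_{\mathrm H^1(\Omega_{r_\eps})}$ with $C_\eps\to0$ (namely $C_\eps=C\sigma_\eps^2$ for $1<\alpha\le 3$). Drop the two fallbacks: $\mathrm H^1\not\hookrightarrow\mathrm L^\infty$ in three dimensions, and an $\mathrm L^1$ (mass) bound alone does not yield the $\mathrm L^2$ estimate.
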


We now move to establish similar uniform (in $\eps$) $\mathrm{L}^2$-integral estimate of the gradient of the solution. We have the following lemma.
  
\begin{lemma}\label{anisotropic norm estimate}
    Let $a^1_{\eps},a^2_{\eps}$ and $a^3_{\eps}$ be the solution to the system \eqref{eq: aeps}-\eqref{eq: ceps}. Then we have 
    \begin{equation}\label{uniform: LinftyL2 and grad L2L2 }
        \Vert a^i_{\eps} \Vert_{\mathrm L^{\infty}((0,T);\mathrm L^2(\Omega_{r_{\eps}}))} + \Vert \nabla a^i_{\eps} \Vert_{\mathrm L^2((0,T);\mathrm L^2(\Omega_{r_{\eps}}))} \leq \kappa_2 \qquad \forall 1 \leq i \leq 3,
    \end{equation}
    where $\kappa_2$ is a positive constant, independent of $\eps$.
\end{lemma}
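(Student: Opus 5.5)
The plan is an energy estimate that avoids any sign condition on the nonlinear sources. The key observation is that the combinations $a^1_\eps + a^3_\eps$ and $a^2_\eps + a^3_\eps$ have no reaction term. We only need to control the quadratic source in the individual equations, and we do this by testing with suitable functions and using nonnegativity of the solutions. Nonnegativity is inherited from nonnegative data and $\psi_i\ge 0$ by the quasi-positivity of the reaction terms; the existence theory in the appendix is assumed to provide it.

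\textbf{Step 1 (test $a^3_\eps$ with itself).} Multiplying the $a^3_\eps$ equation by $a^3_\eps$ and integrating over $\Omega_{r_\eps}$ gives
\[
\tfrac12\tfrac{d}{dt}\|a^3_\eps\|^2_{L^2} + d_3\|\nabla a^3_\eps\|^2_{L^2} + \|a^3_\eps\|^2_{L^2} = \int_{\Omega_{r_\eps}} a^1_\eps a^2_\eps a^3_\eps + d_3\,\eps\int_{\Gamma_\eps}\psi_3 a^3_\eps .
\]
The cubic term has the wrong sign, so we instead test the equations for $a^1_\eps$ and $a^2_\eps$. These have the good dissipative term $-\int a^1_\eps a^2_\eps a^i_\eps\le 0$, since all concentrations are nonnegative. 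Their only bad contribution is $\int a^3_\eps a^i_\eps$, which is bounded by $\|a^3_\eps\|_{L^2}\|a^i_\eps\|_{L^2}$.

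\textbf{Step 2 (handle $a^3_\eps$ separately).} To close the argument, we treat $a^3_\eps$ by adding the equations. The sum $u_\eps := a^1_\eps + a^3_\eps$ satisfies the duality-type structure $\partial_t u_\eps - \Delta(d_1 a^1_\eps + d_3 a^3_\eps) = 0$. Alternatively, and more simply, the $L^2((0,T);L^2)$ bound of Lemma~\ref{L2 integral estimate} already controls $\int_0^T\|a^3_\eps\|^2$. So we test the $a^1_\eps$ and $a^2_\eps$ equations with $a^1_\eps$ and $a^2_\eps$, integrate in time, and use Lemma~\ref{L2 integral estimate} to bound $\int_0^T\int a^3_\eps a^i_\eps\le \kappa_1^2$. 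For $a^3_\eps$ we test with $a^3_\eps$, and we bound the cubic term through the duality estimate or an $L^4$-type bound on $a^1_\eps,a^2_\eps$, obtained by testing their equations with $(a^i_\eps)^3$. In that test the dissipative term is again signed, which explains why $L^4$ norms of the initial data appear in \eqref{CI}. The term $\int (a^1_\eps a^2_\eps)^2$ is then handled by Hölder's inequality with these $L^4$ bounds.

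\textbf{Step 3 (boundary terms).} For the hole boundary terms $\eps\int_{\Gamma_\eps}\psi_i a^i_\eps$, we use the trace inequality for perforated domains from the appendix,
\[
\eps\int_{\Gamma_\eps}|v|\,d\sigma \le C\big(\|v\|_{L^1(\Omega_{r_\eps})} + \eps\|\nabla v\|_{L^1(\Omega_{r_\eps})}\big),
\]
or its $L^2$ version $\eps\|v\|^2_{L^2(\Gamma_\eps)}\le C(\|v\|^2_{L^2}+\eps^2\|\nabla v\|^2_{L^2})$, combined with the bound $\eps|\Gamma_\eps|\le C$ from \eqref{cond: boundary Gammaeps}. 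Together with Cauchy–Schwarz and Young's inequality, this gives
\[
\eps\int_{\Gamma_\eps}\psi_i a^i_\eps \le C\|\psi\|_\infty^2 + C\|a^i_\eps\|^2_{L^2} + \tfrac{d_i}{2}\|\nabla a^i_\eps\|_{L^2}^2 .
\]
We then absorb the gradient term, integrate in time, and apply Gronwall's lemma with \eqref{CI}, which yields \eqref{uniform: LinftyL2 and grad L2L2 } with constants independent of $\eps$. This uses that the constants in the trace inequality are uniform, by the construction of an extension operator with a uniform bound.

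\textbf{Main obstacle.} The hardest point is controlling the cubic term $\int a^1_\eps a^2_\eps a^3_\eps$ in the $a^3_\eps$ estimate uniformly in $\eps$, since no sign structure removes it. My first attempt is the $L^4$ estimate for $a^1_\eps$ and $a^2_\eps$ from Step 2, with its signed dissipation. If that fails to close, the fallback is to use the $L^2$ duality bound on $(a^1_\eps+a^3_\eps)$ and $(a^2_\eps+a^3_\eps)$ from Lemma~\ref{L2 integral estimate} together with the three-dimensional Sobolev embedding, applied through the extension operator.
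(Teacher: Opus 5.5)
Your treatment of $a^1_\eps,a^2_\eps$ is the same as the paper's. You test with $a^i_\eps$, drop $-\int a^1_\eps a^2_\eps a^i_\eps\le 0$, and bound $\int_0^T\!\int a^3_\eps a^i_\eps$ by Lemma~\ref{L2 integral estimate}. The hole term is handled by the trace inequality, $\eps|\Gamma_\eps|\le C$ and Young.

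The gap is the step you flag yourself: you never close the cubic term $\int_0^s\!\int a^1_\eps a^2_\eps a^3_\eps$ in the $a^3_\eps$ estimate. Testing the $a^i_\eps$-equations with $(a^i_\eps)^3$ yields, besides the harmless signed term $-\int (a^i_\eps)^4 a^j_\eps$, the production term $\int_0^s\!\int a^3_\eps (a^i_\eps)^3$. That term involves the very unknown you are trying to bound, and your proposal says nothing about it. In the paper, the $\mathrm L^\infty(\mathrm L^4)$ bound (Proposition~\ref{L4 norm estimate}) is proved after this lemma. It controls that term through $\Vert a^3_\eps\Vert_{\mathrm L^{10/3}}$, which comes from the present lemma, so invoking it here would be circular.

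Your route can be rescued using only $\Vert a^3_\eps\Vert_{\mathrm L^2((0,T)\times\Omega_{r_\eps})}\le\kappa_1$. With $w=(a^i_\eps)^2$ one has $\int_0^T\!\int a^3_\eps(a^i_\eps)^3\le\kappa_1\Vert w\Vert_{\mathrm L^3}^{3/2}\le C\Vert w\Vert_{Q_{r_\eps}(T)}^{3/2}$. This uses Lemma~\ref{app: anisotropic sobolev ineq} with $r_1=q_1=10/3$, and since $3/2<2$, Young closes the estimate. However, that parabolic Sobolev step is exactly what your proposal never invokes. The duality fallback does not supply it either: an $\mathrm L^2$ bound on $a^1_\eps+a^3_\eps$ is just Lemma~\ref{L2 integral estimate} again and gives no extra integrability.

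The missing idea, which is the paper's route, is that no $\mathrm L^4$ estimate is needed at this stage. After your Step 2 you already know $a^1_\eps,a^2_\eps$ are bounded in $\mathrm L^\infty(\mathrm L^2)\cap \mathrm L^2(\mathrm H^1)$ uniformly in $\eps$. Lemma~\ref{app: anisotropic sobolev ineq} with $r_1=2$, $q_1=6$ (or simply $\mathrm H^1\hookrightarrow \mathrm L^6$ through $E_\eps$) then gives $\Vert a^i_\eps\Vert_{\mathrm L^2((0,T);\mathrm L^4(\Omega_{r_\eps}))}\le C$ for $i=1,2$. Hence
\[
\int_0^s\!\!\int_{\Omega_{r_\eps}} a^1_\eps a^2_\eps a^3_\eps \le \sup_{t\in[0,T]}\Vert a^3_\eps(t)\Vert_{\mathrm L^2}\int_0^T\Vert a^1_\eps\Vert_{\mathrm L^4}\Vert a^2_\eps\Vert_{\mathrm L^4}\,dt \le C\sup_{t\in[0,T]}\Vert a^3_\eps(t)\Vert_{\mathrm L^2},
\]
and after taking the supremum over $s$, Young absorbs this into the left-hand side. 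So the paper's order is the reverse of yours: the energy bound for $a^3_\eps$ comes first, and the $\mathrm L^4$ bounds for $a^1_\eps,a^2_\eps$ are derived from it afterwards.
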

\begin{proof}
    Equation satisfied by $a^1_{\eps}$ can be rewritten as follows
    \[
        \partial_t a^1_{\varepsilon} - d_2 \Delta a^1_{\varepsilon} + a^1_{\varepsilon}a^2_{\varepsilon} = a^3_{\varepsilon}. 
    \]
    Since $a^i_{\eps}$ is nonnegative (see Theorem \ref{existence, uniqueness,regularity, epsilon scale}) for all $1\leq i\leq 3$. Testing with $a^1_{\eps}$ yields
    \[
    \int\limits_{\Omega_{r_{\eps}}} a^1_{\eps} \partial_t a^1_{\eps} - d_1 \int\limits_{\Omega_{r_{\eps}}} a^1_{\eps} \Delta a^1_{\eps} \leq \int\limits_{\Omega_{r_{\eps}}} a^1_{\eps} a^3_{\eps}. 
    \]
    Integrating the above inequality with respect to the time variable from $0\leq s \leq T$ we obtain the following 
    \begin{align*}
    \int\limits_0^s\int\limits_{\Omega_{r_{\eps}}} a^1_{\eps} \partial_t a^1_{\eps} + d_1 \int\limits_0^s\int\limits_{\Omega_{r_{\eps}}} |\nabla a^1_{\eps}|^2 - d_1 \int\limits_0^s\int\limits_{\Gamma_{\eps}} a^1_{\eps} \frac{\partial a^1_{\eps} }{\partial n_{\eps}} 
    & \leq \int\limits_0^s\int\limits_{\Omega_{r_{\eps}}} a^1_{\eps} a^3_{\eps}. 
    \end{align*}
    Using the boundary condition from \eqref{eq: aeps} we can write it as 
    \begin{align*}
    \int\limits_0^s\int\limits_{\Omega_{r_{\eps}}} \partial_t \big( a^1_{\eps} \big)^2
    + 2d_1 \int\limits_0^s\int\limits_{\Omega_{r_{\eps}}} |\nabla a^1_{\eps}|^2 
    - &2 d_1 \eps \int\limits_0^s\int\limits_{\Gamma_{\eps}} a^1_{\eps} \psi_1\left(t,x,\frac{x}{\eps} \right)
     \leq 2\int\limits_0^s\int\limits_{\Omega_{r_{\eps}}} a^1_{\eps} a^3_{\eps} 
    \\
    & \leq 2\Vert a^1_{\eps} \Vert_{\mathrm L^2((0,T);\mathrm L^2(\Omega_{r_{\eps}}))} \Vert a^3_{\eps} \Vert_{\mathrm L^2((0,T);\mathrm L^2(\Omega_{r_{\eps}}))} 
    \\
    & \leq 2\kappa_1. 
    \end{align*}
    where we have used H\"older for the second inequality and \eqref{uniform: L2L2} for the third inequality. Integration with respect to time variable on the first integral yields 
    \begin{equation}\label{ineq: grad proof}
    \int\limits_{\Omega_{r_{\eps}}} \big( a^1_{\eps}(s,\cdot) \big)^2
    + 2d_1 \int\limits_0^s\int\limits_{\Omega_{r_{\eps}}} |\nabla a^1_{\eps}|^2 
    - 2 d_1 \eps \int\limits_0^s\int\limits_{\Gamma_{\eps}} a^1_{\eps} \psi_1\left(t,x,\frac{x}{\eps} \right)
    \leq 2\kappa_1 
    + \int\limits_{\Omega_{r_{\eps}}} \big( a^1_{\eps,0} \big)^2 =: C,
    \end{equation}
    where we have used the niform bound of the initial condition \eqref{CI}. Consider the boundary integral
    \begin{align*}
        2d_1 \varepsilon \int_0^{s} \int_{\Gamma_\varepsilon} \psi_1(t, x, \tfrac{x}{\varepsilon}) a_1^\varepsilon \, ds(x) dt 
        &\le 2d_1 \varepsilon \|\psi_1\|_{B} \int_0^{s} \int_{\Gamma_\varepsilon} |a_1^\varepsilon| \, ds(x) dt \\
        &\leq 2d_1 \varepsilon \|\psi_1\|_{B} \sqrt{s|\Gamma_\varepsilon|} \|a_1^\varepsilon\|_{L^2((0, s); \mathrm L^2(\Gamma_\varepsilon))}.
    \end{align*}
Employing Lemma \ref{app: trace theory}, we obtain
     \begin{equation}\label{ineq: grad prooof bdry}
     \begin{aligned}
        2d_1 \varepsilon \int_0^{s} \int_{\Gamma_\varepsilon} \psi_1(t, x, \tfrac{x}{\varepsilon}) a_1^\varepsilon \, ds(x) dt   &\le 2C_{tr}d_1 \sqrt{T} \|\psi_1\|_{B} \sqrt{\varepsilon |\Gamma_\varepsilon|} \|a_1^\varepsilon\|_{L^2((0, s); \mathrm{ H}^1(\Omega_{r_{\eps}}))}  
        \\
        &\le 2\gamma C_{tr}d_1 \sqrt{T} \cdot \|\psi_1\|_{B} \|a_1^\varepsilon\|_{L^2((0, s); \mathrm{ H}^1(\Omega_{r_{\eps}}))} 
        \\
        & = \widetilde{C} \left[ \|a_1^\varepsilon\|_{L^2((0, s); L^2(\Omega_{r_{\eps}}))} + \|\nabla a_1^\varepsilon\|_{L^2((0, s); L^2(\Omega_{r_{\eps}}))} \right],
        \end{aligned}
        \end{equation}
where $\widetilde{C}:=2\gamma C_{tr}d_1 \sqrt{T} \cdot \|\psi_1\|_{B}$ and $\gamma$ as defined in \eqref{cond: boundary Gammaeps}. From \eqref{ineq: grad proof} and \eqref{ineq: grad prooof bdry} we obtain the following 
\begin{equation*}
    \begin{aligned}
    \int\limits_{\Omega_{r_{\eps}}} \big( a^1_{\eps}(s,\cdot) \big)^2
    + 2d_1 \int\limits_0^s\int\limits_{\Omega_{r_{\eps}}} |\nabla a^1_{\eps}|^2 
    & \leq C 
    + \Tilde{C} \|a_1^\varepsilon\|_{L^2((0, s); L^2(\Omega_{r_{\eps}}))} 
    + \Tilde{C} \|\nabla a_1^\varepsilon\|_{L^2((0, s); L^2(\Omega_{r_{\eps}}))}
    \\
    & \leq C 
    + \Tilde{C}\|a_1^\varepsilon\|_{L^2((0, T); L^2(\Omega_{r_{\eps}}))} 
    + \frac{\Tilde{C}^2}{4d_1}
    + d_1 \| \nabla a_1^\varepsilon\|_{L^2((0, s); L^2(\Omega_{r_{\eps}}))}^2
    \\
    & \leq C + \Tilde{C} \kappa_1+ \frac{\Tilde{C}^2}{4d_1}+ d_1 \| \nabla a_1^\varepsilon\|_{L^2((0, s); L^2(\Omega_{r_{\eps}}))}^2,
    \end{aligned}
\end{equation*}
where we have used Young's inequality and \eqref{uniform: L2L2} as described in Lemma \ref{L2 integral estimate}. Choice of the constant as $C_1:=C + \Tilde{C} \kappa_1+ \frac{\Tilde{C}^2}{4d_1}$, yields
\begin{equation}
    \int\limits_{\Omega_{r_{\eps}}} \big( a^1_{\eps}(s,\cdot) \big)^2
    + d_1 \int\limits_0^s\int\limits_{\Omega_{r_{\eps}}} |\nabla a^1_{\eps}|^2 
    \leq C_1.
\end{equation}
Since above inequality holds true for $0\leq s \leq T$ we obtain the following 
\begin{equation}
  \| a^1_{\eps} \|^2_{\mathrm L^{\infty}((0,T);\mathrm L^2(\Omega_{r_{\eps}}))} 
  + d_1 \| \nabla a^1_{\eps} \|^2_{\mathrm L^{2}((0,T);\mathrm L^2(\Omega_{r_{\eps}}))} 
  \leq C_1.
\end{equation}
Similar calculations can be performed for the $a^2_{\eps}$. Next we move towards the estimate corresponding to $a^3_{\eps}$. Testing equation \eqref{eq: ceps} with $a^3_{\eps}$ gives us
\[
        \int\limits_{0}^s \int\limits_{\Omega_{r_{\eps}}} a^3_{\eps} \partial_t a^3_{\eps} 
        - d_3 \int\limits_{0}^s \int\limits_{\Omega_{r_{\eps}}}a^3_{\eps} \Delta a^3_{\eps} 
        + \int\limits_{0}^s \int\limits_{\Omega_{r_{\eps}}} \big(a^3_{\eps} \big)^2
        = \int\limits_{0}^s \int\limits_{\Omega_{r_{\eps}}} a^1_{\eps}a^2_{\eps}a^3_{\eps}.
 \]
for some $0\leq s \leq T$. Integration by parts yields
\begin{align}
        \int\limits_{\Omega_{r_{\eps}}} \left| a^3_{\eps}(s,\cdot) \right|^2
        -& \int\limits_{\Omega_{r_{\eps}}} \left| a^3_{\eps}(0,\cdot) \right|^2
        + 2d_3 \int\limits_{0}^s \int\limits_{\Omega_{r_{\eps}}} \left| \nabla a^3_{\eps} \right|^2 \nonumber \\
         & \leq 2d_3 \eps \int\limits_{0}^s \int\limits_{\Omega_{r_{\eps}}} \psi_3\left(t,x,\frac{x}{\eps} \right) a^3_{\eps}
        + \int\limits_{0}^s \int\limits_{\Omega_{r_{\eps}}} a^1_{\eps}a^2_{\eps}a^3_{\eps} =:J_1 + J_2. \label{ineq: grad3 uniform}
\end{align}
The first term $J_1$ can be handled in a similar fashion as done in \eqref{ineq: grad prooof bdry} and hence there exists positive constant $C_2$, independent of $\eps$, such that 
\begin{equation}\label{ineq: J1}
J_1 \leq C_2 + C_2\| \nabla a^3_{\eps} \|_{\mathrm L^{2}((0,s);\mathrm L^2(\Omega_{r_{\eps}}))}   .  
\end{equation}
Let us now look at $J_2$
\begin{equation}\label{exp: J_2}
\begin{aligned}
J_2 & =\int_0^{s} \int_{\Omega_{r_{\eps}}} a^1_{\eps} a^2_{\eps} a_3^\varepsilon \le \int_0^{s} \left[ \int_{\Omega_{r_{\eps}}} \big(a^1_{\eps} a^2_{\eps}\big)^2 \right]^{1/2} \left[ \int_{\Omega_{r_{\eps}}} \big(a^3_{\eps}\big)^2 \right]^{1/2} 
\\
&\le \sup_{s \in [0, T]} \left[ \int_{\Omega_{r_{\eps}}} \big(a^3_{\eps}(s,\cdot)\big)^2 \right]^{1/2}  \int_0^T \left[ \int_{\Omega_{r_{\eps}}} \big(a^1_{\eps}\big)^2 \big(a^2_{\eps}\big)^2 \right]^{1/2} 
\\
&\le \frac{1}{\sqrt{2}} \sup_{s \in [0, T]} \left( \int_{\Omega_{r_{\eps}}} \big(a^3_{\eps}(s,\cdot)\big)^2 \right)^{1/2} \int_0^T \left[ \int_{\Omega_{r_{\eps}}} \big(a^1_{\eps}\big)^4 + \int_{\Omega_{r_{\eps}}} (a^2_{\eps})^4 \right]^{1/2}
\\
&\le \frac{1}{\sqrt{2}}  \left( \sup_{s \in [0, T]} \int_{\Omega_{r_{\eps}}} \big(a^3_{\eps}(s,\cdot)\big)^2 \right)^{1/2}  \left[ \int_0^T \left( \int_{\Omega_{r_{\eps}}} (a^1_{\eps})^4 \right)^{1/2} + \int_0^T \left( \int_{\Omega_{r_{\eps}}} (a^2_{\eps})^4 \right)^{1/2} \right].
\end{aligned}
\end{equation}
Invoking Anisotropic Sobolev inequality as described in Lemma \ref{app: anisotropic sobolev ineq} for $r_1 = 2$ and $q_1 = 6$ we obtain 
\[
\left(\int\limits_{0}^T \| a^i_{\eps} \|^2_{\mathrm L^6(\Omega_{r_{\eps}})} \right)^{1/2} \leq C_{AS} \left[ \| a^i_{\eps} \|_{\mathrm L^{\infty}((0,T);\mathrm L^2(\Omega_{r_{\eps}}))} 
  + \| \nabla a^i_{\eps} \|_{\mathrm L^{2}((0,T);\mathrm L^2(\Omega_{r_{\eps}}))}  \right] 
  \qquad \text{ for }i=1,2.
\]
Using H\"older and estimate \eqref{uniform: LinftyL2 and grad L2L2 } for $i=1,2$,  we obtain 
\[
\| a^i_{\eps} \|_{\mathrm L^{2}((0,T);\mathrm L^4(\Omega_{r_{\eps}}))} \leq C_3 \qquad \text{ for }i=1,2,
\]
where $C_3$ is some constant independent of $\eps$. Hence using this inequality along with the estimate \eqref{exp: J_2} we obtain the following 
\begin{equation}\label{ineq: J2}
    J_2 \leq \sqrt{2}C_3 \left( \sup_{0\leq s \leq T} \int_{\Omega_{r_{\eps}}} \big(a^3_{\eps}(s,\cdot)\big)^2 \right)^{1/2}.
\end{equation}
The estimates \eqref{ineq: grad3 uniform}, \eqref{ineq: J1}, \eqref{ineq: J2} yields the following 
\begin{equation*}
    \begin{aligned}
        \int\limits_{\Omega_{r_{\eps}}} \left| a^3_{\eps}(s,\cdot) \right|^2
        - \int\limits_{\Omega_{r_{\eps}}} &\left| a^3_{\eps}(0,\cdot) \right|^2
        + 2d_3 \int\limits_{0}^s \int\limits_{\Omega_{r_{\eps}}} \left| \nabla a^3_{\eps} \right|^2
        \\
        & \leq C_2 
        + \sqrt{2}C_3 \left( \sup_{0\leq s \leq T} \int_{\Omega_{r_{\eps}}} \big(a^3_{\eps}(s,\cdot)\big)^2 \right)^{1/2} 
        + C_2 \| \nabla a^3_{\eps} \|_{\mathrm L^{2}((0,s);\mathrm L^2(\Omega_{r_{\eps}}))}. 
    \end{aligned}
\end{equation*}
Using Young's inequality we can further reduce the above inequality as follows 
\begin{equation*}
    \begin{aligned}
        \int\limits_{\Omega_{r_{\eps}}} \left| a^3_{\eps}(s,\cdot) \right|^2
        + d_3 \int\limits_{0}^s \int\limits_{\Omega_{r_{\eps}}} \left| \nabla a^3_{\eps} \right|^2
        & \leq C_4 
        + \frac{1}{2}  \sup_{0\leq s \leq T} \int_{\Omega_{r_{\eps}}} \big(a^3_{\eps}(s,\cdot)\big)^2, 
    \end{aligned}
\end{equation*}
where $C_4>0$ some constant independent of $\eps$. Since above inequality is true for any $0 \leq s \leq T$, taking supremum over $s$ gives us the desired result.
\end{proof}

Next, we move onto the uniform (in $\eps$) $\mathrm{L}^2$-integral estimate of the time derivative of the solution. We start with the following Proposition. 
\begin{prop}\label{L4 norm estimate}
    Let $a^1_{\eps}, a^2_{\eps}$ and $a^3_{\eps}$ be the solution to the system \eqref{eq: aeps}-\eqref{eq: ceps}. Then we have 
    \begin{equation}\label{uniform: linfty l4}
        \Vert a^i_{\eps} \Vert_{\mathrm L^{\infty}\big((0,T);\mathrm L^4(\Omega_{r_{\eps}})\big)} \leq \kappa_3, \  \forall  i = 1, 2.
    \end{equation}
    where $\kappa_3$ is independent of $\eps$.
\end{prop}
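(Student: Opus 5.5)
We test the equation for $a^1_{\eps}$ with $(a^1_{\eps})^3$, which is admissible since $a^1_{\eps}\ge 0$ and is regular by Theorem \ref{existence, uniqueness,regularity, epsilon scale}. The nonlinear term helps: writing the equation as $\dert a^1_{\eps} - d_1\Delta a^1_{\eps} + a^1_{\eps}a^2_{\eps} = a^3_{\eps}$, the term $a^1_{\eps}a^2_{\eps}(a^1_{\eps})^3$ is nonnegative and can be dropped. Integrating by parts on $(0,s)\times\Omega_{r_\eps}$ gives
\begin{equation*}
\frac14\int_{\Omega_{r_\eps}}(a^1_{\eps}(s))^4 + 3d_1\int_0^s\int_{\Omega_{r_\eps}}(a^1_{\eps})^2|\nabla a^1_{\eps}|^2 \le \frac14\int_{\Omega_{r_\eps}}(a^1_{\eps,0})^4 + d_1\eps\int_0^s\int_{\Gamma_\eps}\psi_1 (a^1_{\eps})^3 + \int_0^s\int_{\Omega_{r_\eps}} a^3_{\eps}(a^1_{\eps})^3 .
\end{equation*}
The initial term is bounded by \eqref{CI}.

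Set $w=(a^1_{\eps})^2$, so that $(a^1_{\eps})^2|\nabla a^1_{\eps}|^2=\frac14|\nabla w|^2$. For the boundary term, $\psi_1$ is bounded in $B$, so it suffices to control $\eps\int_{\Gamma_\eps}(a^1_{\eps})^3\le \eps\|\psi_1\|_B\int_{\Gamma_\eps}|w|^{3/2}$. I would apply the trace inequality of Lemma \ref{app: trace theory} to $w^{3/4}$, or simply estimate
\begin{equation*}
\eps\int_{\Gamma_\eps}(a^1_{\eps})^3\le C\Big(\int_{\Omega_{r_\eps}}(a^1_{\eps})^3+\int_{\Omega_{r_\eps}}(a^1_{\eps})^2|\nabla a^1_{\eps}|\Big),
\end{equation*}
which is the $\mathrm W^{1,1}$ trace inequality applied to $(a^1_\eps)^3$, scaled by $\eps$. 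The gradient piece is absorbed by Young, leaving $\delta\int (a^1_\eps)^2|\nabla a^1_\eps|^2 + C\int (a^1_\eps)^2$. The $\mathrm L^2$ piece is bounded by Lemma \ref{anisotropic norm estimate}. The term $\int (a^1_{\eps})^3$ is handled as $\int w^{3/2}\le \|w\|_{\mathrm L^1}^{1/2}\|w\|_{\mathrm L^2}$, again with $\|w\|_{\mathrm L^1}=\|a^1_\eps\|_{\mathrm L^2}^2$ bounded.

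The main obstacle is the reaction term $\int_0^s\int a^3_{\eps}(a^1_{\eps})^3$, since $a^3_\eps$ is only known in $\mathrm L^\infty\mathrm L^2\cap \mathrm L^2\mathrm H^1$. I would use H\"older in space,
\begin{equation*}
\int a^3_\eps (a^1_\eps)^3 \le \|a^3_\eps\|_{\mathrm L^6}\,\|w\|_{\mathrm L^{9/5}}^{3/2}.
\end{equation*}
Then I would interpolate $\|w\|_{\mathrm L^{9/5}}$ between $\|w\|_{\mathrm L^1}$ (bounded) and $\|w\|_{\mathrm L^6}\lesssim \|w\|_{\mathrm H^1}$, using the Sobolev embedding on the perforated domain via the extension operator of Appendix \ref{sec: appendix extension operator}. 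The interpolation exponent gives $\|w\|_{\mathrm L^{9/5}}\le C\|w\|_{\mathrm L^1}^{1-\lambda}\|w\|_{\mathrm H^1}^{\lambda}$ with $\lambda=\tfrac{4}{9}\cdot\tfrac{6}{5}=\tfrac{8}{15}$. The term is therefore bounded by $C\|a^3_\eps\|_{\mathrm H^1}\|w\|_{\mathrm H^1}^{4/5}$, and Young with exponents $(2,2)$ and $\|w\|_{\mathrm H^1}^{8/5}\le \delta\|w\|^2_{\mathrm H^1}+C$ gives
\begin{equation*}
\delta\|\nabla w\|_{\mathrm L^2}^2+\delta\|w\|_{\mathrm L^2}^2 + C\|a^3_\eps\|^2_{\mathrm H^1}+C .
\end{equation*}
Here $\|a^3_\eps\|^2_{\mathrm H^1}$ is integrable in time by Lemma \ref{anisotropic norm estimate}, and $\|w\|^2_{\mathrm L^2}=\int (a^1_\eps)^4$ is the quantity being controlled. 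If this exponent count turns out too tight, I would instead split $a^3_\eps(a^1_\eps)^3\le \frac14 (a^3_\eps)^4+\frac34(a^1_\eps)^4$. That alternative requires an a priori $\mathrm L^4\mathrm L^4$ bound on $a^3_\eps$, which the anisotropic Sobolev inequality gives only up to $\mathrm L^{10/3}$, so the first route is preferred.

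Collecting terms and absorbing the $\delta$-gradient contributions into the left-hand side leaves
\begin{equation*}
\int (a^1_\eps(s))^4 \le C + C\int_0^s\int (a^1_\eps)^4 .
\end{equation*}
Gronwall's lemma then yields the bound uniformly in $s\in[0,T]$ and in $\eps$. The same argument applies verbatim to $a^2_{\eps}$, which gives \eqref{uniform: linfty l4}.
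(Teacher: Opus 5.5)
Your proof is correct. It closes the estimate by a genuinely different route from the paper's.

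\textbf{The paper's route.} The paper also tests with (essentially) $(\aeps)^3$, but works in space--time norms and avoids Gronwall:
\begin{itemize}
\item It integrates the energy inequality once more in time, so that the full norm $\|(\aeps)^2\|^2_{\mathrm L^2((0,T);\mathrm H^1)}$ sits on the left-hand side.
\item It treats the boundary term with the $\mathrm L^2$ trace inequality of Lemma \ref{app: trace theory} applied to $w=(\aeps)^2$, combined with H\"older on $\Gamma_\eps$ and the boundedness of $\eps|\Gamma_\eps|$.
\item It bounds the reaction term by space--time H\"older, $\|\ceps\|_{\mathrm L^{10/3}}\|w\|^{3/2}_{\mathrm L^{15/7}}$, using the anisotropic Sobolev inequality of Lemma \ref{app: anisotropic sobolev ineq} for both $\ceps$ and $w$. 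Since $\|w\|^2_{\mathrm L^{10/3}}$ is controlled by the left-hand side and enters only to the power $3/4$, Young closes the estimate directly.
\end{itemize}

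\textbf{Your route.} You work pointwise in time:
\begin{itemize}
\item You use $\mathrm L^6$--$\mathrm L^{6/5}$ H\"older, then the interpolation $\|w\|_{\mathrm L^{9/5}}\le\|w\|_{\mathrm L^1}^{7/15}\|w\|_{\mathrm L^6}^{8/15}$.
\item You bound $\|w\|_{\mathrm L^1}=\|\aeps\|^2_{\mathrm L^2}$ by Lemma \ref{anisotropic norm estimate}, and use the $\eps$-uniform Sobolev embedding obtained via the extension operator.
\item The resulting power $\|w\|_{\mathrm H^1}^{8/5}$ is strictly subquadratic, so your concern that the exponent count might be too tight is unfounded.
\item The leftover $\int_0^s\int(\aeps)^4$ from the boundary and reaction terms is then handled by Gronwall.
\end{itemize}

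\textbf{Trade-offs.} Your argument is more elementary and transparent: it needs only the stationary Sobolev embedding and the already-proved $\mathrm L^\infty\mathrm L^2$ bound, not the parabolic anisotropic embedding. The price is a constant exponential in $T$, plus a $\mathrm W^{1,1}$ trace inequality that the paper does not state. That inequality is standard by the same cell-scaling argument. Your alternative also works: apply Lemma \ref{app: trace theory} to $(\aeps)^{3/2}$. The resulting term $\eps^2\int\aeps|\nabla\aeps|^2$ splits by Cauchy--Schwarz into $\delta\int(\aeps)^2|\nabla\aeps|^2$ plus a multiple of $\int|\nabla\aeps|^2$, and the latter is bounded by Lemma \ref{anisotropic norm estimate}.
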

\begin{proof}
    Multiplying equation \eqref{eq: aeps} by $2a^1_{\eps}$ yields
    \[
    2\aeps \dert \aeps - 2d_1 \aeps \Delta \aeps = 2 \aeps \ceps - 2 \big( \aeps \big)^2 \beps,
    \]
    which can further be rewritten as 
    \begin{align}\label{L4-1st}
    \dert \big( \aeps \big)^2 
    - d_1\big[\Delta \big( \aeps \big)^2 - 2|\nabla \aeps |^2 \big] 
    + 2 \big( \aeps \big)^2 \beps
    = 2 \aeps \ceps,
    \end{align}
    where we have used the following identity 
    \[
    \Delta u^2 = 2u\Delta u + 2|\nabla u|^2.
    \]
    Testing equation \eqref{L4-1st} with $\big( \aeps \big)^2$ on $(0,s) \times \Omega_{r_{\eps}}$ yields 
    \begin{align*}
    \int_0^s \int_{\Omega_{r_{\eps}}} \big(\aeps\big)^2 \partial_t \big(\aeps\big)^2 
    - & d_1 \int_0^s \int_{\Omega_{r_{\eps}}} \big(\aeps\big)^2 \Delta \big(\aeps\big)^2 
    \\
    +& 2d_1 \int_0^s \int_{\Omega_{r_{\eps}}} \big(\aeps\big)^2|\nabla \aeps|^2 
    + 2 \int_0^s \int_{\Omega_{r_{\eps}}} \big(\aeps\big)^4 \beps 
    = 2 \int_0^s \int_{\Omega_{r_{\eps}}} \big(\aeps\big)^3 \ceps.
    \end{align*}
    Since $a_{\eps}^2$ is nonnegative (see Theorem \ref{existence, uniqueness,regularity, epsilon scale}), the third and fourth integrals are non-negative. Hence we obtain the following
    \begin{align*}
     \int_0^s \int_{\Omega_{r_{\eps}}} \big( \aeps \big)^2 \partial_t \big( \aeps \big)^2 
     - d_1 \int_0^s \int_{\Omega_{r_{\eps}}} \big( \aeps \big)^2 \Delta \big( \aeps \big)^2 \le 2 \int_0^s \int_{\Omega_{r_{\eps}}} \ceps \big( \aeps \big)^2.
    \end{align*}
    Integration by parts yields 
    \begin{align*}
    \frac{1}{2} \int_0^s \int_{\Omega_{r_{\eps}}} \partial_t \big( \aeps \big)^4 + d_1 \int_0^s \int_{\Omega_{r_{\eps}}} \nabla (a_1^\eps)^2 \cdot \nabla (a_1^\eps)^2 - d_1 \int_0^s \int_{\partial \Omega_{r_{\eps}}} \big( \aeps \big)^2 \frac{\partial (a_1^\eps)^2}{\partial \eta} 
    \le 2 \int_0^s \int_{\Omega_{r_{\eps}}} \ceps \big( \aeps \big)^3.
    \end{align*}
    The above relation further simplifies to
    \begin{align*}
    \int_{\Omega_{r_{\eps}}} \big( \aeps \big)^4(s,\cdot) 
    + &2d_1 \int_0^s \int_{\Omega_{r_{\eps}}} \left|\nabla \big( \aeps \big)^2\right|^2 
    \\
    &\le \int_{\Omega_{r_{\eps}}} \big( a^1_{\eps,0} \big)^4 
    + 4 \int_0^s \int_{\Omega_{r_{\eps}}} \ceps \big( \aeps \big)^3
    + 2d_1 \int_0^s \int_{\partial\Omega_{r_{\eps}}} \big( \aeps \big)^2 \frac{\partial \big( \aeps \big)^2}{\partial \eta_\eps}.
\end{align*}
Recall the following identity 
\[
\frac{\partial \big( \aeps \big)^2}{\partial \eta_\eps} = 2 \aeps \frac{\partial \aeps}{\partial \eta_\eps}.
\]
Using the boundary condition from \eqref{eq: aeps} we obtain 
\begin{equation}\label{ineq: l4 deciding ineq}
    \begin{aligned}
    \int_{\Omega_{r_{\eps}}} \big( \aeps \big)^4(s,\cdot) 
    &+ 2d_1 \int_0^s \int_{\Omega_{r_{\eps}}} \left|\nabla \big( \aeps \big)^2\right|^2 \\
    & \le \int_{\Omega_{r_{\eps}}} \big( a^1_{\eps,0} \big)^4 
    + 4 \int_0^s \int_{\Omega_{r_{\eps}}} \ceps \big( \aeps \big)^3+ 4d_1 \eps \int_0^s \int_{\Gamma_\eps} \big( \aeps \big)^3 \psi_1\left(t,x,\frac{x}{\eps} \right).
\end{aligned}
\end{equation}
Dropping the second term in the above estimate yields: for any $0 \leq t \leq s \leq T$ 
\[
    \int_{\Omega_{r_{\eps}}} \big( \aeps \big)^4(t,\cdot) \leq \int_{\Omega_{r_{\eps}}} \big( a^1_{\eps,0} \big)^4 
    + 4 \int_0^s \int_{\Omega_{r_{\eps}}} \ceps \big( \aeps \big)^3
    + 4d_1 \eps \int_0^s \int_{\Gamma_\eps} \big( \aeps \big)^3 \psi_1\left(t,x,\frac{x}{\eps} \right).
\]
Integrating with respect to time $t$ from $0$ to $s$ we obtain 
\[
    \int_0^s\int_{\Omega_{r_{\eps}}} \big( \aeps \big)^4(t,\cdot) 
    \leq s \int_{\Omega_{r_{\eps}}} \big( a^1_{\eps,0} \big)^4 
    + 4s \int_0^s \int_{\Omega_{r_{\eps}}} \ceps \big( \aeps \big)^3
    + 4sd_1 \eps \int_0^s \int_{\Gamma_\eps} \big( \aeps \big)^3 \psi_1\left(t,x,\frac{x}{\eps} \right).
\]
Combining the above inequality with $\eqref{ineq: l4 deciding ineq}$ we obtain: for a.e. $ 0 \le t \le s \le T$
\begin{equation}\label{ineq: l4 updated deciding ineq}
\begin{aligned}
    \sup_{0 \le t \le s} \int_{\Omega_{r_{\eps}}} \big( \aeps \big)^4(t,\cdot) 
    +& \min\{1,2d_1\} \left \Vert \big( \aeps \big)^2 \right \Vert_{\mathrm L^2((0,T);\mathrm H^1(\Omega_{r_{\eps}}))}^2 
    \\
     = \sup_{0 \le t \le s} \int_{\Omega_{r_{\eps}}} \big( \aeps \big)^4(t,\cdot) 
    +& \min\{1,2d_1\} \left[ \int_0^s\int_{\Omega_{r_{\eps}}} \big( \aeps \big)^4 
    + \int_0^s \int_{\Omega_{r_{\eps}}} \left|\nabla \big( \aeps \big)^2\right|^2 \right] 
    \\
     \le (2+s)\int_{\Omega_{r_{\eps}}} \big( a^1_{\eps,0} \big)^4 
    + &4(2+s) \int_0^s \int_{\Omega_{r_{\eps}}} \ceps \big( \aeps \big)^3
  \\
    +& 4d_1(2+s) \eps \int_0^s \int_{\Gamma_\eps} \big( \aeps \big)^3 \psi_1\left(t,x,\frac{x}{\eps} \right). 
\end{aligned}
\end{equation}
We estimate the boundary integral in the following way:
    \begin{align*}
        \eps \int_0^s \int_{\Gamma_\eps} \big( \aeps \big)^3 \psi_1\left(t,x,\frac{x}{\eps} \right)
        & \leq \eps \Vert \psi_1 \Vert_{B} \int_0^s \int_{\Gamma_\eps} \left[\big( \aeps \big)^2 \right]^{3/2}
        \\
        & \leq \eps \Vert \psi_1 \Vert_{B} \big[T|\Gamma_{\eps}|\big]^{1/4} \left \Vert \big( \aeps \big)^2 \right \Vert_{\mathrm L^2((0,s);\mathrm L^2(\Gamma_{\eps}))}^{3/2}
        \\
        & \leq C_{tr}^{\frac 32}T^{1/4} \Vert \psi_1 \Vert_{B} \big[\eps|\Gamma_{\eps}|\big]^{1/4} \left \Vert \big( \aeps \big)^2 \right \Vert_{\mathrm L^2((0,s);\mathrm H^1(\Omega_{r_{\eps}}))}^{3/2}
        \\
        & \leq C_5 \left \Vert \big( \aeps \big)^2 \right \Vert_{\mathrm L^2((0,T);\mathrm H^1(\Omega_{r_{\eps}}))}^{3/2},
    \end{align*}
    where we use the trace estimate (see Lemma \ref{app: trace theory}) for the third step and $C_5:=C_{tr}^{\frac 32}T^{1/4} \Vert \psi_1 \Vert_{B} \gamma^{1/4}$, where $\gamma$ is as defined in \eqref{cond: boundary Gammaeps}. Employing Young's inequality we obtain  
\begin{equation}\label{ineq: trace linfinty l4}
    \begin{aligned}
         \eps \int_0^s \int_{\Gamma_\eps} \big( \aeps \big)^3 \psi_1\left(t,x,\frac{x}{\eps} \right)  \leq C_6 + \frac{\min\{1,2d_1\}}{8d_1(2+T)}\left \Vert \big( \aeps \big)^2 \right \Vert_{\mathrm L^2(0,T;\mathrm H^1(\Omega_{r_{\eps}}))}^{2},
 \end{aligned}
\end{equation}
where $C_6$ is a positive constant independent of $\eps$. From \eqref{ineq: l4 updated deciding ineq} and \eqref{ineq: trace linfinty l4} we obtain the following 
\begin{equation}\label{ineq: l4 ultimate updated deciding ineq}
    \begin{aligned}
    \sup_{0 \le t \le T} \int_{\Omega_{r_{\eps}}} \big( \aeps \big)^4(t,\cdot) 
    + &\frac{ \min\{1,2d_1\} }{2} \left \Vert \big( \aeps \big)^2 \right \Vert_{\mathrm L^2((0,T);\mathrm H^1(\Omega_{r_{\eps}}))}^2
    \\
    & \le (2+T)\int_{\Omega_{r_{\eps}}} \big( a^1_{\eps,0} \big)^4 
    + 4(2+T) \int_0^T \int_{\Omega_{r_{\eps}}} \ceps \big( \aeps \big)^3
    + C_6
    \\
    & \le C_6+4(2+T)(C^I)^4 + 4(2+T) \int_0^T \int_{\Omega_{r_{\eps}}} \ceps \big( \aeps \big)^3,
\end{aligned}
\end{equation}
where we have used the assumption \eqref{CI}. Employing H\"older's inequality we obtain
\begin{align}\label{L4 holder}
    \int_0^T \int_{\Omega_{r_{\eps}}} \ceps \big( \aeps \big)^3
     \leq \left \Vert \ceps \right \Vert_{\mathrm L^{10/3}((0,T);\mathrm L^{10/3}(\Omega_{r_{\eps}}))} \left \Vert \big( \aeps \big)^2 \right \Vert_{\mathrm L^{15/7}((0,T);\mathrm L^{15/7}(\Omega_{r_{\eps}}))}^{3/2}.
\end{align}
Invoking Anisotropic Sobolev inequality as described in Lemma \ref{app: anisotropic sobolev ineq} for $q=r=\frac{10}3$ yields
\begin{align*}
    \left \Vert a^i_{\eps} \right \Vert_{\mathrm L^{10/3}((0,T);\mathrm L^{10/3}(\Omega_{r_{\eps}}))} \leq  C_{AS} \left( \Vert a^i_{\eps} \Vert_{\mathrm L^{\infty}((0,T);\mathrm L^2(\Omega_{r_{\eps}}))} + \Vert \nabla a^i_{\eps} \Vert_{\mathrm L^2((0,T);\mathrm L^2(\Omega_{r_{\eps}}))} \right)^{\frac 12}, \ \forall\, i=1,2,3. 
\end{align*}
Thanks to Lemma \ref{anisotropic norm estimate}, we have that
\begin{align}\label{anisotropic L4}
    \left \Vert a^i_{\eps} \right \Vert_{\mathrm L^{10/3}((0,T);\mathrm L^{10/3}(\Omega_{r_{\eps}}))} \leq C_{AS}\kappa_2^{\frac 12}, \ \forall\, i=1,2,3. 
\end{align}
Using the above estimate \eqref{L4 holder}, it follows
\begin{equation*}
    \begin{aligned}
       \int_0^T \int_{\Omega_{r_{\eps}}} \ceps \big( \aeps \big)^3
        \leq C_{AS}\kappa_2^{\frac 12} \left \Vert \big( \aeps \big)^2 \right \Vert_{\mathrm L^{15/7}((0,T);\mathrm L^{15/7}(\Omega_{r_{\eps}}))}^{3/2} \leq C_7 \left \Vert (\aeps)^2 \right \Vert_{\mathrm L^{10/3}((0,T);\mathrm L^{10/3}(\Omega_{r_{\eps}}))}^{3/2},
    \end{aligned}
\end{equation*}
where we have used the H\"older inequality for the last step and $C_7$ is some positive constant, independent of $\eps$.  Hence from \eqref{ineq: l4 ultimate updated deciding ineq} we conclude that
\begin{equation}\label{ineq: l4 juicy}
\begin{aligned}
    \sup_{0 \le t \le T} \int_{\Omega_{r_{\eps}}} \big( \aeps \big)^4(t,\cdot) 
    + \frac{ \min\{1,2d_1\} }{2} &\left \Vert \big( \aeps \big)^2 \right \Vert_{\mathrm L^2((0,T);\mathrm H^1(\Omega_{r_{\eps}}))} \\
    \leq &C_8 + 4C_7(2+T)\left \Vert (\aeps)^2 \right \Vert_{\mathrm L^{10/3}((0,T);\mathrm L^{10/3}(\Omega_{r_{\eps}}))}^{3/2},
    \end{aligned}
\end{equation}
where $C_8:=C_6+4(2+T)(C^I)^4$. Employing Anisotropic Sobolev embedding (see Lemma \ref{app: anisotropic sobolev ineq}) in \eqref{ineq: l4 juicy}, we obtain
\begin{equation*}
    \begin{aligned}
        \left \Vert \big( \aeps \big)^2 \right \Vert_{\mathrm L^{10/3}((0,T);\mathrm L^{10/3}(\Omega_{r_{\eps}}))}^2 
        & \leq C_{AS} \left(\sup_{0 \le t \le T} \int_{\Omega_{r_{\eps}}} \big( \aeps \big)^4(t,\cdot) 
    +\left \Vert \nabla \big(\aeps \big)^2 \right \Vert^2_{\mathrm L^2((0,T);\mathrm L^2(\Omega_{r_{\eps}}))} \right)
        \\
        & \leq C_{AS}C_8 + 4C_{AS}C_7(2+T)\left \Vert (\aeps)^2 \right \Vert_{\mathrm L^{10/3}((0,T);\mathrm L^{10/3}(\Omega_{r_{\eps}}))}^{3/2}
        \\
        & \leq C_9 + \frac{\left \Vert (\aeps)^2 \right \Vert_{\mathrm L^{10/3}((0,T);\mathrm L^{10/3}(\Omega_{r_{\eps}}))}^{2}}{2}\, ,
    \end{aligned}
\end{equation*}
where the second step comes from the estimate \eqref{ineq: l4 juicy} and the last step is due to Young's inequality, where $C_9$ is some positive constant independent of $\eps$. Hence we get the following 
\begin{equation}\label{ineq: l4 half juicy}
\left \Vert \big( \aeps \big)^2 \right \Vert_{\mathrm L^{10/3}((0,T);\mathrm L^{10/3}(\Omega_{r_{\eps}}))}  \leq (2C_9)^{\frac 12}.
\end{equation}
Combining \eqref{ineq: l4 half juicy} with \eqref{ineq: l4 juicy} we obtain the desired result for $i=1$.  Similar calculations hold true for $i=2$ as well.
\end{proof}
The above estimate helps us deducing the following proposition. 
\begin{prop}\label{L2 integral estimate time derivative}
     Let $a^1_{\eps}, \beps $ and $ \aeps $ be the solution to the system \eqref{eq: aeps}-\eqref{eq: ceps} respectively. Then the following holds: 
    \begin{equation}\label{uniform: time Der L2L2 and grad LinftyL2}
        \Vert \dert a^i_{\eps} \Vert_{\mathrm L^{2}\big((0,T);\mathrm L^2(\Omega_{r_{\eps}})\big)} 
        + \Vert \nabla a^i_{\eps} \Vert_{\mathrm L^{\infty}\big((0,T);\mathrm L^2(\Omega_{r_{\eps}})\big)} \leq \kappa_4 \qquad \text{ for } 1 \le i \le 3,
    \end{equation}
    where $\kappa_4$ is a positive constant independent of $\eps$.
\end{prop}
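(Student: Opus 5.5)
The plan is to test each equation with $\dert a^i_{\eps}$ on $(0,s)\times\Omega_{r_{\eps}}$, the standard energy estimate for parabolic problems. For $i=1$ this gives
\begin{align*}
\int_0^s\int_{\Omega_{r_{\eps}}} |\dert \aeps|^2 + \frac{d_1}{2}\int_{\Omega_{r_{\eps}}} |\nabla \aeps(s)|^2 = \frac{d_1}{2}\int_{\Omega_{r_{\eps}}} |\nabla a^1_{\eps,0}|^2 + d_1\eps\int_0^s\int_{\Gamma_\eps} \psi_1\Big(t,x,\frac{x}{\eps}\Big)\dert \aeps + \int_0^s\int_{\Omega_{r_{\eps}}} (\ceps-\aeps\beps)\dert\aeps .
\end{align*}
Here the boundary term on $\partial\Omega$ vanishes by the homogeneous Neumann condition. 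The initial gradient term is bounded by \eqref{CI}. For the reaction term I use Young's inequality to absorb $\frac12\|\dert\aeps\|^2_{\mathrm L^2\mathrm L^2}$. What remains is $\|\ceps\|_{\mathrm L^2\mathrm L^2}^2+\|\aeps\beps\|^2_{\mathrm L^2\mathrm L^2}$. The first is controlled by Lemma \ref{L2 integral estimate}. The second is bounded by $T\|\aeps\|^2_{\mathrm L^\infty\mathrm L^4}\|\beps\|^2_{\mathrm L^\infty\mathrm L^4}\le T\kappa_3^4$, using Proposition \ref{L4 norm estimate} and Cauchy--Schwarz. This is exactly why the $\mathrm L^\infty\mathrm L^4$ bound was proved first. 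The same computation works for $a^2_\eps$. For $a^3_\eps$ the source $\aeps\beps-\ceps$ is handled identically; note that no $\mathrm L^4$ bound on $a^3_\eps$ is needed.

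The main obstacle is the boundary term $\eps\int_0^s\int_{\Gamma_\eps}\psi_1\dert\aeps$, because the trace of $\dert\aeps$ is not controlled. I will integrate by parts in time, which is where $\psi\in C^1([0,T],B)$ and the compatibility condition $\psi_i(0,\cdot)=0$ from \eqref{compatibility condition} enter:
\begin{align*}
\eps\int_0^s\int_{\Gamma_\eps}\psi_1\dert\aeps = \eps\int_{\Gamma_\eps}\psi_1(s)\aeps(s) - \eps\int_0^s\int_{\Gamma_\eps}\dert\psi_1\,\aeps .
\end{align*}
The time-integrated term is treated exactly as in \eqref{ineq: grad prooof bdry}, with $\|\dert\psi_1\|_B$ replacing $\|\psi_1\|_B$. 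By the trace Lemma \ref{app: trace theory} and Lemma \ref{anisotropic norm estimate}, it is bounded by a constant independent of $\eps$.

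The pointwise-in-time term is where care is needed. I bound it by $\|\psi_1\|_{C([0,T];B)}(\eps|\Gamma_\eps|)^{1/2}\eps^{1/2}\|\aeps(s)\|_{\mathrm L^2(\Gamma_\eps)}$. Applying the scaled trace inequality at fixed time, $\eps\|u\|^2_{\mathrm L^2(\Gamma_\eps)}\le C_{tr}^2\|u\|^2_{\mathrm H^1(\Omega_{r_\eps})}$, gives
\begin{align*}
\Big|\eps\int_{\Gamma_\eps}\psi_1(s)\aeps(s)\Big| \le C\big(\|\aeps(s)\|_{\mathrm L^2(\Omega_{r_\eps})}+\|\nabla\aeps(s)\|_{\mathrm L^2(\Omega_{r_\eps})}\big) \le C + C\kappa_2 + \frac{d_1}{4}\|\nabla\aeps(s)\|^2_{\mathrm L^2(\Omega_{r_\eps})} .
\end{align*}
Here Lemma \ref{anisotropic norm estimate} controls the $\mathrm L^\infty\mathrm L^2$ part, and Young's inequality produces the last term. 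That last term is absorbed into the left-hand side. This leaves $\frac12\|\dert\aeps\|^2_{\mathrm L^2((0,s);\mathrm L^2)}+\frac{d_1}{4}\|\nabla\aeps(s)\|^2_{\mathrm L^2}\le C$ uniformly in $s\in[0,T]$ and $\eps$. Taking the supremum over $s$ yields \eqref{uniform: time Der L2L2 and grad LinftyL2}.

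The regularity from Theorem \ref{existence, uniqueness,regularity, epsilon scale} justifies using $\dert a^i_\eps$ as a test function and performing the time integration by parts.
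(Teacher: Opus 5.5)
Your proposal is correct and follows the paper's proof essentially step by step. It tests with $\partial_t a^i_\eps$, applies Young's inequality to the reaction term using the $\mathrm L^\infty\mathrm L^4$ bound, integrates the boundary term by parts in time via $\psi_i(0,\cdot)=0$, and controls the time-integrated and pointwise-in-time boundary terms with the $\eps$-scaled trace inequality, absorbing $\frac{d_1}{4}\|\nabla a^1_\eps(s)\|^2_{\mathrm L^2}$ into the left-hand side; your constants (e.g. $T\kappa_3^4$ and the use of $\|\partial_t\psi_1\|$ in the time-integrated term) are in fact tracked more accurately than in the written argument.
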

\begin{proof}
    Testing the equation \eqref{eq: aeps} with $\dert \aeps$ on $(0,s) \times \Omega_{r_{\eps}}$ for $0 \le s \le T$ yields 
    \begin{equation*}
        \int_0^s \int_{\Omega_{r_{\eps}}} \big( \dert \aeps \big)^2 
        - d_1 \int_0^s \int_{\Omega_{r_{\eps}}} \Delta \aeps  \dert \aeps
        = \int_0^s \int_{\Omega_{r_{\eps}}} (\ceps - \aeps \beps) \dert \aeps.
    \end{equation*}
    Integrating by parts yields
    \begin{equation*}
        \int_0^s \int_{\Omega_{r_{\eps}}} \big( \dert \aeps \big)^2 
        + d_1 \int_0^s \int_{\Omega_{r_{\eps}}} \nabla \aeps \nabla \dert \aeps
        = \eps d_1 \int_0^s \int_{\Gamma_\eps} \psi_1 \left( t,x,\frac{x}{\eps} \right) \dert \aeps
        + \int_0^s \int_{\Omega_{r_{\eps}}} (\ceps - \aeps \beps) \dert \aeps.
    \end{equation*}
    Employing Young's inequality on the last term and Integrating by parts with respect to time variable in the boundary integral gives
    \begin{equation*}
    \begin{aligned}
        \int_0^s \int_{\Omega_{r_{\eps}}}& \big( \dert \aeps \big)^2 
        + \frac{d_1}{2} \int_0^s \int_{\Omega_{r_{\eps}}} \dert  | \nabla \aeps |^2
        \\
        & \leq \eps d_1 \int_{\Gamma_\eps} \aeps( s,\cdot ) \psi_1 \left( s,\cdot,\frac{\cdot}{\eps} \right) 
        - \eps d_1 \int_{\Gamma_\eps} \aeps( 0,\cdot ) \psi_1 \left( 0,\cdot,\frac{\cdot}{\eps} \right) 
        - \eps d_1 \int_0^s \int_{\Gamma_\eps} \aeps \dert \psi_1 \left( t,\cdot,\frac{\cdot}{\eps} \right)
        \\
        & \qquad \quad + \frac{1}{2}\int_0^s \int_{\Omega_{r_{\eps}}} | \dert \aeps |^2
        + \frac{1}{2}\int_0^s \int_{\Omega_{r_{\eps}}} \big| \ceps - \aeps \beps\big|^2 
        \\
        & \leq \eps d_1 \int_{\Gamma_\eps} \aeps( s,\cdot ) \psi_1 \left( s,\cdot,\frac{\cdot}{\eps} \right) 
        - \eps d_1 \int_0^s \int_{\Gamma_\eps} \aeps \dert \psi_1 \left( t,\cdot,\frac{\cdot}{\eps} \right) 
        + \frac{1}{2}\int_0^s \int_{\Omega_{r_{\eps}}} | \dert \aeps |^2
        \\
        & \qquad \quad + \int_0^s \int_{\Omega_{r_{\eps}}} | \ceps |^2 + \int_0^s \int_{\Omega_{r_{\eps}}} \big| \aeps \beps\big|^2, 
        \end{aligned}
    \end{equation*}
    where we have used the fact that $\psi\left( 0,x, \frac{x}{\eps} \right)=0$. Integrating the second term in the right with respect to time gives us 
    \begin{align*}
       \frac{1}{2}\int_0^s \int_{\Omega_{r_{\eps}}} \big( \dert \aeps \big)^2 
       & + \frac{d_1}{2} \int_{\Omega_{r_{\eps}}} | \nabla \aeps(s,\cdot) |^2 
       \\
       & \leq \eps d_1 \int_{\Gamma_\eps} \aeps( s,\cdot ) \psi_1 \left( s,\cdot,\frac{\cdot}{\eps} \right) 
       - \eps d_1 \int_0^s \int_{\Gamma_\eps} \aeps \dert \psi_1 \left( t,\cdot,\frac{\cdot}{\eps} \right) 
        \\
        & \qquad \quad + \int_0^s \int_{\Omega_{r_{\eps}}} | \ceps |^2 
        + \int_0^s \int_{\Omega_{r_{\eps}}} \big| \aeps \beps\big|^2 
        + \frac{d_1}{2} \int_{\Omega_{r_{\eps}}} | \nabla \aeps(0,\cdot) |^2 
        \\
        & \leq C^I+\kappa_1^2 + \eps d_1 \int_{\Gamma_\eps} \aeps( s,\cdot ) \psi_1 \left( s,\cdot,\frac{\cdot}{\eps} \right) 
        - \eps d_1 \int_0^s \int_{\Gamma_\eps} \aeps \dert \psi_1 \left( t,\cdot,\frac{\cdot}{\eps} \right) 
        \\
        & \qquad \quad + \left( \int_0^s \int_{\Omega_{r_{\eps}}} \big( \aeps \big)^4 \right)^{1/2} \left( \int_0^s \int_{\Omega_{r_{\eps}}} \big( \beps \big)^4 \right)^{1/2}, 
        \end{align*}
where we have used assumption \eqref{CI} and Lemma \ref{L2 integral estimate}. Thanks to Proposition \ref{L4 norm estimate}, the above estimate can be written as
        \begin{equation}\label{ineq: time l2 juicy}
    \begin{aligned}
        \frac{1}{2}\int_0^s \int_{\Omega_{r_{\eps}}} &\big( \dert \aeps \big)^2 
        + \frac{d_1}{2} \int_{\Omega_{r_{\eps}}} | \nabla \aeps(s,\cdot) |^2\\
        &\leq C_{10} + \eps d_1 \int_{\Gamma_\eps} \aeps( s,\cdot ) \psi_1 \left( s,\cdot,\frac{\cdot}{\eps} \right) 
        - \eps d_1 \int_0^s \int_{\Gamma_\eps} \aeps \dert \psi_1 \left( t,x,\frac{x}{\eps} \right) \, 
        \\
        & \leq C_{10}+ \eps d_1 \Vert \psi \Vert_{B} \int_{\Gamma_\eps} \aeps( s,\cdot )
        + \eps d_1 \Vert \psi \Vert_{B} \int_0^s \int_{\Gamma_\eps} \aeps
        \\
        & \leq C_{10} + \eps d_1 \Vert \psi \Vert_{B} \sqrt{|\Gamma_{\eps}|} \Vert \aeps(s,\cdot) \Vert_{\mathrm L^2(\Gamma_{\eps})} 
        + \eps d_1 \Vert \psi \Vert_{B} \sqrt{ s |\Gamma_{\eps}|} \Vert \aeps \Vert_{\mathrm L^2(0,s; \mathrm L^2(\Gamma_{\eps}))} \\
        & \leq C_{10}+\mathcal{J}_1+\mathcal{J}_2.
    \end{aligned}
    \end{equation}
    where  $C_{10}:=C^I+\kappa_1^2+T\kappa_3$. Using Lemma \ref{app: trace theory} we bound $\mathcal{J}_2$ as
    \begin{align*}\label{J2 calc}
        \mathcal{J}_2 \leq \sqrt{\eps} d_1 \Vert \psi \Vert_{B} s^{1/2} \sqrt{\eps |\Gamma_{\eps}| } C_{tr}^{\frac 12} \left[ \frac{1}{\eps} \int_0^s \int_{\Omega_{r_{\eps}}} \big|\aeps\big|^2  + \eps \int_0^s \int_{\Omega_{r_{\eps}}} \big|\nabla \aeps \big|^2  \right]^{1/2}. 
    \end{align*}
    Invoking Lemma \ref{L2 integral estimate} and Lemma \ref{anisotropic norm estimate} yields
    \begin{align}\label{J2 calc}
        \mathcal{J}_2 \leq  d_1 \Vert \psi \Vert_{B} T^{1/2} \gamma^{\frac 12} C_{tr}^{\frac 12}\left(\kappa_1^{\frac 12}+\kappa_2^{\frac 12}\right)=: C_{11},
    \end{align}
    where $\gamma$ is defined in \eqref{cond: boundary Gammaeps}. Using Lemma \ref{app: trace theory} we arrive at the
following bound:
    \begin{align*}
         \mathcal{J}_1 & \leq \sqrt{\eps} d_1 \Vert \psi \Vert_{B} \sqrt{\eps |\Gamma_{\eps}| } C_{tr}^{\frac 12} \left[ \frac{1}{\eps} \int_{\Omega_{r_{\eps}}} \big|\aeps(s,\cdot)\big|^2 + \eps \int_{\Omega_{r_{\eps}}} \big|\nabla \aeps(s,\cdot)\big|^2 \right]^{1/2}
         \\
         & \leq \sqrt{\eps} d_1 \Vert \psi \Vert_{B} \sqrt{\eps |\Gamma_{\eps}| } C_{tr}^{\frac 12} \left[ \sqrt{ \frac{1}{\eps} \int_{\Omega_{r_{\eps}}} \big|\aeps(s,\cdot)\big|^2 } + \sqrt{ \eps \int_{\Omega_{r_{\eps}}} \big|\nabla \aeps(s,\cdot)\big|^2 } \right]
         \\
         & \leq d_1\|\psi\|_{B}\gamma^{\frac 12} C_{tr}^{\frac 12} \kappa_2^{\frac 12} + d_1\|\psi\|_{B}\gamma^{\frac 12} C_{tr}^{\frac 12} \left\Vert \nabla \aeps(s,\cdot) \right\Vert_{\mathrm L^2(\Omega_{r_{\eps}})},
        \end{align*}
    where the last step is due to Lemma \ref{anisotropic norm estimate} and \eqref{cond: boundary Gammaeps}. Thanks to Young's inequality we obtain
    \begin{equation}\label{ineq: time l2 mid juicy}
        \begin{aligned}
       \mathcal{J}_1 \leq C_{12} + \frac{d_1}{4} \int_{\Omega_{r_{\eps}}} \big|\nabla \aeps(s,\cdot)\big|^2,
    \end{aligned}
    \end{equation}   
    where $C_{12}$ is a positive constant, independent of $\eps$. Combining the estimates \eqref{ineq: time l2 juicy}, \eqref{J2 calc} and \eqref{ineq: time l2 mid juicy} we obtain
    \begin{equation*}
        \frac{1}{2}\int_0^s \int_{\Omega_{r_{\eps}}} \big( \dert \aeps \big)^2 
       + \frac{d_1}{4} \int_{\Omega_{r_{\eps}}} | \nabla \aeps(s,\cdot) |^2
       \leq C_{10}+C_{11}+C_{12}. 
    \end{equation*}
   Since the above inequality is true for a.e. $0 \le s \le T$, we get the desired result.
   Similar calculation holds true for $\beps$ and $\ceps$.
\end{proof}

\section{Homogenization for $\mathcal{O}(\eps)$ perforations}\label{sec: Homogenization eps}
In this section we homogenize the system \eqref{eq: aeps}-\eqref{eq: ceps}. For the time being let us consider the equation \eqref{eq: aeps} and similar calculations holds for \eqref{eq: beps} and \eqref{eq: ceps} as well. For $1\leq i\leq 3$, let $\Tilde{a^i_{\eps}}, \Tilde{\nabla a^i_{\eps}}$, $\Tilde{\dert a^i_{\eps}}$ denote the extension by zero outside $\Omega_{r_{\eps}}$ of the functions $a^i_{\eps}, \nabla a^i_{\eps}, \partial_t a^i_{\eps}$ respectively. Before proceeding for the homogenization, observe that $\Tilde{a^i_{\eps}}, \Tilde{\nabla a^i_{\eps}}$ and $\Tilde{\dert a^i_{\eps}}$ are uniformly bounded and hence up to a subsequence two-scale converge to $\chi_{Y^*}(y)a^i(t,x), \, \, \chi_{Y^*}(y)[\nabla a^i(t,x) + \nabla_y a^{i,1}(t,x,y)]$ and $\chi_{Y^*}(y) \dert a^i(t,x)$ respectively, where $a^i \in \mathrm L^2((0,T);\mathrm H^1(\Omega))$ and $a^{i,1} \in L^2((0,T)\times \Omega; \mathrm H_{\rm per}^1(\Omega)/\RR )$ (see Lemma \ref{app: 2scale} and \ref{app: 2scale grad}).
Let us define the following function 
\begin{equation}\label{def: Feps}
F_{\eps}(t,x) := \ceps(t,x) - \aeps(t,x) \beps(t,x) \qquad t \in (0,T),\, x \in \Omega_{r_{\eps}}
\end{equation}
Thanks to the Lemma \ref{L2 integral estimate} and Proposition \ref{L4 norm estimate}, we observe that 
\[
    \Vert F_{\eps} \Vert_{\mathrm L^2((0,T) \times \Omega_{r_{\eps}})} \leq \kappa_1+2^{\frac 12}T^{\frac{1}{2}} \kappa_3^{\frac 12}. 
\]
Let us now state the following Lemma regarding the two-scale convergence of $F_{\eps}$.

\begin{lemma}
    Let $F_{\eps}$ be as defined in \eqref{def: Feps}. Then $\Tilde{F_{\eps}}$ $($extension by zero to $\Omega)$ two-scale converges to $F_h$, defined as  
    \begin{equation}\label{def: Fhom}
        F_h(t,x,y) := \chi_{Y^*}(y)\big(a^3(t,x)-a^1(t,x)a^2(t,x)\big), \qquad y \in Y,\, t \in (0,T),\, x \in \Omega,
    \end{equation}
    where $\chi_{Y^*}$ is the characteristic function of $Y^*$ \eqref{area outside the hole in unit cube}.
\end{lemma}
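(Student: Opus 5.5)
The plan is to prove that $\Tilde{F_\eps}=\Tilde{\ceps}-\Tilde{\aeps}\,\Tilde{\beps}$ two-scale converges to $F_h$ by treating the linear and quadratic parts separately. Two-scale convergence is linear, so the linear part is immediate: by the discussion preceding \eqref{def: Feps}, $\Tilde{\ceps}$ two-scale converges to $\chi_{Y^*}(y)a^3(t,x)$. All the work is in the product. Two-scale convergence does not pass to products in general, so I will upgrade one factor to strong convergence in a fixed space. Then I will use the standard fact that a strongly converging sequence times a two-scale converging sequence two-scale converges to the product of the limits.

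The first step is strong compactness of extensions. I will use the uniform extension operator $P_\eps$ from Appendix~\ref{sec: appendix extension operator} for perforated domains (with a security zone). It satisfies
\[
\|P_\eps u\|_{\mathrm H^1(\Omega)}\le C\|u\|_{\mathrm H^1(\Omega_{r_\eps})},
\]
and in the time-dependent version it commutes with $\partial_t$, being linear and time-independent. By Lemma \ref{anisotropic norm estimate} and Proposition \ref{L2 integral estimate time derivative}, $P_\eps a^i_\eps$ is bounded in $\mathrm L^\infty((0,T);\mathrm H^1(\Omega))$ and $\partial_t P_\eps a^i_\eps$ is bounded in $\mathrm L^2((0,T);\mathrm L^2(\Omega))$. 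Aubin--Lions then gives, along a subsequence, $P_\eps a^i_\eps\to \hat a^i$ strongly in $\mathrm L^2((0,T)\times\Omega)$, and even in $C([0,T];\mathrm L^2(\Omega))$. Since $\Tilde{a^i_\eps}=\chi_{\Omega_{r_\eps}}P_\eps a^i_\eps$ and $\chi_{\Omega_{r_\eps}}$ two-scale converges strongly to $\chi_{Y^*}(y)$, the two-scale limit of $\Tilde{a^i_\eps}$ is $\chi_{Y^*}\hat a^i$. Hence $\hat a^i=a^i$, and by uniqueness of the limit the whole subsequence is consistent.

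The second step is the product. I write $\Tilde{\aeps}\,\Tilde{\beps}=\Tilde{\aeps}\,P_\eps\beps$ on $\Omega$. Here $P_\eps\beps\to a^2$ strongly in $\mathrm L^2$, and $\Tilde{\aeps}$ two-scale converges to $\chi_{Y^*}a^1$. The main obstacle is that an $\mathrm L^2$-strong times $\mathrm L^2$-two-scale product is only bounded in $\mathrm L^1$, so it cannot directly be tested against $\mathrm L^2$-admissible functions. I will handle this with the uniform higher integrability already available. Proposition \ref{L4 norm estimate} gives $\Tilde{\aeps}$ bounded in $\mathrm L^\infty((0,T);\mathrm L^4)$, and the extension bound plus Sobolev gives $P_\eps\beps$ bounded in $\mathrm L^\infty((0,T);\mathrm L^6)$. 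Interpolating with the strong $\mathrm L^2$ convergence yields $P_\eps\beps\to a^2$ strongly in $\mathrm L^4((0,T)\times\Omega)$.

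Now fix a test function $\phi(t,x,y)$, smooth and $Y$-periodic in $y$, and split
\[
\int \Tilde{\aeps}P_\eps\beps\,\phi^\eps=\int \Tilde{\aeps}(P_\eps\beps-a^2)\phi^\eps+\int\Tilde{\aeps}\,a^2\phi^\eps .
\]
The first term is bounded by $\|\Tilde\aeps\|_{\mathrm L^{4/3}}\|P_\eps\beps-a^2\|_{\mathrm L^4}\|\phi\|_\infty\to0$. For the second term, $a^2\phi(t,x,y)$ is an admissible test function as long as $a^2$ is smooth enough. If needed, I will approximate $a^2$ in $\mathrm L^4$ by smooth functions, controlling the error uniformly through the $\mathrm L^{4/3}$ bound on $\Tilde\aeps$. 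The limit of the second term is then $\int\!\int_Y\chi_{Y^*}a^1a^2\phi$. Combining this with the linear part gives $\Tilde{F_\eps}\xrightharpoonup{2-scale}\chi_{Y^*}(a^3-a^1a^2)=F_h$.

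Finally, the uniform bound $\|F_\eps\|_{\mathrm L^2}\le C$ extends the convergence from smooth test functions to all $\mathrm L^2$-admissible ones by density. Uniqueness of the limit upgrades the subsequential statement to the full sequence whenever the limit system \eqref{eq: hom aeps} has a unique solution.
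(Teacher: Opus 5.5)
This is essentially the paper's proof. Your $P_\eps$ is the paper's $E_\eps$, and the commutation $\partial_t E_\eps u=\mathcal G_\eps\partial_t u$ that you take for granted is proved there as Theorem~\ref{extension: time derivative theorem}. The remaining steps also match: strong $\mathrm L^2$ compactness from Aubin--Lions, identification of the limit through $\Tilde{a^i_\eps}=\chi_{\Omega_{r_\eps}}E_\eps a^i_\eps$, and passage to the limit in $\Tilde{\aeps}\,E_\eps\beps$ as a strong-times-two-scale product, which the paper does by citing Proposition~\ref{app: 2scale product}.

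Your $\mathrm L^4$ interpolation is harmless but unnecessary. Against bounded test functions, $\|\Tilde{\aeps}\|_{\mathrm L^2}\|E_\eps\beps-a^2\|_{\mathrm L^2}\|\phi\|_\infty\to0$ already suffices.

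One caveat applies equally to you and the paper: both treat the security zone as negligible. On $\Omega\setminus\Omega^\delta$ there are no holes, so the two-scale limit of $\chi_{\Omega_{r_\eps}}$ there is $1$, not $\chi_{Y^*}(y)$.
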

\begin{proof}
Define the following function 
\[
v^i_{\eps} := E_{\eps} a^i_{\eps} \qquad \forall 1 \leq i \leq 3,
\]
where $E_{\eps}$ is the extension operator as described in Lemma \ref{extension: desvilletes}. Thanks to Lemma \ref{L2 integral estimate}, Lemma \ref{anisotropic norm estimate} and Lemma \ref{extension: desvilletes}, we observe that 
\[
\| v^i_\eps \|_{\mathrm L^2((0,T);\mathrm{H}^1(\Omega))} 
\le \| a^i_\eps \|_{\mathrm L^2((0,T);\mathrm{H}^1(\Omega_{r_{\eps}}))} 
\le C_E (\kappa_1+\kappa_2).
\]
Also thanks to Theorem \ref{extension: time derivative theorem} and Proposition \ref{L2 integral estimate time derivative}, we have
\[
\| \partial_t v^i_\eps \|_{\mathrm L^2((0,T);\mathrm{L}^2(\Omega))} 
\le \| \partial_t a^i_\eps \|_{\mathrm L^2((0,T);\mathrm{L}^2(\Omega_{r_{\eps}}))} 
\le C_g \kappa_4.
\]
Using Aubin-Lions Lemma we obtain
\[
v_{\eps}^i \rightarrow v_i \qquad \text{in } \mathrm L^2((0,T);\mathrm L^2(\Omega)) 
\]
up to a subsequence, still indexed by $\eps$. Let $\phi \in C^1([0, T] \times \overline{\Omega}; C^\infty_\#(Y))$. Then, for $1\leq i\leq 3$, we obtain(see Lemma \ref{app: 2scale product}) 
\begin{align*}
    \int\limits_0^T \int\limits_{\Omega} \Tilde{a^i_{\eps}} \phi \left( t,x,\frac{x}{\eps} \right)
    =& \int\limits_0^T \int\limits_{\Omega_{r_{\eps}}} a^i_{\eps} \phi \left( t,x,\frac{x}{\eps} \right)
     = \int\limits_0^T \int\limits_{\Omega_{r_{\eps}}} v^i_{\eps} \phi \left( t,x,\frac{x}{\eps} \right)
     = \int\limits_0^T \int\limits_{\Omega} \Tilde{1} v^i_{\eps}  \phi \left( t,x,\frac{x}{\eps} \right)
     \\
     & \to \int\limits_0^T \int\limits_{\Omega} \int\limits_{Y} \chi_{Y^*}(y) v_i(t,x)  \phi \left( t,x,y \right) \, dt \,dx \, dy.
\end{align*}
where $\Tilde{1}$ is extension of the constant function $1$ by zero outside $\Omega_{r_{\eps}}$. As a result we obtain that 
\begin{equation}\label{two scale limit: a_i}
\Tilde{ a_{\eps}^i } \xrightarrow{2-s} \chi_{Y^*}(y) v_i(t,x).
\end{equation}
By the uniqueness of two-scale limit we observe that 
\begin{equation}\label{two-scale limit: equality}
    v_i(t,x) = a^i(t,x) \qquad \text{ for a.e.}\,  x \in \Omega \text{ and } t \in (0,T) . 
\end{equation}
Let $\phi \in C^1([0, T] \times \overline{\Omega}; C^\infty_\#(Y))$. Then
 \begin{align*}
        \int\limits_0^T \int\limits_{\Omega_{r_{\eps}}} F_\eps \phi \left( t,x,\frac{x}{\eps} \right)
        & = \int\limits_0^T \int\limits_{\Omega_{r_{\eps}}} \ceps \phi \left( t,x,\frac{x}{\eps} \right) 
        - \int\limits_0^T \int\limits_{\Omega_{r_{\eps}}} \aeps\beps \phi \left( t,x,\frac{x}{\eps} \right) 
        \\
        & = \int\limits_0^T \int\limits_{\Omega} \Tilde{\ceps} \phi \left( t,x,\frac{x}{\eps} \right)
        - \int\limits_0^T \int\limits_{\Omega} \Tilde{\aeps} v^2_{\eps} \phi \left( t,x,\frac{x}{\eps} \right) 
        \\
        & \to \int\limits_0^T \int\limits_{\Omega} \int\limits_{Y} \chi_{Y^*}(y) \big(a^3(t,x) - a^1(t,x) a^2(t,x)\big) \phi( t,x,y ). 
    \end{align*}
Hence proved.
\end{proof}
Recall the equation for $\aeps$
\begin{equation}\label{eq: aeps1}
    \left\{
    \begin{aligned}
        & \partial_t a^1_{\varepsilon} - d_1 \Delta a^1_{\varepsilon} = F_{\eps} \qquad && \text{in} \ (0,T) \times \Omega_{r_{\eps}} \\
        & \nabla a^1_{\varepsilon} \cdot n_{\varepsilon} = \varepsilon \psi_1\left(t,x, \frac{x}{\varepsilon} \right) \qquad && \text{on}\  (0,T) \times \Gamma_{\varepsilon} \\
        & \nabla a^1_{\varepsilon} \cdot n = 0 \qquad && \text{on}\ (0,T) \times \partial \Omega \\
        & a^1_{\varepsilon}(0,x) = a^{1}_{\eps,0}(x) \qquad && \text{in} \ \Omega_{r_{\eps}}.
    \end{aligned}
    \right.
\end{equation}
Define the following function 
\begin{equation}\label{test function: admissible}
\phi_{\eps}(t,x):= \phi_{0}(t,x) + \eps \phi_1 \left( t,x,\frac{x}{\eps} \right),  \end{equation}
where $\phi_0 \in \mathrm C^1([0,T] \times \overline{\Omega})$ and $\phi_1 \in C^1([0,T] \times \overline{\Omega}; \mathrm C^{\infty}_{\rm per}(Y)) $.
Testing \eqref{eq: aeps1} with $\phi_{\eps}$ as defined in \eqref{test function: admissible} and Integration by parts yields
\begin{equation}\label{eq: hom}
    \int\limits_0^T \int\limits_{\Omega_{r_{\eps}}} \dert \aeps \phi_{\eps} 
    + d_1 \int\limits_0^T \int\limits_{\Omega_{r_{\eps}}} \nabla \aeps \nabla \phi_{\eps}
    = \int\limits_0^T \int\limits_{\Omega_{r_{\eps}}} F_{\eps} \phi_{\eps}
    + \eps d_1 \int\limits_0^T \int\limits_{\Gamma_{\eps}} \psi_1 \left( t,\cdot, \frac{\cdot}{\eps} \right) \phi_{\eps}.
\end{equation}
Let us look at the left hand side 
\begin{equation}
    \begin{aligned}
    \int\limits_0^T \int\limits_{\Omega_{r_{\eps}}} \dert \aeps \phi_{\eps} 
    &+ d_1 \int\limits_0^T \int\limits_{\Omega_{r_{\eps}}} \nabla \aeps \nabla \phi_{\eps}
    \\
    & = \int\limits_0^T \int\limits_{\Omega} \Tilde{\dert \aeps} \phi_{\eps} 
    + d_1 \int\limits_0^T \int\limits_{\Omega} \Tilde{\nabla \aeps} \nabla \phi_{\eps}
    \\
    & \rightarrow \int\limits_0^T \int\limits_{\Omega} \int\limits_{Y} \chi_{Y^*}(y) \dert a^1 \phi_{0} 
    + d_1 \int\limits_0^T \int\limits_{\Omega} \int\limits_{Y} \chi_{Y^*}(y) [\nabla a^1 + \nabla_y a^{1,1}] [\nabla \phi_0 + \nabla_y \phi_{1}].
    \end{aligned}
\end{equation}
Similarly for the right hand side 
\begin{equation}
    \begin{aligned}
    \int\limits_0^T \int\limits_{\Omega_{r_{\eps}}} F_{\eps} \phi_{\eps}
    + \eps d_1 \int\limits_0^T \int\limits_{\Omega_{r_{\eps}}} \psi_1 \left( t,\cdot, \frac{\cdot}{\eps} \right) \phi_{\eps}
    & = \int\limits_0^T \int\limits_{\Omega} \Tilde{F_{\eps}} \phi_{\eps}
    + \eps d_1 \int\limits_0^T \int\limits_{\Gamma_{\eps}} \psi_1 \left( t,\cdot, \frac{\cdot}{\eps} \right) \phi_{\eps}
    \\
    & \rightarrow \int\limits_0^T \int\limits_{\Omega} \int\limits_{Y} F_h \phi_{0} 
    + d_1 \int\limits_0^T \int\limits_{\Omega} \int\limits_{\Gamma} \psi_1(t,x,y) \phi_0, 
    \end{aligned}
\end{equation}
where we have used Proposition \ref{app: 2scale trace}. Thus we obtain the following variational formulation 
\begin{equation}
    \begin{aligned}
    \int\limits_0^T \int\limits_{\Omega} \int\limits_{Y} \chi_{Y^*}(y) \dert a^1 \phi_{0} 
    & + d_1 \int\limits_0^T \int\limits_{\Omega} \int\limits_{Y} \chi_{Y^*}(y) [\nabla a^1 + \nabla_y a^{1,1}] [\nabla \phi_0 + \nabla_y \phi_{1}] 
    \\
    &= \int\limits_0^T \int\limits_{\Omega} \int\limits_{Y^*} F_h \phi_{0} 
    + d_1 \int\limits_0^T \int\limits_{\Omega} \int\limits_{\Gamma} \psi_1(t,x,y) \phi_0.
    \end{aligned}
\end{equation}
which corresponds to the following homogenized system 
\begin{equation*}
    \left\{
    \begin{aligned}
         \theta \frac{\partial a^1}{\partial t}(t,x) 
        - & d_1 \dv \left[ \int\limits_{Y^*} [ \nabla a^1(t,x) + \nabla_y a^{1,1}(t,x,y)] \right] 
        \\& \qquad \qquad = \int\limits_{Y} F_h(t,x,y)
          + d_1 \int\limits_{\Gamma} \psi_1(t,x,y) 
        \qquad && \text{in }(0,T) \times \Omega
        \\
         &\int\limits_{Y^*} [ \nabla a^1(t,x) + \nabla_y a^{1,1}(t,x,y)] \cdot n  = 0 \qquad && \text{on } (0,T) \times \partial \Omega 
        \\
         &a^1(0,x)  = a^1_0(x) \qquad && \text{in } \Omega 
        \\
        & - d_1 \dv_y [\nabla a^1(t,x) + \nabla_y a^{1,1}(t,x,y)]  = 0 \qquad && \text{in } (0,T) \times \Omega \times Y^*
        \\
        & [\nabla a^1(t,x) + \nabla_y a^{1,1}(t,x,y)] \cdot n  = 0 \qquad && \text{on } (0,T) \times \Omega \times \Gamma,
    \end{aligned}
    \right.
\end{equation*}
where we have passed the two-scale limit for initial condition (see \cite[Section 3.2]{desvillettes2018homogenization}, \cite[Chapter 11]{MR1765047}) and $\theta = |Y^*|$, where $Y^*$ is as defined in \eqref{cond: boundary Gammaeps}. Using the linearity of the equation we can write $a^{1,1}$ as follows 
\[
a^{1,1}(t,x,y) = \omega_j(y) \frac{\partial a^1}{\partial x_j}(t,x),
\]
where $\omega_j$ solves the following cell problem
\begin{equation}
    \left\{
    \begin{aligned}
        & -\dv_y [ \nabla_y\omega_j + e_j] = 0 \qquad Y^*
        \\
        & [ \nabla_y\omega_j + e_j]\cdot n = 0 \qquad \Gamma
        \\
        & \omega_j \in \mathrm H^1(Y^*) \text{ and is Y-periodic}.
    \end{aligned}
    \right.
\end{equation}
Hence we obtain the following homogenized equation
\begin{equation}\label{eq: hom aeps}
    \left\{
    \begin{aligned}
        \theta \frac{\partial a^1}{\partial t}(t,x) 
        - &d_1 \dv \left[ D\nabla a^1(t,x) \right] \\
        =& \theta \big( a^3(t,x) - a^1(t,x) a^2(t,x)\big) + d_1 \int\limits_{\Gamma} \psi_1(t,x,y) 
        \qquad && \text{in} \ (0,T) \times \Omega
        \\
        & D\nabla a^1(t,x) \cdot n = 0 \qquad && \text{on} \ (0,T) \times \partial \Omega 
        \\
        & a^1(0,x) = a^1_0(x) \qquad && \text{in} \ \Omega, 
    \end{aligned}
    \right.
\end{equation}
where $D$ is a $3 \times 3$ matrix whose entries are given by 
\[
D_{ij}:= \int_{Y^*} \delta_{ij} + \frac{\partial \omega_j}{\partial y_i}.
\]
Similar calculations holds true for \eqref{eq: beps} and \eqref{eq: ceps} as well.
Thus we conclude the proof of Theorem \ref{main convergence theorem}.

\section{Homogenization for $\mathcal{O}(\eps^{\alpha})$ perforations}\label{sec: Homogenization rapid}

In this section, we homogenize the equation \eqref{eq: aeps}-\eqref{eq: ceps} for the case when the perforations are of the size $\mathcal{O}(\eps^{\alpha})$ for $\alpha>1$. Since all the bounds mentioned in Section \ref{sec: Uniform estimates} holds true for these perforations as well. We define the following function
\[
\mathfrak{v}_{\eps}^i(t,x):= E_{\eps}(a_{\eps}^i),
\]
and from the calculations done in previous section we deduce that 
\[
\mathfrak{v}_{\eps}^i \rightarrow \mathfrak{v}_i \qquad \text{in } \mathrm L^2((0,T);\mathrm L^2(\Omega)), 
\]
up to a subsequence, still indexed by $\eps$. Also observe that in the current regime of the holes we have the following strong convergence
\begin{align*}
    \chi_{\Omega_{r_{\eps}}} \to 1 , \ \text{strongly in }\mathrm{L}^p(\Omega) \text{ for } 1 \leq p < \infty.
\end{align*}
Hence the following strong convergence also holds:
\begin{equation}\label{eq: strong cgnc rapid}
    \Tilde{a_{\eps}^i} \to \mathfrak{v}_i \qquad \text{in } \mathrm L^2((0,T);\mathrm L^2(\Omega)). 
\end{equation}
Testing the equation \eqref{eq: aeps} with $\phi \in C^{1}([0,T] ;C^{\infty}( \overline{\Omega}))$ such that $\phi(T,\cdot) = 0$ results in
\begin{equation}\label{eq: variational formulation rapid}
    \int\limits_0^T \int\limits_{\Omega_{r_{\eps}}} \dert \aeps \phi 
    + d_1 \int\limits_0^T \int\limits_{\Omega_{r_{\eps}}} \nabla \aeps \cdot \nabla \phi
    - d_1 \eps \int\limits_0^T \int\limits_{\Gamma_{\eps}} \psi_1\left(t,\cdot, \frac{\cdot}{\eps} \right) \phi
    = \int\limits_0^T \int\limits_{\Omega_{r_{\eps}}} \ceps \phi 
    - \int\limits_0^T \int\limits_{\Omega_{r_{\eps}}} \aeps \beps \phi 
\end{equation}
Consider the boundary integral
\begin{align*}
\eps  \int\limits_0^T \int\limits_{\Gamma_{\eps}} \psi_1\left(t,\cdot, \frac{\cdot}{\eps} \right) \phi
& \leq \eps \| \psi \|_{B} \| \phi \|_{\mathrm L^{\infty}((0,T)\times \overline{\Omega})} T |\Gamma_{\eps}|
\to 0
\end{align*}
where we have used the fact that $\eps |\Gamma_{\eps}| \to 0$ thanks to the size of the perforations. Integrating by parts with respect to the time derivative in the first term in equation \eqref{eq: variational formulation rapid} results in 
\begin{align*}
\int_{0}^{T} \int_{\Omega_{r_{\eps}}} \partial_{t} \aeps \cdot \phi 
&= -\int_{0}^{T} \int_{\Omega_{r_{\eps}}} \aeps  \partial_{t} \phi 
+ \int_{\Omega_{r_{\eps}}} \aeps(T, \cdot) \phi(T, \cdot) 
 - \int_{\Omega_{r_{\eps}}} \aeps(0, \cdot) \phi(0, \cdot) 
\\
&= -\int_{0}^{T} \int_{\Omega_{r_{\eps}}} \aeps \partial_{t} \phi - \int_{\Omega_{r_{\eps}}} \aeps(0, \cdot) \phi(0, \cdot) 
\\
&= -\int_{0}^{T} \int_{\Omega} \Tilde{\aeps} \partial_{t} \phi 
- \int_{\Omega_{r_{\eps}}} \aeps(0, \cdot) \phi(0, \cdot) 
\\
& \rightarrow -\int_{0}^{T} \int_{\Omega} \mathfrak{v}^1 \partial_{t} \phi 
- \int_{\Omega} a_{0} \cdot \phi(0, \cdot)
\end{align*}
where we have used equation \eqref{eq: strong cgnc rapid} and the convergence of $\aeps(0, \cdot)$ to $a_0$. Let us now look at the second integral in the left hand side of equation \eqref{eq: variational formulation rapid}
\begin{align*}
    \int\limits_0^T \int\limits_{\Omega_{r_{\eps}}} \nabla \aeps \cdot \nabla \phi
     = \int\limits_0^T \int\limits_{\Omega_{r_{\eps}}} \nabla E_{\eps} (\aeps) \cdot \nabla \phi
    & = \int\limits_0^T \int\limits_{\Omega} \chi_{\Omega_{r_{\eps}}}\nabla E_{\eps} (\aeps) \cdot \nabla \phi
    \\
    & \to \int\limits_0^T \int\limits_{\Omega} \nabla \mathfrak{v}_1 \cdot \nabla \phi 
\end{align*}
Let us now consider the first term in the right hand side of the equation \eqref{eq: variational formulation rapid} using the convergence as in \eqref{eq: strong cgnc rapid}
\begin{align*}
    \int\limits_0^T \int\limits_{\Omega_{r_{\eps}}} \ceps \phi 
     = \int\limits_0^T \int\limits_{\Omega} \Tilde{\ceps} \phi 
    \to  \int\limits_0^T \int\limits_{\Omega_{r_{\eps}}} \mathfrak{v}_3 \phi 
\end{align*}
Finally we consider the second term from the right hand side of the equation \eqref{eq: variational formulation rapid}
\begin{align*}
\int\limits_0^T \int\limits_{\Omega_{r_{\eps}}} \aeps \beps \phi 
& = \int\limits_0^T \int\limits_{\Omega} \Tilde{\aeps} (\beps- \mathfrak{v}_2) \phi + \int\limits_0^T \int\limits_{\Omega} \Tilde{\aeps} \mathfrak{v}_2 \phi 
\\
& =: \mathcal{V}_1 + \mathcal{V}_2 
\end{align*}
Observe that
\begin{equation*}
\begin{aligned}
   \mathcal{V}_1 
   & \leq \| \beps- \mathfrak{v}_2 \|_{\mathrm L^2((0,T) \times \Omega)} \| \aeps \phi\|_{\mathrm L^2((0,T) \times \Omega)} 
   \\
   & \leq \| \phi\|_{\mathrm L^{\infty}((0,T) \times \Omega)}  \| \beps- \mathfrak{v}_2 \|_{\mathrm L^2((0,T) \times \Omega)} \| \aeps \|_{\mathrm L^2((0,T) \times \Omega)} 
   \\
   & \to 0
   \end{aligned}
\end{equation*}
Similarly we get 
\begin{equation*}
\begin{aligned}
   \mathcal{V}_2
    = \int\limits_0^T \int\limits_{\Omega} \Tilde{\aeps} \mathfrak{v}_2 \phi 
   \to \int\limits_0^T \int\limits_{\Omega} \mathfrak{v}_1 \mathfrak{v}_2 \phi 
   \end{aligned}
\end{equation*}
Combining all the above estimates together we obtain the following variational formulation for equation \eqref{eq: strong cgnc rapid}
\begin{equation*}
    -\int_{0}^{T} \int_{\Omega} \mathfrak{v}^1 \partial_{t} \phi 
    - \int_{\Omega} a_{0} \cdot \phi(0, \cdot) 
    + d_1 \int\limits_0^T \int\limits_{\Omega} \nabla \mathfrak{v}_1 \cdot \nabla \phi
    = \int\limits_0^T \int\limits_{\Omega_{r_{\eps}}} \mathfrak{v}_3 \phi 
    - \int\limits_0^T \int\limits_{\Omega} \mathfrak{v}_1 \mathfrak{v}_2 \phi 
\end{equation*}
which suggests that $\mathfrak{v}_1$ is the weak solution of the following problem
\begin{equation*}
\left\{
    \begin{aligned}
        \dert \mathfrak{v}_1 
        - d_1 \Delta \mathfrak{v}_1 
        & = \mathfrak{v}_3 - \mathfrak{v}_1\mathfrak{v}_2 
        \qquad && \text{in } (0,T) \times \Omega
        \\
        \nabla \mathfrak{v}_1 \cdot n & = 0  \qquad && \text{in } (0,T) \times \partial \Omega
        \\
        \mathfrak{v}_1(0,\cdot) & = a_0 \qquad && \text{in } (0,T) \times \partial \Omega
    \end{aligned}
    \right.
\end{equation*}
Similar calculations holds true for \eqref{eq: beps} and \eqref{eq: ceps} as well. Hence we conclude the proof of Theorem \ref{theorem: rapid main}.


\appendix

\renewcommand{\thesection}{\Alph{section}}

\section{Existence Theory}\label{sec: appendix existence theory}
In this section, we analyze the existence of global-in-time classical nonnegative solution to the three species reaction-diffusion system in the perforated domain $\Omega_{r_\eps}$.
\begin{theorem}\label{existence, uniqueness,regularity, epsilon scale}
    For $i=1,2,3$, consider the system \eqref{eq: aeps}-\eqref{eq: ceps} with $\eps\ll 1$. Then there exists a unique global-in-time classical solution $a^i_{\eps}$ to the system \eqref{eq: aeps}-\eqref{eq: ceps}, satisfying
    \[
    a^i_{\eps}\in \mathrm{W}^{1,p}((0,T); \mathrm{L}^p(\Omega_{r_\eps})) \cap \mathrm{L}^p((0,T); \mathrm{W}^{2,p}(\Omega_{r_\eps})) \cap \mathrm{L}^{\infty}((0,T)\times\Omega_{r_\eps}), 
    \]
    for all $i=1,2,3$, $T\in(0,\infty)$, and $p\in(1,\infty)$ with $p\neq \frac 32,3$. Furthermore, the solution is interior smooth and 
    \[
    a_{\eps}^i \geq 0, \ \forall \, i=1,2,3.
    \]
\end{theorem}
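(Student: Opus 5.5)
The plan is to prove Theorem \ref{existence, uniqueness,regularity, epsilon scale} with the standard three-step strategy for mass-controlled quadratic reaction-diffusion systems on a fixed smooth domain. Here $\eps$ is frozen, so $\Omega_{r_\eps}$ is a bounded domain with smooth boundary $\partial\Omega\cup\Gamma_\eps$, since the holes are disjoint smooth sets inside $\Omega^\delta$. The three steps are local classical existence, nonnegativity via quasi-positivity, and a global a priori $\mathrm L^\infty$ bound that rules out blow-up. Uniqueness follows from the local Lipschitz character of $(a^1,a^2,a^3)\mapsto(a^3-a^1a^2,\,a^1a^2-a^3)$ once solutions are bounded.

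\textbf{Step 1: local existence and regularity.} I will write the system as $\partial_t a - D\Delta a = f(a)$ with inhomogeneous Neumann data $\nabla a\cdot n = g := (\eps\psi_i(t,x,x/\eps))_i$ on $\Gamma_\eps$ and $0$ on $\partial\Omega$. The data satisfy $g\in C^1([0,T];C^1(\partial\Omega_{r_\eps}))$, the initial data lie in $\mathrm W^{2,p}$, and the compatibility condition \eqref{compatibility condition} holds. Maximal $\mathrm L^p$-regularity for the Neumann heat operator (Lady\v{z}enskaja--Solonnikov--Ural'ceva, or Denk--Hieber--Pr\"uss) then gives a contraction in $\mathrm W^{1,p}(\mathrm L^p)\cap \mathrm L^p(\mathrm W^{2,p})$ on a short interval $(0,T^*)$. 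The exclusions $p\neq \frac32,3$ come from the trace spaces, where the compatibility condition must or must not be imposed. For $p>5/2$ this space embeds into $C([0,T^*]\times\overline{\Omega_{r_\eps}})$, and bootstrapping with Schauder estimates gives interior smoothness. The solution extends as long as $\|a(t)\|_{\mathrm L^\infty}$ stays bounded.

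\textbf{Step 2: nonnegativity.} The nonlinearity is quasi-positive: $f_i(a)\ge0$ whenever $a_i=0$ and $a_j\ge0$ for $j\neq i$. Moreover $\psi_i\ge0$ and $a^i_{\eps,0}\ge0$. I will test the $i$-th equation with $(a^i)^-$. The boundary term contributes $-\eps\int_{\Gamma_\eps}\psi_i (a^i)^-\le0$ with the right sign. The reaction term is handled by applying the argument to the modified system with $f(a^+)$ and Gronwall, which gives $\|(a^i)^-(t)\|_2=0$. The modified and original systems then coincide.

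\textbf{Step 3: global bounds (the main obstacle).} Mass control is easy. Adding the equations for $a^1+a^3$ cancels the reactions, and integrating gives $\frac{d}{dt}\int(a^1+a^3)=\eps\int_{\Gamma_\eps}(d_1\psi_1+d_3\psi_3)$, which is bounded. The difficulty is upgrading this to $\mathrm L^\infty$, since the nonlinearity is quadratic and the diffusions differ. I will first use the duality estimate of Pierre (\cite{pierre2010global,desvillettes2015duality}) applied to $\partial_t(a^1+a^3)-\Delta(d_1a^1+d_3a^3)=0$ with bounded Neumann data; this gives $a^1,a^3\in\mathrm L^2(Q_T)$, and likewise for $a^2$. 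The inhomogeneous flux is absorbed into the dual problem via the trace inequality. In three dimensions with quadratic growth, I will then bootstrap via the $\mathrm L^p$-energy method or the improved duality of \cite{desvillettes2015duality} to obtain $a^i\in\mathrm L^{p}(Q_T)$ for some $p>5/2$. Parabolic regularity with $f(a)\in\mathrm L^{p/2}$, $p/2>5/2$, then gives $\mathrm L^\infty$. Alternatively, estimates in the spirit of Proposition \ref{L4 norm estimate}, carried out at fixed $\eps$, give $a^1,a^2\in\mathrm L^\infty(\mathrm L^4)$. In that case the equation for $a^3$ has source $a^1a^2\in\mathrm L^\infty(\mathrm L^2)$, so $a^3\in\mathrm L^\infty$. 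Feeding $a^3\in\mathrm L^\infty$ back into the equations for $a^1,a^2$, where $-a^1a^2\le0$, the comparison principle gives $a^1,a^2\in\mathrm L^\infty$. I expect this second route to be the one I will try first, because it avoids fine duality exponents. Combined with the blow-up alternative of Step 1, this yields global existence and the stated regularity for every $T$.
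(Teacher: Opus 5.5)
Your proposal is correct. Its first two steps match the paper, whose proof is only a set of pointers:
\begin{itemize}
\item maximal $\mathrm L^p$-regularity with inhomogeneous Neumann data and the compatibility condition \eqref{compatibility condition} \cite[Theorem 1]{prussexistence}, which is also where the excluded values $p=\frac32,3$ come from;
\item quasi-positivity for nonnegativity \cite[Lemma 1.1]{pierre2010global};
\item \cite[Theorem 3.5]{pierre2010global} for global existence.
\end{itemize}

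Where you genuinely differ is Step 3, in the version you say you would try first. You do not invoke a global-existence theorem as a black box. Instead you close the $\mathrm L^\infty$ bound by hand, running the paper's own Section~\ref{sec: Uniform estimates} estimates at fixed $\eps$ as a priori bounds on the maximal existence interval. These are the duality bound of Lemma~\ref{L2 integral estimate}, the energy bound of Lemma~\ref{anisotropic norm estimate}, and the $\mathrm L^\infty(\mathrm L^4)$ bound of Proposition~\ref{L4 norm estimate}; the last closes because it has the form $X^2\le C+CX^{3/2}$. Then $a^1a^2\in\mathrm L^\infty((0,T);\mathrm L^2)$ gives $a^3\in\mathrm L^\infty$, since in three dimensions $\|e^{t\Delta}\|_{\mathrm L^2\to\mathrm L^\infty}\lesssim t^{-3/4}$ is integrable near $0$. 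Comparison in $\partial_t a^i-d_i\Delta a^i\le a^3$ then bounds $a^1$ and $a^2$.

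This is legitimate. Those arguments use only the equations, nonnegativity, and functional inequalities on the fixed smooth domain $\Omega_{r_\eps}$, not the global statement being proved, so there is no circularity. Your route buys a self-contained argument that shows where the flux $\eps\psi_i\ge0$ enters: the favourable sign when testing with $(a^i)^-$, and the trace inequality in the duality and energy estimates. The paper leaves all of this to the references. The price is that everything still rests on the duality estimate behind Lemma~\ref{L2 integral estimate}, whose proof the paper omits. You must actually carry it out with the boundary flux absorbed into the dual problem, as you indicate.

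There is one slip in your first alternative. To get $a^1a^2\in\mathrm L^{q}$ with $q>\frac52$ you need $a^i\in\mathrm L^p((0,T)\times\Omega_{r_\eps})$ with $p>5$, not $p>\frac52$. Also, the improved duality estimate by itself only gives $\mathrm L^{2+\eta}$ with small $\eta$ unless the $d_i$ are close. The bootstrap therefore has to exploit the triangular structure, namely $\partial_t a^1-d_1\Delta a^1\le a^3$ together with the cancellation of the reaction terms in $a^1+a^3$. Your second route does exactly this, so the conclusion is unaffected.
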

\begin{proof}
   Combining the facts that
\[
a_{\varepsilon,0}^i \in \mathrm{W}^{2,p}(\Omega_{r_\varepsilon}) \quad \text{and} \quad \psi \in C^1([0,T], B),
\]
where 
\[
B := C^1\big(\overline{\Omega}; C^1_{\#}(Y)\big)
\]
(with $C^1_{\#}(Y)$ denoting the space of $Y$-periodic $C^1$ functions), together with the compatibility condition \eqref{compatibility condition}, we conclude the desired result by following \cite[Theorem 3.5]{pierre2010global} and \cite[Theorem 1]{prussexistence}. We would like to note that the nonnegativity of the solution comes from the quasipositive \eqref{quasipositive} structure of the source term (see \cite[Lemma 1.1]{pierre2010global} for further details).
\end{proof}
For the convenience, we state below the result concerning the nonnegativity of the solution.
\begin{lemma}[Nonnegativity of solutions, \cite{pierre2010global}]\label{positivity of solution}
For $1\le i\le m$, let $\mu_i>0$ be constant. Let $u_i:(0,T)\times\Omega\to\mathbb{R}$ be a classical solution of the system: for $1\le i\le m$
\[
\left\{
\begin{aligned}
\partial_t u_i - \mu_i \Delta u_i &= f_i(u_1,\dots,u_m) && \text{in } (0,T)\times\Omega\\
\nabla u_i\cdot n &= \beta(t,x)\geq 0 && \text{on } (0,T)\times\partial\Omega\\
u_i(0,x) &\ge 0 && \text{in } \Omega.
\end{aligned}
\right.
\]
Assume that $f=(f_1,\dots,f_m):\mathbb{R}^m\to\mathbb{R}^m$ is quasipositive, that is,
\begin{align}\label{quasipositive}
f_i(r_1,\dots,r_{i-1},0,r_{i+1},\dots,r_m) \ge 0,
\quad \forall (r_1,\dots,r_m)\in [0,+\infty)^m.
\end{align}
Then the solution remains nonnegative, namely $u_i\ge 0$ for all $1\le i\le m$.
\end{lemma}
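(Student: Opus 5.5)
The plan is to combine two steps. First, the quasipositive source can be treated as a linear term with a bounded coefficient. Second, a Stampacchia-type truncation shows that the negative parts vanish. Since $u=(u_1,\dots,u_m)$ is a classical solution on $(0,T)\times\Omega$, it is bounded on $[0,T']\times\overline\Omega$ for each $T'<T$. Let $M$ be such a bound. For each $i$, write
\[
f_i(u) = f_i(u_1,\dots,u_{i-1},0,u_{i+1},\dots,u_m) + c_i(t,x)\,u_i,
\]
where $c_i=\int_0^1 \partial_{r_i} f_i(u_1,\dots,s u_i,\dots,u_m)\,ds$. Because $f$ is locally Lipschitz (polynomial in our application), $c_i$ is bounded by a constant $L=L(M)$. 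Quasipositivity only gives the sign of the first term when the other components are nonnegative, which is what we are trying to prove. So I would first replace $f$ by the truncated nonlinearity $\hat f_i(r):=f_i(r_1^+,\dots,r_{i-1}^+,r_i,r_{i+1}^+,\dots,r_m^+)$. Then I will show that the solution of the truncated problem has nonnegative components. Once that holds it solves the original problem, and uniqueness of classical solutions (as in Theorem \ref{existence, uniqueness,regularity, epsilon scale}) identifies it with $u$. Alternatively, one can run the argument directly on $u$ and control the cross terms, as described below.

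The core computation is the following. Multiply the $i$-th equation by $-u_i^-$, where $u_i^-=\max\{-u_i,0\}$, and integrate over $\Omega$. Integrating by parts, the boundary term is
\[
-\mu_i\int_{\partial\Omega}(-u_i^-)\,\beta = \mu_i\int_{\partial\Omega}u_i^-\beta\ \ge 0 .
\]
This term appears on the right-hand side with a favourable sign, so it can be dropped; this is exactly where the hypothesis $\beta\ge0$ enters. The diffusion contributes $\mu_i\int|\nabla u_i^-|^2\ge0$. This gives
\[
\frac12\frac{d}{dt}\int_\Omega (u_i^-)^2 \le -\int_\Omega u_i^- f_i(u).
\]
On the set $\{u_i<0\}$, I would use the decomposition of $f_i$ above. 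With the truncated nonlinearity, $f_i(u_1^+,\dots,0,\dots,u_m^+)\ge0$ by quasipositivity, so its contribution is $\le 0$. The remaining term $-\int u_i^- c_i u_i = \int c_i (u_i^-)^2$ is at most $L\int(u_i^-)^2$.

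If instead one works with $u$ directly, the first term differs from its truncated version by at most $L\sum_{j\ne i}u_j^-$. After Young's inequality this yields $\frac{d}{dt}\sum_i\|u_i^-\|_2^2\le C\sum_i\|u_i^-\|_2^2$. In either route, summing over $i$ gives the same differential inequality. Gronwall's lemma together with $u_i^-(0)=0$ then gives $u_i^-\equiv0$ on $[0,T']$, and since $T'<T$ is arbitrary the result holds on $(0,T)$.

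The main obstacle is justifying this step rigorously. The function $u_i^-$ is only Lipschitz, so I would rely on the standard facts $\nabla u_i^- = -\chi_{\{u_i<0\}}\nabla u_i$ and $\frac{d}{dt}\frac12\|u_i^-\|^2=-\int u_i^-\partial_t u_i$ for $u_i\in W^{1,p}(\mathrm L^p)\cap \mathrm L^p(W^{2,p})$. The other delicate point is the local Lipschitz bound on $f$, which requires the a priori $\mathrm L^\infty$ bound on $u$. For the truncated system this bound is available because the solution is classical on compact time intervals. In our application, $f_i=a^3-a^1a^2$ and $f_3=a^1a^2-a^3$ are quasipositive and $\beta=\eps\psi_i\ge0$, so the lemma applies.
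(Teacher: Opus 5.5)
Your argument is correct, and it does more than the paper does. The paper gives no proof of this lemma; it simply cites \cite[Lemma 1.1]{pierre2010global}. Your direct route is a complete, self-contained proof:
\begin{itemize}
\item test the $i$-th equation with $-u_i^-$;
\item note that the boundary term $\mu_i\int_{\partial\Omega}u_i^-\beta$ sits on the left-hand side with a nonnegative sign and can be dropped (this is exactly where $\beta\ge 0$ is used);
\item on $\{u_i<0\}$, write $f_i(u)=f_i(u_1^+,\dots,u_{i-1}^+,0,u_{i+1}^+,\dots,u_m^+)+R_i$, where the first term is nonnegative by quasipositivity and $|R_i|\le L\bigl(u_i^-+\sum_{j\ne i}u_j^-\bigr)$ by the local Lipschitz bound alone;
\item sum over $i$ and apply Gronwall to $\sum_i\|u_i^-\|_{\mathrm L^2}^2$, using $u_i^-(0)=0$.
\end{itemize}
It also shows explicitly why the nonhomogeneous flux data in the application ($\beta=\eps\psi_i\ge 0$ on $\Gamma_\eps$, $\beta=0$ on $\partial\Omega$) cause no trouble, a point the paper leaves implicit. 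The citation buys brevity; your version shows exactly where each hypothesis enters.

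Two smaller points. First, the local Lipschitz assumption you add is not a convenience. The lemma as transcribed in the paper omits it and is false without it. For $m=1$, the continuous function $f(r)=-\sqrt{|r|}$ is quasipositive. Yet $u(t,x)=-t^2/4$ is a classical solution with $\nabla u\cdot n=0$ and $u(0,\cdot)=0$, since $\partial_t u=-t/2=f(u)$. In the application $f$ is polynomial, so nothing is lost there.

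Second, if you keep the truncation route, uniqueness alone does not close it. The truncated problem is only known to have a local classical solution, so you need a continuation step. On its maximal interval the truncated solution is nonnegative, hence coincides with $u$. It therefore stays bounded by $\sup_{[0,T']\times\overline{\Omega}}|u|$ and cannot cease to exist before $T'$. Your direct route avoids this entirely, so it is the one to present.
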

\begin{remark}\label{trace realization}
    It follows that both $a_{\varepsilon}^i$ and $(a_{\varepsilon}^i)^2$ admit $\mathrm{L}^2$-traces on the boundary $\partial \Omega_{r_\varepsilon}$. Moreover, the time derivative $\partial_t a_{\varepsilon}^i$ also admits a boundary trace, which is understood in the sense of $\mathcal{D}'\big((0,T); \mathrm{W}^{-1/2,2}(\partial \Omega_{r_\varepsilon})\big)$. Hence all the regularity assumption we need for our analysis holds true.
\end{remark}

\section{Extension Operator and its time derivative}\label{sec: appendix extension operator}

In this section, we recall the existence of the following family of extension operators from \cite{franchi2016microscopic, donato_neumann_nonhomogeneous, MR2598093} and construct the time derivative of these operators under certain scenarios. 

\begin{lemma}[see \cite{MR2598093, donato_neumann_nonhomogeneous}]\label{extension: desvilletes}
There exists a family of linear continuous extension operators
\[
E_\eps : \mathrm H^{1}(\Omega_{r_\eps}) \to \mathrm{H}^{1}(\Omega)
\]
and a constant $C_E > 0$, independent of $\eps$, such that
\[
E_\eps v = v \quad \text{in } \Omega_{r_\eps},
\]
and
\begin{equation*}
\begin{aligned}
\|E_\eps v\|_{\mathrm L^{2}(\Omega)} 
& \le C_E \| v\|_{\mathrm L^{2}(\Omega_{r_\eps})},
\\
\|\nabla E_\eps v\|_{\mathrm L^{2}(\Omega)} 
& \le C_E \| \nabla v\|_{\mathrm L^{2}(\Omega_{r_\eps})},
\end{aligned}
\end{equation*}
for each $v \in \mathrm{H}^{1}(\Omega_{r_\eps})$.
\end{lemma}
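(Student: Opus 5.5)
The plan is to build $E_\eps$ cell by cell from a single extension operator on the reference perforated cell, and then rescale. The key point is that the holes are disjoint from $\partial(\eps k+\eps Y)$ and from the security zone $\Omega\setminus\Omega^\delta$. Since $\overline{T}\subset \mathrm{int}(Y)$ is smooth, we can fix a smooth open set $T'$ with $\overline{T}\subset T'\subset\subset \mathrm{int}(Y)$. On the annulus $A=T'\setminus\overline{T}$ we use a standard bounded extension operator $P:\mathrm H^1(A)\to \mathrm H^1(T')$, obtained from Calder\'on--Stein or by reflection across the smooth boundary $\Gamma$. Then for $v\in \mathrm H^1(Y^*)$ we set
\[
Pv := v \ \text{on } Y^*, \qquad Pv := P(v|_A) \ \text{on } T .
\]
This gives $\|Pv\|_{\mathrm H^1(Y)}\le C\|v\|_{\mathrm H^1(Y^*)}$.

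The estimate we want bounds the gradient only by the gradient, so the reference operator must be corrected to be invariant under constants. We define
\[
\hat P v := m + P(v-m), \qquad m=\frac{1}{|A|}\int_A v .
\]
Poincar\'e--Wirtinger on the connected annulus $A$ (here we choose $T'$ so that $A$ is connected) then yields
\[
\|\nabla \hat P v\|_{\mathrm L^2(Y)}\le C\|\nabla v\|_{\mathrm L^2(A)}, \qquad \|\hat P v\|_{\mathrm L^2(Y)}\le C\|v\|_{\mathrm L^2(A)} .
\]
For the second bound we use $|m|\le |A|^{-1/2}\|v\|_{\mathrm L^2(A)}$ together with $\|P w\|_{\mathrm L^2(T)}\le C\|w\|_{\mathrm H^1(A)}$. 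Applied to $w=v-m$, this controls the $\mathrm L^2$ part via Poincar\'e; in fact both bounds only need $v|_A$. The extension modifies $v$ only inside $T'$, and $Pv$ matches $v$ across $\Gamma$ in the trace sense, so the glued function lies in $\mathrm H^1$.

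For $r_\eps=\mathcal O(\eps)$ with $\sigma_\eps=\sigma$ fixed, we apply $\hat P$ in each cell $\eps k+\eps Y$, $k\in\mathcal I_\eps$, through the change of variables $y=(x-\eps k)/\eps$. Gradient norms scale as $\eps^{3/2-1}$ on both sides and $\mathrm L^2$ norms scale as $\eps^{3/2}$, so the constants are $\eps$-independent. Since the extension only changes values inside $\eps k+\eps T'$, which is compactly contained in the cell, the pieces glue across cell interfaces into an $\mathrm H^1(\Omega)$ function. The cells in $\mathcal J_\eps$ and the security zone carry no holes, so there $E_\eps v=v$. Summing the cellwise estimates gives both inequalities, and linearity is clear.

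The main obstacle is uniformity when $\sigma_\eps=r_\eps/\eps\to 0$, the case $\alpha>1$. Here the hole $\sigma_\eps T$ shrinks inside the unit cell. To handle it, we extend at the scale of the hole rather than the scale of the cell: we apply $\hat P$ in the variable $z=(x-\eps k)/r_\eps$ on the dilated annulus $r_\eps(T'\setminus \overline T)$. This is still contained in the cell because $\sigma_\eps\le 1$. Both estimates are dilation invariant in dimension 3 (the $\mathrm L^2$ bound uses only $v|_A$ as above, and the gradient bound is scale invariant by homogeneity). So the same constant $C_E$ works for every $\sigma\in(0,1]$, and the rest of the gluing argument is unchanged.
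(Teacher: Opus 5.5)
Your construction is the classical one behind the references the paper cites; the paper itself gives no proof, only the citation. You extend hole by hole with a fixed reference operator, correct by the mean over a surrounding annulus to get the gradient-by-gradient bound, and rescale at the scale of the hole so that $\sigma_\eps\to 0$ is covered by the same constant. The gradient estimate, the gluing and the scaling are fine. The $\mathrm L^2$ estimate, however, is not proved.

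You obtain $\|\hat P v\|_{\mathrm L^2(T)}\le C\|v\|_{\mathrm L^2(A)}$ from $|m|\le |A|^{-1/2}\|v\|_{\mathrm L^2(A)}$ and $\|Pw\|_{\mathrm L^2(T)}\le C\|w\|_{\mathrm H^1(A)}$ with $w=v-m$. With Poincar\'e--Wirtinger this only gives
\[
\|\hat P v\|_{\mathrm L^2(T)}\le |T|^{1/2}|m|+C\|v-m\|_{\mathrm H^1(A)}\le C\big(\|v\|_{\mathrm L^2(A)}+\|\nabla v\|_{\mathrm L^2(A)}\big),
\]
and the gradient term cannot be removed. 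Rescaling at scale $r_\eps$ and summing then yields only $\|E_\eps v\|_{\mathrm L^2(\Omega)}\le C\big(\|v\|_{\mathrm L^2(\Omega_{r_\eps})}+r_\eps\|\nabla v\|_{\mathrm L^2(\Omega_{r_\eps})}\big)$. For the same reason, the $\mathrm L^2$ bound is dilation invariant only when it is homogeneous ($\mathrm L^2$ on both sides); the fact that it ``only uses $v|_A$'' is not what makes it scale.

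This is not cosmetic, because an $\mathrm H^1$-bounded extension need not be $\mathrm L^2$-bounded. Take the harmonic extension of the trace and $w_n=\max\{0,1-n\dist(y,\Gamma)\}$. Then $\|w_n\|_{\mathrm L^2(A)}\to 0$, while the mean-corrected extension is $\equiv 1$ in $T$. Moreover, the pure $\mathrm L^2$ bound is exactly what the paper uses to extend $E_\eps$ by density to $\mathcal G_\eps$ on $\mathrm L^2(\Omega_{r_\eps})$.

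The missing ingredient is to take the reference operator in $\mathcal L(\mathrm L^2(A),\mathrm L^2(T'))\cap\mathcal L(\mathrm H^1(A),\mathrm H^1(T'))$. This is precisely the hypothesis in the Cioranescu--Saint Jean Paulin extension lemma. Two operators with this property are a first-order reflection across $\Gamma$ in a collar, multiplied by a cut-off equal to $1$ near $\Gamma$, and Stein's universal extension operator. With such a $P$, since $v-m$ is the $\mathrm L^2(A)$-orthogonal projection of $v$ onto mean-zero functions, you get $\|P(v-m)\|_{\mathrm L^2(T)}\le C\|v-m\|_{\mathrm L^2(A)}\le C\|v\|_{\mathrm L^2(A)}$. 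Together with the bound on $|m|$, this gives the homogeneous estimate, and your rescaling then goes through for every $\sigma_\eps\in(0,1]$.

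One small correction: your displayed reference estimates should have $\mathrm L^2(T)$ on the left, not $\mathrm L^2(Y)$. On $Y^*$ you have $\hat Pv=v$, which is not controlled by $v|_A$.
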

\begin{remark}
If $v \in \mathrm L^2((0,T);\mathrm H^1(\Omega_{r_\eps}))$ then $E_{\eps}v \in \mathrm L^2((0,T);\mathrm H^1(\Omega))$.
\end{remark}
Using this extension operator we define a new operator as follows  
\[
\mathcal G_\eps : \mathrm L^{2}(\Omega_{r_\eps}) \to \mathrm{L}^{2}(\Omega)
\]
such that 
\begin{equation}\label{def: Geps}
\mathcal G_\eps\xi(x) = \lim_{n \to \infty} E_{\eps}\phi_n(x),
\end{equation}
where $\{\phi_n\}_{n\ge 1} \subset \mathrm H^{1}(\Omega_{r_\eps})$ is such that 
\[
\| \phi_n - \xi \|_{\mathrm L^2(\Omega_{r_\eps})} \to 0.
\]
Then we have the following Lemma for the operator $\mathcal G_{\eps}$.
\begin{lemma}\label{extension: sdks}
$\mathcal G_{\eps}$ as defined in \eqref{def: Geps} is a well-defined bounded linear operator such that for $v \in \mathrm{L}^{2}(\Omega_{r_\eps})$ 
\[
\mathcal G_{\eps} v = v \quad \text{in } \Omega_{r_\eps},
\]
and
\begin{equation*}
\|\mathcal G_{\eps} v\|_{\mathrm L^{2}(\Omega)} \le C_{g} \| v\|_{\mathrm L^{2}(\Omega_{r_\eps})},
\end{equation*}
where constant $C_{g} > 0$ is independent of $\eps$. Furthermore for $w \in \mathrm H^1(\Omega_{r_\eps})$ 
\begin{equation}\label{extension: Geps=Eeps}
\mathcal G_{\eps} w = E_{\eps} w.
\end{equation}
\end{lemma}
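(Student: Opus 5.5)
The plan is to prove Lemma \ref{extension: sdks} by a density-and-continuity argument. The key input is that the $\mathrm L^2$ bound for $E_\eps$ in Lemma \ref{extension: desvilletes} involves only $\|v\|_{\mathrm L^2(\Omega_{r_\eps})}$, with constant $C_E$ independent of $\eps$. Hence $E_\eps$, restricted to the dense subspace $\mathrm H^1(\Omega_{r_\eps})\subset \mathrm L^2(\Omega_{r_\eps})$, is uniformly continuous for the $\mathrm L^2$ topologies. Here density follows because $C^\infty_c(\Omega_{r_\eps})\subset \mathrm H^1(\Omega_{r_\eps})$ is dense in $\mathrm L^2(\Omega_{r_\eps})$. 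The operator $\mathcal G_\eps$ in \eqref{def: Geps} is then simply the unique continuous extension of $E_\eps$ from this dense subspace.

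\textbf{Step 1: well-definedness.} Fix $\xi\in \mathrm L^2(\Omega_{r_\eps})$ and take any sequence $\phi_n\in \mathrm H^1(\Omega_{r_\eps})$ with $\phi_n\to\xi$ in $\mathrm L^2(\Omega_{r_\eps})$. By linearity of $E_\eps$,
\[
\|E_\eps\phi_n-E_\eps\phi_m\|_{\mathrm L^2(\Omega)}\le C_E\|\phi_n-\phi_m\|_{\mathrm L^2(\Omega_{r_\eps})},
\]
so $(E_\eps\phi_n)$ is Cauchy in $\mathrm L^2(\Omega)$ and converges. The limit does not depend on the chosen sequence. Indeed, if $\phi_n$ and $\phi'_n$ both approximate $\xi$, then the interlaced sequence $\phi_1,\phi'_1,\phi_2,\phi'_2,\dots$ also approximates $\xi$, so its images converge, which forces the two limits to agree. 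Equivalently, $\|E_\eps\phi_n-E_\eps\phi'_n\|\le C_E\|\phi_n-\phi'_n\|\to0$. The limit in \eqref{def: Geps} is therefore understood in $\mathrm L^2(\Omega)$, i.e.\ a.e.\ along a subsequence.

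\textbf{Step 2: linearity and the bound.} Linearity passes to the limit: given $\phi_n\to\xi$ and $\chi_n\to\zeta$, the sequence $a\phi_n+b\chi_n$ approximates $a\xi+b\zeta$. For the bound, pass to the limit in $\|E_\eps\phi_n\|_{\mathrm L^2(\Omega)}\le C_E\|\phi_n\|_{\mathrm L^2(\Omega_{r_\eps})}$ to get $\|\mathcal G_\eps\xi\|_{\mathrm L^2(\Omega)}\le C_E\|\xi\|_{\mathrm L^2(\Omega_{r_\eps})}$. So one may take $C_g=C_E$, which is independent of $\eps$.

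\textbf{Step 3: identities.} To see that $\mathcal G_\eps v=v$ in $\Omega_{r_\eps}$, restrict to $\Omega_{r_\eps}$. There $E_\eps\phi_n=\phi_n$, so the restriction of the $\mathrm L^2(\Omega)$ limit equals the $\mathrm L^2(\Omega_{r_\eps})$ limit of $\phi_n$, namely $v$. For \eqref{extension: Geps=Eeps}, take $w\in \mathrm H^1(\Omega_{r_\eps})$ and the constant sequence $\phi_n=w$; this gives $\mathcal G_\eps w=E_\eps w$, using the independence from the approximating sequence established in Step 1.

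No step is a serious obstacle. The only point needing care is that the $\mathrm L^2$ estimate of Lemma \ref{extension: desvilletes} does not involve the gradient, since that is exactly what makes $E_\eps$ continuous in the $\mathrm L^2$ topology.
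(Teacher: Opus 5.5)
Your proof is correct. The paper states Lemma \ref{extension: sdks} without proof, and the definition \eqref{def: Geps} tacitly presupposes exactly your construction: the unique continuous extension of $E_\eps$ from the dense subspace $\mathrm H^1(\Omega_{r_\eps})$. So you are supplying the missing justification, not departing from a written argument.

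Two of your points are worth keeping explicit.

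First, the limit in \eqref{def: Geps}, which the paper writes pointwise, only makes sense as a limit in $\mathrm L^2(\Omega)$ (hence a.e.\ along a subsequence). Your Step 1 is what makes this limit independent of the approximating sequence.

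Second, everything rests on the gradient-free bound $\|E_\eps v\|_{\mathrm L^2(\Omega)}\le C_E\|v\|_{\mathrm L^2(\Omega_{r_\eps})}$ of Lemma \ref{extension: desvilletes}, which gives $C_g=C_E$. This is a genuine property of the particular operator, not of $\mathrm H^1$-extension operators in general. For example, extending harmonically into the holes (after subtracting cell means) satisfies the gradient estimate but is not $\mathrm L^2$-continuous. To see this, take $\phi_n\to 0$ in $\mathrm L^2(\Omega_{r_\eps})$ supported in shrinking layers around $\Gamma_\eps$ with trace $1$ there; their harmonic extensions equal $1$ in every hole.

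So your argument is complete relative to Lemma \ref{extension: desvilletes} as stated, and the real content lies in that lemma.
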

Using $\mathcal G_{\eps}$, we define the time derivative of $E_{\eps}$ in the following Theorem.
\begin{theorem}\label{extension: time derivative theorem}
    Suppose $u \in \mathrm L^2((0,T);\mathrm H^1(\Omega_{r_\eps}))$ and $\partial_t u \in \mathrm L^2((0,T);\mathrm L^2(\Omega_{r_\eps}))$. Then $ E_{\eps}u$ is weakly differentiable with respect to time and it's weak derivative is given by 
    \[
    \partial_t E_{\eps}u = \mathcal{G}_{\eps}(\partial_t u).
    \]
    Furthermore $\partial_t E_{\eps}u \in \mathrm L^2((0,T);\mathrm L^2(\Omega))$ and
    \begin{equation}\label{extension: time derivative bound}
        \|\partial_t E_\eps u\|_{\mathrm L^2((0,T);\mathrm L^2(\Omega))} \le C_g \| \partial_t u\|_{\mathrm L^2((0,T);\mathrm L^{2}(\Omega_{r_\eps}))},
    \end{equation}
    where constant $C_{g} > 0$ is as mentioned in Lemma \ref{extension: sdks}.
\end{theorem}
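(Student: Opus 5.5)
The natural route is to transfer the time derivative through the extension using linearity and continuity of $E_\eps$ and $\mathcal G_\eps$. Fix $u\in \mathrm L^2((0,T);\mathrm H^1(\Omega_{r_\eps}))$ with $\partial_t u\in \mathrm L^2((0,T);\mathrm L^2(\Omega_{r_\eps}))$. The goal is to show that for every $\varphi\in C^\infty_c(0,T)$ we have, as an identity in $\mathrm L^2(\Omega)$ (Bochner integrals),
\[
\int_0^T E_\eps u(t)\,\varphi'(t)\,dt=-\int_0^T \mathcal G_\eps(\partial_t u(t))\,\varphi(t)\,dt .
\]
Both integrands are Bochner measurable and integrable: $E_\eps$ is bounded from $\mathrm H^1(\Omega_{r_\eps})$ to $\mathrm H^1(\Omega)\hookrightarrow \mathrm L^2(\Omega)$, and $\mathcal G_\eps$ is bounded on $\mathrm L^2$ by Lemma \ref{extension: sdks}. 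Composing with a bounded linear operator preserves strong measurability, and the norms are controlled by $C_E\|u\|_{\mathrm H^1}$ and $C_g\|\partial_t u\|_{\mathrm L^2}$.

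The first key step is the identity
\[
\int_0^T E_\eps u\,\varphi' = E_\eps\Big(\int_0^T u\,\varphi'\Big)=\mathcal G_\eps\Big(\int_0^T u\,\varphi'\Big),
\]
where the integral $\int_0^T u\varphi'$ is taken in $\mathrm H^1(\Omega_{r_\eps})$. The first equality holds because bounded linear operators commute with Bochner integrals. The second is \eqref{extension: Geps=Eeps}.

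The second step uses the definition of the weak derivative of $u$, which is valid in $\mathrm L^2(\Omega_{r_\eps})$:
\[
\int_0^T u\,\varphi'=-\int_0^T \partial_t u\,\varphi .
\]
Here the left-hand side, computed in $\mathrm H^1$, agrees with the one computed in $\mathrm L^2$, by the continuous embedding. Applying $\mathcal G_\eps$ and commuting it with the Bochner integral in $\mathrm L^2(\Omega_{r_\eps})$ then gives $-\int_0^T \mathcal G_\eps(\partial_t u)\varphi$. This proves the weak derivative formula $\partial_t E_\eps u=\mathcal G_\eps(\partial_t u)$.

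The bound \eqref{extension: time derivative bound} then follows immediately. Squaring the pointwise-in-time estimate $\|\mathcal G_\eps(\partial_t u(t))\|_{\mathrm L^2(\Omega)}\le C_g\|\partial_t u(t)\|_{\mathrm L^2(\Omega_{r_\eps})}$ and integrating over $(0,T)$ gives it.

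The main obstacle is the well-posedness of $\mathcal G_\eps$, which Lemma \ref{extension: sdks} lets me assume. Without it, the fact that $E_\eps$ is bounded in the $\mathrm L^2$ norm (Lemma \ref{extension: desvilletes}) would be the essential ingredient, because it is what lets the extension operator act on the merely $\mathrm L^2$-valued derivative. A subsidiary point is that the integral $\int u\varphi'$ does land in $\mathrm H^1$, so that $E_\eps$ may be applied to it. This holds since $u\varphi'$ is Bochner integrable in $\mathrm H^1$.
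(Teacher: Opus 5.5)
Your proof is correct, and it reaches the result by a different mechanism than the paper.

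The paper tests $E_\eps u$ against $\partial_t\phi$ for space--time test functions and replaces $\partial_t\phi$ by a difference quotient. It then shifts time so that the difference falls on $E_\eps u$, and uses linearity to write $h^{-1}\big(E_\eps u(t-h)-E_\eps u(t)\big)=\mathcal G_\eps\big(h^{-1}(u(t-h)-u(t))\big)$. Finally it lets $h\to 0$, using the strong $\mathrm L^2$ convergence of difference quotients on interior time intervals \cite[Proposition 9.3]{brezis2011functional} together with the boundedness of $\mathcal G_\eps$.

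You replace this limit passage by a purely algebraic step. The element $\int_0^T u\varphi'\,dt$ lies in $\mathrm H^1(\Omega_{r_\eps})$, where $E_\eps$ and $\mathcal G_\eps$ coincide, and bounded linear operators commute with Bochner integrals. So the weak-derivative identity for $u$ in $\mathrm L^2(\Omega_{r_\eps})$ is carried over verbatim to $E_\eps u$ in $\mathrm L^2(\Omega)$.

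Both proofs rest on the same two facts: the $\mathrm L^2$-boundedness of $E_\eps$, packaged as $\mathcal G_\eps$, and the identity $\mathcal G_\eps=E_\eps$ on $\mathrm H^1$. Yours is shorter and needs neither interior time cut-offs nor a difference-quotient lemma. It also directly produces the $\mathrm L^2(\Omega)$-valued weak derivative that the later Aubin--Lions argument actually uses. The paper's version stays with scalar space--time integrals, at the cost of that limit passage.

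If you want the paper's formulation with $\phi\in C_c^\infty((0,T)\times\Omega)$, integrate $\tfrac{d}{dt}\big(E_\eps u(t),\phi(t)\big)_{\mathrm L^2(\Omega)}$ over $(0,T)$. Your vector-valued identity in $\mathrm L^2(\Omega)$ covers the holes automatically. In the space--time version, by contrast, the test functions must be supported in all of $\Omega$, not merely in $\Omega_{r_\eps}$, for the statement to say anything about $E_\eps u$ inside the holes.
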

\begin{proof}
    Observe that $u \in \mathrm H^1((0,T);\mathrm L^2(\Omega_{r_\eps}))$ and hence by \cite[Proposition 9.3]{brezis2011functional} we have 
    \begin{equation}\label{brezis: prop 9.3}
    \lim_{h \to 0}\left\| \partial_t u - \frac{u(t+h,\cdot) - u(t,\cdot)}{h} \right\|_{\mathrm L^2((\delta,T-\delta);\mathrm L^2(\Omega_{r_\eps}))} = 0, \ \forall \, 0 <\delta < T/4.
    \end{equation}
    Let $\phi \in \mathrm C_c^{\infty}([t_1,t_2] \times \Omega_{r_\eps})$ be such that $0<t_1<t_2<T$. Testing $E_{\eps} u$ with $ \dert \phi$ gives us
    \begin{align*}
    \int_{0}^{T} \int_{\Omega} E_{\eps} u \cdot \partial_t\phi  
    &= \lim_{h \to 0} \int_{\frac{t_1}{2}}^{T - (\frac{T-t_2}{2})} \int_{\Omega_{r_\eps}} E_{\eps} u \frac{ \phi(t+h, \cdot) - \phi(t, \cdot)}{h} 
    \\
    &= \lim_{h \to 0} \left[ \int_{\frac{t_1}{2} + h}^{T - (\frac{T-t_2}{2}) + h} \int_{\Omega_{r_\eps}} \frac{E_{\eps}u(t-h, \cdot) \phi(t)}{h} - \int_{t_1}^{T 
    - (\frac{T-t_2}{2})} \int_{\Omega_{r_\eps}} \frac{E_{\eps}u(t, \cdot) \phi(t, \cdot)}{h} \, dx dt \right] 
    \\
    &= \lim_{h \to 0} \int_{t_1}^{t_2} \int_{\Omega_{r_\eps}} \frac{E_{\eps}u(t-h, \cdot) \phi(t, \cdot) - E_{\eps}u(t, \cdot) \phi(t, \cdot)}{h} 
    \\
    &= \lim_{h \to 0} \int_{t_1}^{t_2} \int_{\Omega_{r_\eps}} \mathcal{G}_{\eps} \left( \frac{u(t-h, \cdot) - u(t, \cdot)}{h} \right)  \phi(t, \cdot) 
    \\
    &= \int_{t_1}^{t_2} \int_{\Omega_{r_\eps}} \mathcal{G}_{\eps} \left( \lim_{h \to 0} \frac{u(t-h, \cdot) - u(t, \cdot)}{h} \right) \phi(t, \cdot) 
    \\
    &= -\int_{t_1}^{t_2} \int_{\Omega} \mathcal{G}_{\eps} \left( \frac{\partial u}{\partial t} \right)  \phi(t, \cdot),
\end{align*}
where we have used Dominated Convergence Theorem for first and fifth steps. Thus in the sense of distribution 
\[
\partial_t E_{\eps}(u) = \mathcal{G}_{\eps} \left( \partial_t u \right).
\]
Hence invoking Lemma \ref{extension: sdks} yields inequality \eqref{extension: time derivative bound}.
\end{proof}

\section{Trace Inequality and Anisotropic Sobolev Inequality}\label{sec: appendix trace theory and anisotropic sobolev}
In this section, we recall the Trace inequality (see \cite{AllaireDamlamianHornung1996, desvillettes2018homogenization, franchi2016microscopic, MR2598093, MR2908615}) and the Anisotropic Sobolev Inequality (see \cite{franchi2016microscopic, desvillettes2018homogenization}).

\begin{lemma}[see \cite{AllaireDamlamianHornung1996, MR2598093, MR2908615}]\label{app: trace theory} 
Let $r_{\eps} =  \mathcal{O}(\eps)$. Then there exists a constant $C_{tr} > 0$ which does not depend on $\eps$, such that when $v \in Lip(\Omega_{r_\eps})$, then
\begin{equation*}
\|v\|^2_{\mathrm L^2(\Gamma_{r_\eps})} \le C_{tr} \left[ \frac{1}{\eps} \int_{\Omega_{r_\eps}} |v|^2 \, dx + \eps \int_{\Omega_{r_\eps}} |\nabla_x v|^2 \, dx \right].
\end{equation*}
\end{lemma}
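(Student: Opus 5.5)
The trace inequality follows by scaling: prove a fixed estimate on the reference perforated cell, transport it to each $\eps$-cell containing a hole, and sum. Since $r_\eps=\mathcal{O}(\eps)$, write $r_\eps=\sigma\eps$ with $\sigma$ fixed (or bounded above and below, which changes only constants). Then each hole boundary $\eps k+r_\eps\Gamma$, $k\in\mathcal I_\eps$, sits inside the cell $\eps k+\eps Y$. The part of that cell lying in $\Omega_{r_\eps}$ is exactly $\eps k+\eps(Y\setminus\sigma\overline T)=\eps k+\eps Y^*_\sigma$. These cells are pairwise disjoint and contained in $\Omega_{r_\eps}$ because of the security zone. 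So it suffices to prove the estimate cell by cell and add.

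On the reference cell $Y^*_\sigma=Y\setminus\sigma\overline T$ I would use the standard trace theorem. This domain is bounded and Lipschitz, since $\sigma T$ has smooth boundary compactly contained in $\mathrm{int}(Y)$. Hence there is $C_0$, depending only on $Y$, $T$ and $\sigma$, such that
\[
\int_{\sigma\Gamma}|w|^2\,ds(y)\le C_0\Big(\int_{Y^*_\sigma}|w|^2\,dy+\int_{Y^*_\sigma}|\nabla_y w|^2\,dy\Big)\qquad\forall\,w\in \mathrm{H}^1(Y^*_\sigma).
\]
It is stated for Lipschitz $w$, but density covers $\mathrm H^1$.

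Next, rescale. For $v\in Lip(\Omega_{r_\eps})$ and $k\in\mathcal I_\eps$, set $w(y):=v(\eps k+\eps y)$. The change of variables $x=\eps k+\eps y$ gives
\[
ds(x)=\eps^{2}ds(y),\qquad dx=\eps^3dy,\qquad \nabla_y w=\eps\nabla_x v .
\]
Substituting into the reference inequality yields
\[
\int_{\eps k+r_\eps\Gamma}|v|^2\,ds(x)=\eps^2\int_{\sigma\Gamma}|w|^2
\le C_0\eps^2\Big(\eps^{-3}\int_{\eps k+\eps Y^*_\sigma}|v|^2\,dx+\eps^{-1}\int_{\eps k+\eps Y^*_\sigma}|\nabla_x v|^2\,dx\Big).
\]
The right-hand side equals $C_0\big(\eps^{-1}\int|v|^2+\eps\int|\nabla v|^2\big)$ over the cell. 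Summing over $k\in\mathcal I_\eps$ and using disjointness of the cells inside $\Omega_{r_\eps}$ gives the claim with $C_{tr}=C_0$.

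The main obstacle is uniformity of $C_0$ in $\eps$. If $r_\eps/\eps$ is exactly constant, this is immediate. If it only satisfies $c_1\le r_\eps/\eps\le c_2<\tfrac12\,\mathrm{dist}$-admissible bounds, the family of reference domains $Y\setminus\sigma\overline T$, $\sigma\in[c_1,c_2]$, must admit a uniform trace constant. I would obtain this by a fixed bi-Lipschitz map of $Y$ onto itself that is the identity near $\partial Y$ and sends $\sigma T$ to $c_1T$, with Lipschitz constants controlled uniformly in $\sigma$. Such a map exists by radial dilation about an interior point if $T$ is star-shaped, and by a smooth flow argument in general. Pulling back the trace inequality on $Y\setminus c_1\overline T$ then gives a constant independent of $\sigma$.
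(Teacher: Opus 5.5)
Your proof is correct and is the standard argument: a trace inequality on the reference cell $Y\setminus\sigma\overline{T}$, the change of variables $x=\eps k+\eps y$ (which produces the factors $\eps^{-1}$ and $\eps$), and summation over the pairwise disjoint cells $k\in\mathcal I_\eps$, which lie inside $\Omega_{r_\eps}$ for small $\eps$ thanks to the security zone. The paper gives no proof of its own and only cites Allaire--Damlamian--Hornung and related works, which use exactly this scaling argument; your extra paragraph on uniformity in $\sigma$ is needed only if $r_\eps/\eps$ is allowed to vary, whereas in the paper's setting it is effectively a fixed constant.
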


\begin{lemma}
[
see \cite{donato_neumann_nonhomogeneous}
]
\label{app: rapid trace theory} 
Let $r_{\eps} =  \mathcal{O}(\eps^{\alpha})$ for $\alpha>1$. Then there exists a constant $C_{tr} > 0$ which does not depend on $\eps$, such that when $v \in \mathrm{H}^1(\Omega_{r_\eps})$, then
\begin{equation*}
\|v\|^2_{\mathrm L^2(\Gamma_{r_\eps})}
\le C_{tr}
\left\{
\begin{aligned}
& \left( \frac{r_{\eps}}{\eps^{\frac{3}{2}}}\right)^{2} \|v\|^2_{\mathrm H^1(\Omega_{r_\eps})}
\qquad 1 <\alpha \leq 3,
\\
& r_{\eps} \|v\|^2_{\mathrm H^1(\Omega_{r_\eps})} \qquad \alpha >3.
\end{aligned}
\right.
\end{equation*}
\end{lemma}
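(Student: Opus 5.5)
The plan is to reduce to a single hole by scaling and then sum over the holes $k\in\mathcal I_\eps$. Let $\sigma_\eps = r_\eps/\eps\to 0$ (since $\alpha>1$). Around each hole $\eps k + r_\eps T$, fix a reference ball $B_R\supset \overline T$ with $R$ chosen so that $r_\eps B_R\subset \eps k+\eps Y$ for small $\eps$. Using the scaling $x=\eps k + r_\eps z$, the standard trace inequality on the fixed annular region $A:=B_R\setminus\overline T$ reads
\[
\int_{\Gamma}|w|^2\,ds_z\le C\Big(\int_A |w|^2\,dz+\int_A|\nabla_z w|^2\,dz\Big),
\]
and it transforms into
\[
\int_{\eps k+r_\eps\Gamma}|v|^2\,ds_x\le C\Big(r_\eps^{-1}\int_{\eps k+r_\eps A}|v|^2\,dx+r_\eps\int_{\eps k+r_\eps A}|\nabla v|^2\,dx\Big).
\]
The factor $r_\eps^{-1}$ in front of the $\mathrm L^2$ term is too large to be useful if we sum directly, so the first term has to be improved.

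For the $\mathrm L^2$ term, I control $v$ on the small annulus by its values on the whole cell $\eps k+\eps Y^*_\eps$. I take the cell average $m_k$ of $v$ over $\eps k+\eps(Y\setminus\sigma_\eps\overline T)$ and write $v = m_k+(v-m_k)$.
\begin{itemize}
\item The constant part contributes $|m_k|^2 |r_\eps\Gamma|\lesssim r_\eps^2\,\eps^{-3}\int_{\text{cell}}|v|^2$.
\item For the fluctuation, in three dimensions I use the Sobolev-type estimate on the perforated cell, whose constant is uniform since $\sigma_\eps\to 0$ and the holes shrink. This gives $\|v-m_k\|_{\mathrm L^6(\text{cell})}\lesssim\|\nabla v\|_{\mathrm L^2(\text{cell})}$. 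Hölder on the annulus of volume $\sim r_\eps^3$ then yields $r_\eps^{-1}\int_{r_\eps A}|v-m_k|^2\lesssim r_\eps^{-1} r_\eps^{2}\|\nabla v\|^2_{\mathrm L^2(\text{cell})} = r_\eps\|\nabla v\|^2$.
\end{itemize}
Per hole, this gives
\[
\int_{\eps k+r_\eps\Gamma}|v|^2\lesssim \frac{r_\eps^2}{\eps^3}\|v\|^2_{\mathrm L^2(\text{cell})}+r_\eps\|\nabla v\|^2_{\mathrm L^2(\text{cell})}.
\]

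Summing over the disjoint cells $k\in\mathcal I_\eps$ gives $\|v\|^2_{\mathrm L^2(\Gamma_{r_\eps})}\le C\big((r_\eps/\eps^{3/2})^2+r_\eps\big)\|v\|^2_{\mathrm H^1(\Omega_{r_\eps})}$. Comparing the two exponents, $r_\eps^2\eps^{-3}=\eps^{2\alpha-3}$ and $r_\eps=\eps^\alpha$: for $\alpha\le3$ we have $2\alpha-3\le\alpha$, so the first term dominates, while for $\alpha>3$ the second dominates. This matches the two cases in the statement.

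I expect the main obstacle to be the uniformity, with respect to $\sigma_\eps$, of the Poincaré–Sobolev constant on the perforated cell $Y\setminus\sigma_\eps\overline T$. My first approach would be to extend from $Y\setminus\sigma_\eps \overline T$ to $Y$ using the scaled extension on $B_{R\sigma_\eps}\setminus\sigma_\eps\overline T$, whose constant is scale-invariant for the gradient. I would then apply the Sobolev–Poincaré inequality on the full cube $Y$ and rescale by $\eps$, keeping track of the powers of $\eps$ in the rescaling.
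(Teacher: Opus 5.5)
Your proof is correct. The paper gives no argument for this lemma; it only quotes \cite{donato_neumann_nonhomogeneous}. So there is no in-paper route to compare with, and what you supply is a self-contained derivation of the estimate behind both cases:
\[
\|v\|^2_{\mathrm L^2(\Gamma_{r_\eps})}\le C\Big(\frac{r_\eps^2}{\eps^3}\,\|v\|^2_{\mathrm L^2(\Omega_{r_\eps})}+r_\eps\,\|\nabla v\|^2_{\mathrm L^2(\Omega_{r_\eps})}\Big),
\]
valid for any $r_\eps$ with $r_\eps/\eps\to0$.

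The decisive step is the one you identify: subtract the cell mean, then apply H\"older on the annulus $\eps k+r_\eps A$, whose volume is of order $r_\eps^3$, combined with the $\mathrm L^6$ Sobolev--Poincar\'e inequality. This turns the bad factor $r_\eps^{-1}$ into $r_\eps$. A plain Poincar\'e inequality on the cell would only give $r_\eps^{-1}\eps^2$, which blows up.

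Your proposed fix for uniformity works. In dimension three, $\|u-\bar u\|_{\mathrm L^6}\le C\|\nabla u\|_{\mathrm L^2}$ is dilation invariant. The hole-filling extension $Pu=\bar u_A+E(u-\bar u_A)$ on $A=B_R\setminus\overline T$ (using Poincar\'e--Wirtinger on the connected set $A$) satisfies $\|\nabla Pu\|_{\mathrm L^2(B_R)}\le C\|\nabla u\|_{\mathrm L^2(A)}$, and this bound is also dilation invariant. Rescaling by $\sigma_\eps$ and gluing to $u$ outside $\sigma_\eps B_R$ therefore reduces everything to the fixed cube $Y$. Replacing the cube average of the extension by your perforated-cell average $m_k$ costs at most a factor $2$ in $\mathrm L^6$.

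One point should be made explicit: the final case split uses $r_\eps\asymp\eps^\alpha$ in both directions. For $\alpha\le3$ you need $r_\eps\gtrsim\eps^3$ in order to absorb $r_\eps\|\nabla v\|^2$ into $(r_\eps/\eps^{3/2})^2\|v\|^2_{\mathrm H^1}$. This requirement cannot be dropped. Take $v=\sum_k\phi\big((x-\eps k)/r_\eps\big)$ with $\phi\in C^\infty_c(B_R)$ and $\phi=1$ on $\overline T$; this test function shows the best constant is at least of order $r_\eps$.
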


\begin{lemma}[Anisotropic Sobolev inequalities in perforated domains, see \cite{franchi2016microscopic}]\label{app: anisotropic sobolev ineq}  
\begin{itemize}
\item[(i).] For $q_1$ and $r_1$ satisfying the conditions
\begin{equation}
\begin{cases}
\dfrac{1}{r_1} + \dfrac{3}{2q_1} = \dfrac{3}{4}, \\[10pt]
r_1 \in [2, \infty], \ q_1 \in [2, 6],
\end{cases} 
\end{equation}
the following estimate holds $($for $v \in \mathrm{ H}^1((0, T); L^2(\Omega_{r_\eps})) \cap L^2((0, T); \mathrm{ H}^1(\Omega_{r_\eps}))):$
\begin{equation}
\|v\|_{L^{r_1}((0, T); L^{q_1}(\Omega_{r_\eps}))} \le C_{AS} \|v\|_{Q_{r_\eps}(T)},
\end{equation}
where $C_{AS} > 0$ is independent of $\eps$ and
\begin{equation}
\|v\|_{Q_{r_\eps}(T)}^2 := \sup_{0 \le t \le T} \int_{\Omega_{r_\eps}} |v(t)|^2 \, dx + \int_0^T dt \int_{\Omega_{r_\eps}} |\nabla v(t)|^2 \, dx. 
\end{equation}
\end{itemize}
\end{lemma}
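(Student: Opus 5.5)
This is a uniform-in-$\eps$ parabolic interpolation inequality on $\Omega_{r_\eps}$. The plan is to reduce it to the classical one on the fixed domain $\Omega$ through the extension operator of Lemma \ref{extension: desvilletes}. For $v$ in the stated class, set $w:=E_\eps v$ for a.e. $t$. By linearity and Lemma \ref{extension: desvilletes},
\[
\|w(t)\|_{\mathrm L^2(\Omega)}\le C_E\|v(t)\|_{\mathrm L^2(\Omega_{r_\eps})},\qquad \|\nabla w(t)\|_{\mathrm L^2(\Omega)}\le C_E\|\nabla v(t)\|_{\mathrm L^2(\Omega_{r_\eps})},
\]
with $C_E$ independent of $\eps$. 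Hence
\[
\sup_t\|w(t)\|^2_{\mathrm L^2(\Omega)}+\int_0^T\|\nabla w\|^2_{\mathrm L^2(\Omega)}\le C_E^2\|v\|^2_{Q_{r_\eps}(T)}.
\]
Measurability in time of $w$ follows from linearity and continuity of $E_\eps$. The pointwise-in-time sup bound is justified because $v\in \mathrm H^1((0,T);\mathrm L^2)$ is continuous into $\mathrm L^2$, and $E_\eps$ is bounded in the $\mathrm L^2$ norm on $\mathrm H^1$ functions.

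Next, on the fixed smooth bounded domain $\Omega\subset\RR^3$, apply the Gagliardo--Nirenberg inequality
\[
\|w\|_{\mathrm L^{q_1}(\Omega)}\le C\|w\|_{\mathrm L^2(\Omega)}^{1-\lambda}\|w\|_{\mathrm H^1(\Omega)}^{\lambda},\qquad \lambda=3\Big(\tfrac12-\tfrac1{q_1}\Big)\in[0,1],
\]
valid for $q_1\in[2,6]$. The constraint $\frac1{r_1}+\frac{3}{2q_1}=\frac34$ is exactly $\lambda r_1=2$. Raising to the power $r_1$ and integrating in time gives
\[
\int_0^T\|w\|^{r_1}_{\mathrm L^{q_1}}\le C\big(\sup_t\|w\|_{\mathrm L^2}\big)^{r_1-2}\int_0^T\big(\|w\|^2_{\mathrm L^2}+\|\nabla w\|^2_{\mathrm L^2}\big).
\]
The zeroth-order term is bounded by $T\sup_t\|w\|^2_{\mathrm L^2}$. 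Therefore $\|w\|_{\mathrm L^{r_1}(\mathrm L^{q_1}(\Omega))}\le C(T)\big(\sup_t\|w\|^2+\int_0^T\|\nabla w\|^2\big)^{1/2}$. The endpoint $r_1=\infty$, $q_1=2$ is trivial, and $r_1=2$, $q_1=6$ is Sobolev embedding integrated in time.

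Finally, since $w=v$ on $\Omega_{r_\eps}$, we have $\|v\|_{\mathrm L^{r_1}(\mathrm L^{q_1}(\Omega_{r_\eps}))}\le\|w\|_{\mathrm L^{r_1}(\mathrm L^{q_1}(\Omega))}$. Combining with the first step yields the claim with $C_{AS}=C(T)C_E$, independent of $\eps$.

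The only delicate point is the uniformity of the constants. For the Sobolev step it is automatic, since that step happens on the fixed domain $\Omega$. For the extension step it rests entirely on the $\eps$-independence of $C_E$, which must hold in both regimes $r_\eps=\mathcal O(\eps)$ and $\mathcal O(\eps^\alpha)$ and which we take from Lemma \ref{extension: desvilletes}. A secondary subtlety is that the inhomogeneous $\mathrm H^1$ norm must appear in Gagliardo--Nirenberg on a bounded domain. This is why the constant depends on $T$, which is harmless since $T$ is fixed.
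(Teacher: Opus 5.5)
Your argument is correct and follows the route the paper intends. The paper writes out no proof; it cites \cite{franchi2016microscopic} and remarks that the inequality rests on the $\eps$-uniform extension operator of Lemma~\ref{extension: desvilletes}. That is exactly your strategy: extend by $E_\eps$, apply the classical parabolic Gagliardo--Nirenberg (Ladyzhenskaya-type) interpolation on the fixed domain $\Omega$ using the identity $\lambda r_1=2$, treat the endpoints separately, and restrict back to $\Omega_{r_\eps}$. This yields an $\eps$-independent (though $T$-dependent) constant $C_{AS}=C(T)\,C_E$.
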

\begin{remark}
Proof of Anisotropic Sobolev Inequality depends upon the existence of Extension operator which exists thanks to Appendix \ref{sec: appendix extension operator}. Hence we get that Anisotropic Sobolev Inequality holds true for the case when $\alpha >1$.   
\end{remark}

\section{Two-Scale Convergence}\label{sec: appendix two-scale}
In this section, we gather some useful results on two-scale convergence (see \cite{allaire_2scale, AllaireDamlamianHornung1996, Nguetseng_2scale, nand_neumann, MR1670679, hornung1992applications})

\begin{defn}[see \cite{allaire_2scale}]
A sequence of functions $v^\eps$ in $\mathrm L^2((0, T); \mathrm L^2(\Omega))$ two-scale converges to $v_0 \in \mathrm L^2((0, T); \mathrm L^2(\Omega \times Y))$ if
\begin{equation*}
\lim_{\eps \to 0} \int_0^T \int_\Omega v^\eps(t, x) \, \phi\left(t, x, \frac{x}{\eps}\right) dt \, dx = \int_0^T \int_\Omega \int_Y v_0(t, x, y) \, \phi(t, x, y) \, dt \, dx \, dy
\end{equation*}
for all $\phi \in C^1([0, T] \times \overline{\Omega}; C^\infty_\#(Y))$.
\end{defn}

We recall the following classical Proposition:

\begin{prop} \label{app: 2scale}
If $v^\eps$ is a bounded sequence in $L^2((0, T) \times \Omega)$, then there exists a function $v_0 := v_0(t, x, y)$ in $L^2((0, T) \times \Omega \times Y)$ such that, up to a subsequence, $v^\eps$ two-scale converges to $v_0$.
\end{prop}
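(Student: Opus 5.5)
The statement is the classical compactness theorem for two-scale convergence (Nguetseng, Allaire), with an extra time variable. The plan is to prove it by a duality and diagonal argument.

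\textbf{Step 1: the functionals.} Set $\mathcal{X} := C([0,T]\times\overline{\Omega}; C_\#(Y))$, a separable Banach space under the sup norm. For each $\eps$ define the linear functional
\[
\mu_\eps(\phi) := \int_0^T\int_\Omega v^\eps(t,x)\,\phi\Big(t,x,\frac{x}{\eps}\Big)\,dt\,dx .
\]
By Cauchy--Schwarz,
\[
|\mu_\eps(\phi)| \le \|v^\eps\|_{\mathrm L^2((0,T)\times\Omega)} \,\Big\|\phi\Big(\cdot,\cdot,\frac{\cdot}{\eps}\Big)\Big\|_{\mathrm L^2((0,T)\times\Omega)} .
\]

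\textbf{Step 2: the oscillating-test-function lemma.} The key fact is that, for $\phi\in\mathcal{X}$,
\[
\lim_{\eps\to0}\Big\|\phi\Big(\cdot,\cdot,\frac{\cdot}{\eps}\Big)\Big\|^2_{\mathrm L^2((0,T)\times\Omega)} = \|\phi\|^2_{\mathrm L^2((0,T)\times\Omega\times Y)} .
\]
I would prove this by first taking $\phi$ a finite sum $\sum_k \alpha_k(t,x)\beta_k(y)$ with $\alpha_k$ continuous and $\beta_k$ periodic. In that case it reduces to the weak-$*$ convergence $\beta(\cdot/\eps)\rightharpoonup \int_Y\beta$ in $\mathrm L^\infty(\Omega)$, which follows from covering $\Omega$ by $\eps$-cells (the boundary cells are negligible) and uniform continuity. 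General $\phi$ is then handled by density in the sup norm. Together with Step 1, this gives $\limsup_\eps|\mu_\eps(\phi)| \le C\|\phi\|_{\mathrm L^2((0,T)\times\Omega\times Y)}$.

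\textbf{Step 3: extraction of a limit.} Since $\mathcal{X}$ is separable and $\mu_\eps$ is bounded in $\mathcal{X}'$, a diagonal argument over a countable dense subset yields a subsequence with $\mu_\eps(\phi)\to\mu(\phi)$ for every $\phi\in\mathcal{X}$. Passing to the limit in the bound of Step 2 gives
\[
|\mu(\phi)|\le C\|\phi\|_{\mathrm L^2((0,T)\times\Omega\times Y)} .
\]
So $\mu$ extends continuously to $\mathrm L^2((0,T)\times\Omega\times Y)$, using the density of $\mathcal{X}$ there. By Riesz representation there is $v_0\in \mathrm L^2((0,T)\times\Omega\times Y)$ with $\mu(\phi)=\int_0^T\int_\Omega\int_Y v_0\,\phi$. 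Since $C^1([0,T]\times\overline\Omega;C^\infty_\#(Y))\subset\mathcal{X}$, the defining identity holds for all admissible test functions, and $v^\eps$ two-scale converges to $v_0$.

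\textbf{Main obstacle.} The expected main difficulty is Step 2: showing that oscillating test functions are measurable and converge in the $\mathrm L^2$ sense. Restricting to continuous $\phi$, and using uniform continuity on the $\eps$-cells, is the first thing I would try.
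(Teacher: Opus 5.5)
Your proposal is correct. It is the classical Nguetseng--Allaire compactness argument, with the time variable carried along as a parameter: duality on the separable space $C([0,T]\times\overline{\Omega};C_\#(Y))$, the mean-value lemma $\|\phi(\cdot,\cdot,\cdot/\eps)\|_{\mathrm L^2}\to\|\phi\|_{\mathrm L^2((0,T)\times\Omega\times Y)}$, diagonal extraction, and Riesz representation. The paper does not prove this statement itself but recalls it from Allaire and Nguetseng, whose proof is the one you wrote.
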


Then, the following Proposition is useful for obtaining the limit of the product of two two-scale convergent sequences.

\begin{prop} \label{app: 2scale product}
Let $v^\eps$ be a sequence of functions in $L^2((0, T) \times \Omega)$ which two-scale converges to a limit $v_0 \in L^2((0, T) \times \Omega \times Y)$. Suppose furthermore that
\begin{equation*}
\lim_{\eps \to 0} \int_0^T \int_\Omega |v^\eps(t, x)|^2 \, dt \, dx = \int_0^T \int_\Omega \int_Y |v_0(t, x, y)|^2 \, dt \, dx \, dy. 
\end{equation*}
Then, for any sequence $w^\eps$ in $L^2((0, T) \times \Omega)$ that two-scale converges to a limit $w_0 \in L^2((0, T) \times \Omega \times Y)$, we get the limit
\begin{align*}
\lim_{\eps \to 0} \int_0^T \int_\Omega v^\eps(t, x) \, w^\eps(t, x) &\, \phi\left(t, x, \frac{x}{\eps}\right) dt \, dx  \\
&= \int_0^T \int_\Omega \int_Y v_0(t, x, y) \, w_0(t, x, y) \, \phi(t, x, y) \, dt \, dx \, dy, 
\end{align*}
for all $\phi \in C^1([0, T] \times \overline{\Omega}; C^\infty_\#(Y))$.
\end{prop}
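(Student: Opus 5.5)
The plan is to follow Allaire's classical argument for \emph{strong} two-scale convergence: approximate the limit $v_0$ by smooth admissible oscillating functions, and use the norm hypothesis to show that $v^\eps$ is close in $L^2$ to these oscillating approximants.

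\textbf{Step 1: boundedness of $w^\eps$.} Since $w^\eps$ two-scale converges, the linear functionals $\phi\mapsto \int_0^T\!\!\int_\Omega w^\eps\,\phi(t,x,x/\eps)$ converge for every admissible $\phi$. I would apply the uniform boundedness principle on the Banach space $L^2((0,T)\times\Omega; C_\#(Y))$. Every admissible $\phi$ lies in this space and satisfies $\|\phi(\cdot,\cdot,\cdot/\eps)\|_{L^2}\le\|\phi\|_{L^2(C_\#)}$. This gives $\sup_\eps\|w^\eps\|_{L^2((0,T)\times\Omega)}=:M<\infty$. Alternatively, in the applications of this paper, $w^\eps$ is bounded by construction, and one may simply add this as a standing assumption.

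\textbf{Step 2: approximation of $v_0$.} Fix $\eta>0$. Since $C^1([0,T]\times\overline\Omega;C^\infty_\#(Y))$ is dense in $L^2((0,T)\times\Omega\times Y)$, pick $\psi$ in this class with $\|v_0-\psi\|_{L^2((0,T)\times\Omega\times Y)}\le\eta$. Write $\psi^\eps(t,x):=\psi(t,x,x/\eps)$ and expand
\[
\int_0^T\!\!\int_\Omega |v^\eps-\psi^\eps|^2=\int_0^T\!\!\int_\Omega |v^\eps|^2-2\int_0^T\!\!\int_\Omega v^\eps\psi^\eps+\int_0^T\!\!\int_\Omega|\psi^\eps|^2 .
\]
The three terms pass to the limit as follows.
\begin{itemize}
\item The first term converges to $\|v_0\|^2$ by the norm hypothesis.
\item The second converges to $-2\int v_0\psi$ by the definition of two-scale convergence, with $\psi$ as test function.
\item The third converges to $\|\psi\|^2$ by the classical mean-value property of admissible functions. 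This amounts to testing the constant sequence $1$ against $\psi^2$, which is again admissible.
\end{itemize}
Hence $\limsup_{\eps\to0}\|v^\eps-\psi^\eps\|_{L^2}=\|v_0-\psi\|\le\eta$.

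\textbf{Step 3: splitting the product.} Write
\[
\int_0^T\!\!\int_\Omega v^\eps w^\eps\phi^\eps=\int_0^T\!\!\int_\Omega (v^\eps-\psi^\eps)w^\eps\phi^\eps+\int_0^T\!\!\int_\Omega w^\eps\,(\psi\phi)^\eps .
\]
\begin{itemize}
\item The first term is bounded by $\|\phi\|_\infty M\|v^\eps-\psi^\eps\|$, so its $\limsup$ is at most $\|\phi\|_\infty M\eta$.
\item Since $\psi\phi$ is again admissible, the second term tends to $\int\!\!\int\!\!\int w_0\psi\phi$ by two-scale convergence of $w^\eps$.
\item Finally, $\bigl|\int w_0\psi\phi-\int w_0 v_0\phi\bigr|\le\|\phi\|_\infty\|w_0\|\eta$.
\end{itemize}
Combining these, the $\limsup$ and $\liminf$ of the left-hand side both lie within $C\eta$ of $\int\!\!\int\!\!\int v_0w_0\phi$. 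Letting $\eta\to0$ concludes the proof.

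\textbf{Main obstacle.} The delicate step is Step 2. It is the only place where the norm hypothesis is used, and it relies on the limit $\int|\psi^\eps|^2\to\int\!\!\int\!\!\int|\psi|^2$ for admissible $\psi$. That limit is where the regularity of the test class, namely continuity in $(t,x)$ and periodicity in $y$, is essential. I would justify it by the standard Riemann–Lebesgue-type lemma for $Y$-periodic functions: on each $\eps$-cell, $\psi(t,x,\cdot)$ is nearly constant in $x$, and cells meeting $\partial\Omega$ have vanishing total measure.
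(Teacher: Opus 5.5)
The gap is Step 1. Your Steps 2 and 3 are the classical Allaire argument for strong two-scale convergence; the paper only cites this result and does not prove it. In that argument, the norm hypothesis gives $\limsup_{\eps\to0}\|v^\eps-\psi^\eps\|_{L^2}\le\|v_0-\psi\|$, and the error term $\int(v^\eps-\psi^\eps)w^\eps\phi^\eps$ is absorbed using $\sup_\eps\|w^\eps\|_{L^2}<\infty$. You derive that bound from Banach--Steinhaus, but the theorem requires $\sup_\eps|T_\eps(\phi)|<\infty$ for every $\phi$ in the Banach space $L^2((0,T)\times\Omega;C_\#(Y))$, or at least on a set of second category. The paper's definition of two-scale convergence only gives convergence of $T_\eps(\phi)$ for $\phi\in C^1([0,T]\times\overline{\Omega};C^\infty_\#(Y))$, which is a dense subspace of first category. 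So no uniform bound on $\|T_\eps\|$, and hence none on $\|w^\eps\|_{L^2}$, follows.

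In fact the implication is false, so the bound is a genuine extra hypothesis. Let $w^\eps(t,x)=\eps^{-1}\sin(x_1/\eps^2)$. Write $\sin(x_1/\eps^2)=-\eps^2\partial_{x_1}\cos(x_1/\eps^2)$ and integrate by parts in $x_1$. Then integrate by parts once more on the resulting term $\eps^{-1}(\partial_{y_1}\phi)(t,x,x/\eps)$; this is allowed because $\partial_{y_1}\phi$ is still $C^1$ in $x$. This gives $\int_\Omega\sin(x_1/\eps^2)\phi(t,x,x/\eps)\,dx=O(\eps^2)$ uniformly in $t$. Hence $w^\eps$ two-scale converges to $0$, while $\|w^\eps\|_{L^2}\to\infty$. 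Now take $v^\eps=1+\eps\sin(x_1/\eps^2)$. It tends strongly to $v_0=1$ and satisfies the norm hypothesis, since $|Y|=1$. With $\phi\equiv1$,
\[
\int_0^T\!\!\int_\Omega v^\eps w^\eps=\int_0^T\!\!\int_\Omega w^\eps+\int_0^T\!\!\int_\Omega\sin^2(x_1/\eps^2)\longrightarrow\frac{T|\Omega|}{2}\neq0=\int_0^T\!\!\int_\Omega\!\int_Y v_0w_0 .
\]
So your fallback of assuming $\sup_\eps\|w^\eps\|_{L^2}<\infty$, as in Allaire's original setting, is necessary rather than an optional alternative. It does hold in every use the paper makes of this proposition: there $w^\eps=\widetilde{1}$ or $w^\eps=\widetilde{a^1_\eps}$, and the latter is bounded by Lemma~\ref{L2 integral estimate}. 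Your Banach--Steinhaus argument would only be valid if two-scale convergence were required for all test functions in $L^2((0,T)\times\Omega;C_\#(Y))$. With the bound assumed as a hypothesis, Steps 2 and 3 give a complete proof.
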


The next Propositions yield a characterization of the two-scale limits of gradients of bounded sequences $v^\eps$. This result is crucial for applications to homogenization problems.

\begin{prop}\label{app: 2scale grad} 
Let $v^\eps$ be a bounded sequence in $L^2((0, T); \mathrm{ H}^1(\Omega))$ that converges weakly to a limit $v := v(t, x)$ in $L^2((0, T); \mathrm{ H}^1(\Omega))$. Then, $v^\eps$ also two-scale converges to $v$, and there exists a function $v_1 := v_1(t, x, y)$ in $L^2((0, T) \times \Omega; \mathrm{ H}^1_\#(Y)/ \mathbb{R})$ such that, up to extraction of a subsequence, $\nabla v^\eps$ two-scale converges to $\nabla_x v(t, x) + \nabla_y v_1(t, x, y)$.
\end{prop}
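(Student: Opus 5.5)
The statement to prove is the last one, Proposition~\ref{app: 2scale grad}: the two-scale compactness result for bounded sequences in $L^2((0,T);\mathrm H^1(\Omega))$. I follow Allaire's classical argument, carrying the time variable along as a parameter.

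\textbf{Two-scale convergence of $v^\eps$ itself.} Since $v^\eps$ and $\nabla v^\eps$ are bounded in $L^2((0,T)\times\Omega)$, Proposition~\ref{app: 2scale} gives a subsequence with
\[
v^\eps \xrightharpoonup{2-s} v_0(t,x,y), \qquad \nabla v^\eps \xrightharpoonup{2-s} \xi_0(t,x,y).
\]
To see that $v_0$ does not depend on $y$, take $\Psi\in C^1([0,T]\times\overline\Omega;C^\infty_\#(Y)^3)$ with compact support in $x$ and integrate by parts in
\[
\eps\int_0^T\!\!\int_\Omega \nabla v^\eps\cdot\Psi\left(t,x,\tfrac x\eps\right).
\]
The left side tends to $0$ by boundedness. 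The right side equals
\[
-\int v^\eps\left[\eps\,\dv_x\Psi+\dv_y\Psi\right]\left(t,x,\tfrac{x}{\eps}\right) \to -\int\!\!\int\!\!\int v_0\,\dv_y\Psi .
\]
Hence $\nabla_y v_0=0$ in the sense of distributions, so $v_0=v_0(t,x)$. Testing with functions independent of $y$ shows that the two-scale limit averages to the weak $L^2$ limit. Therefore $v_0=v$, where $v$ is the given weak limit in $L^2((0,T);\mathrm H^1(\Omega))$. Because the limit is identified uniquely, the whole sequence two-scale converges to $v$, not only a subsequence.

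\textbf{Structure of $\xi_0$.} Take $\Psi$ as above with the additional constraint $\dv_y\Psi=0$. Integrating by parts gives
\[
\int \nabla v^\eps\cdot\Psi\left(t,x,\tfrac x\eps\right) = -\int v^\eps\,\dv_x\Psi\left(t,x,\tfrac x\eps\right),
\]
because the $\eps^{-1}$ term vanishes. Passing to the limit yields
\[
\int\!\!\int\!\!\int \xi_0\cdot\Psi = -\int\!\!\int\!\!\int v\,\dv_x\Psi = \int\!\!\int\!\!\int \nabla_x v\cdot\Psi .
\]
Thus $\xi_0-\nabla_x v$ is orthogonal, in $L^2((0,T)\times\Omega\times Y)^3$, to every $\Psi$ with $\dv_y\Psi=0$. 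This class is dense in the divergence-free fields in $L^2((0,T)\times\Omega;L^2_\#(Y)^3)$. The orthogonal complement of the $y$-divergence-free periodic fields consists of $y$-gradients of $\mathrm H^1_\#(Y)$ functions, by the Helmholtz decomposition on the torus. So there is $v_1\in L^2((0,T)\times\Omega;\mathrm H^1_\#(Y)/\RR)$ with
\[
\xi_0=\nabla_x v+\nabla_y v_1 .
\]
I expect this orthogonality and density step to be the main obstacle. The cleanest route is to apply the periodic Helmholtz decomposition pointwise in $(t,x)$, after first checking measurability in $(t,x)$ via Fourier series in $y$.

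The compact-support restriction on the test functions is harmless, since such functions are dense in $L^2$.
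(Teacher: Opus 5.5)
Your proof is correct and is exactly the classical two-scale compactness argument of Allaire, with $t$ carried along as a parameter: you test $\eps\nabla v^\eps$ to get $y$-independence of the limit of $v^\eps$, then test $\nabla v^\eps$ against $y$-divergence-free fields and invoke the periodic Helmholtz decomposition. The paper does not prove this proposition itself; it only recalls it from that literature, so your route coincides with the one it relies on.
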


The main result of two-scale convergence can be generalized to the case of sequences defined in $L^2([0, T] \times \Gamma_\eps)$.

\begin{prop}[see \cite{allaire_2scale, AllaireDamlamianHornung1996}]\label{app: 2scale trace}
Let $v^\eps$ be a sequence in $L^2((0, T) \times \Gamma_\eps)$ such that
\begin{equation}
\eps \int_0^T \int_{\Gamma_\eps} |v^\eps(t, x)|^2 \, dt \, d\sigma_\eps(x) \le C
\end{equation}
where $C$ is a positive constant, independent of $\eps$. There exist a subsequence (still denoted by $\eps$) and a two-scale limit $v_0(t, x, y) \in L^2((0, T) \times \Omega; L^2(\Gamma))$ such that $v^\eps(t, x)$ two-scale converges to $v_0(t, x, y)$ in the sense that
\begin{equation}
\lim_{\eps \to 0} \eps \int_0^T \int_{\Gamma_\eps} v^\eps(t, x) \, \phi\left(t, x, \frac{x}{\eps}\right) dt \, d\sigma_\eps(x) = \int_0^T \int_\Omega \int_\Gamma v_0(t, x, y) \, \phi(t, x, y) \, dt \, dx \, d\sigma(y) 
\end{equation}
for any function $\phi \in C^1([0, T] \times \overline{\Omega}; C^\infty_\#(Y))$.
\end{prop}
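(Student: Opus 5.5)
The proof follows the standard compactness argument for two-scale convergence of boundary integrals. Set $v^\eps := v^\eps(t,x)$ on $(0,T)\times\Gamma_\eps$ and assume
\[
\eps\int_0^T\int_{\Gamma_\eps}|v^\eps|^2 \le C .
\]

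\textbf{Step 1: a bounded functional on a limit space.} For a test function $\phi\in C^1([0,T]\times\overline{\Omega};C^\infty_\#(Y))$, define
\[
\mu_\eps(\phi):=\eps\int_0^T\int_{\Gamma_\eps} v^\eps\,\phi\left(t,x,\tfrac{x}{\eps}\right).
\]
By Cauchy--Schwarz,
\[
|\mu_\eps(\phi)|\le C^{1/2}\Big(\eps\int_0^T\int_{\Gamma_\eps}\big|\phi\left(t,x,\tfrac x\eps\right)\big|^2\Big)^{1/2}.
\]
The key auxiliary fact is the periodic-surface mean-value identity: for continuous $\phi$,
\[
\lim_{\eps\to0}\eps\int_0^T\int_{\Gamma_\eps}\big|\phi\left(t,x,\tfrac x\eps\right)\big|^2
=\int_0^T\int_\Omega\int_\Gamma|\phi(t,x,y)|^2 .
\]
To prove it, cover $\Omega^\delta$ by the cells $\eps k+\eps Y$, $k\in\mathcal I_\eps$. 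Using $|\eps k+\eps\Gamma|=\eps^2|\Gamma|$ together with uniform continuity of $\phi$ in $x$, one gets a Riemann sum for $\int_\Omega\int_\Gamma|\phi|^2$. The security zone contributes a boundary error of order $\delta$. Here one must interpret the limit space with the factor $|\Omega|/|Y|$ as in \eqref{cond: boundary Gammaeps}; since $|Y|=1$, this factor does not appear.

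Combining the two displays gives
\[
\limsup_{\eps\to0}|\mu_\eps(\phi)|\le C^{1/2}\,\|\phi\|_{L^2((0,T)\times\Omega;L^2(\Gamma))}.
\]

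\textbf{Step 2: extraction and Riesz representation.} The space $C^1([0,T]\times\overline\Omega;C^\infty_\#(Y))$ is separable, so choose a countable dense subset. A diagonal argument produces a subsequence along which $\mu_\eps(\phi)$ converges for every $\phi$ in the subset. By the uniform bound of Step 1, the limit $\mu(\phi)$ extends by density to every test function and satisfies
\[
|\mu(\phi)|\le C^{1/2}\|\phi\|_{L^2((0,T)\times\Omega;L^2(\Gamma))}.
\]
The traces on $\Gamma$ of such test functions are dense in $L^2((0,T)\times\Omega;L^2(\Gamma))$, so $\mu$ extends uniquely to a bounded linear functional on that Hilbert space. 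The Riesz representation theorem then yields $v_0\in L^2((0,T)\times\Omega;L^2(\Gamma))$ with
\[
\mu(\phi)=\int_0^T\int_\Omega\int_\Gamma v_0\,\phi .
\]
This is exactly the claimed convergence.

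\textbf{Main obstacle.} The delicate point is Step 1's oscillating surface integral limit, namely controlling the cells cut by the security zone and the error from freezing $x$ within each cell. I would handle it exactly as in Allaire's volume lemma, rescaling each cell to $Y$ and using the uniform continuity of $\phi$. This is also where $r_\eps=\mathcal{O}(\eps)$ (more precisely $r_\eps=\eps$ times a fixed hole) is used. For smaller holes, $\eps|\Gamma_\eps|\to0$ and the limit is trivially zero.
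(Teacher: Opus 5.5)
Your argument is correct and is the standard proof in Allaire and Allaire--Damlamian--Hornung, which the paper cites without reproducing it: Cauchy--Schwarz together with the oscillating surface-integral lemma, extraction along a countable dense set of test functions, and Riesz representation in $L^2((0,T)\times\Omega;L^2(\Gamma))$. One small correction: since $\delta$ is fixed and no holes meet the security zone, the mean-value limit is exactly $\int_0^T\int_{\Omega^\delta}\int_\Gamma|\phi|^2$, not the integral over $\Omega$ up to an $\mathcal{O}(\delta)$ error; you only use the resulting upper bound, so the proof goes through, and it also shows $v_0=0$ on $\Omega\setminus\Omega^\delta$.
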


\section{Well-prepared Initial Condition}\label{sec: appendix well prepared initial condition}

Our analysis is based on the fact that the solution remains nonnegative throughout the time and space. Furthermore, the initial condition should satisfy the compatibility condition as described in \eqref{compatibility condition} and the uniform (in $\eps$) estimates as described in \eqref{CI}. To ensure the solutions remains nonnegative, we consider positive initial condition satisfying the compatibility condition \eqref{compatibility condition}. Then the quasipositive \eqref{quasipositive} structure of the source term along with the positive Neumann boundary condition will ensure that our solution remains nonnegative throughout the time \cite[Lemma 1.1]{pierre2010global}. In this section, we focus on selecting the nonnegative initial condition for each domain $\Omega_{r_{\eps}}$, satisfying the compatibility condition \eqref{compatibility condition} and the uniform estimates \eqref{CI}. For $i=1,2,3$, consider smooth functions $a_{0}^i:\overline{\Omega}\to(0,\infty)$ such that
\[
\nabla a_{0}^i\cdot n=0 \ \text{at}\ \partial\Omega, \ i=1,2,3.
\]
Hence there exists a constant $\mathcal{I}_{min}>0$ such that
\begin{align}\label{minimum of initial condition}
    \xi a_{0}^i(x)\geq \mathcal{I}_{min}, \ \ \forall x\in\Omega \ \text{and} \ \forall\, i=1,2,3,
\end{align}
where $\xi =|Y^*|^{-1}$, where \(|Y^*|\) denotes the measure of the portion of the unit cube outside the hole, as defined in \eqref{area outside the hole in unit cube}.

Next we construct a well prepared initial condition from the smooth functions $a_0^i$ for $i=1,2,3$. We construct our initial conditions for periodically distributed spherical hole. It can be generalized to any type of holes with smooth boundary.

\begin{figure}[htbp]
    \centering
    \begin{tikzpicture}[scale=3]
  
   \draw[thick, fill=green!30] (-0.75, -0.75) rectangle (0.75,0.75);

     \draw[fill=white, thin] (0,0) circle (0.25);
	\draw[black, thick] (0, 0) -- (0.25, 0) 
       node[midway, above, sloped, inner sep=1pt] {\tiny $r_{\varepsilon}$};
	
	\draw[thick, dotted] (0,0) circle (0.5);
	\draw[black, thick] (-0.5, 0) -- (0, 0) 
        node at (-0.37,0.05) {\tiny $2r_{\varepsilon}$};
	\fill[black] (0,0) circle (0.02); 
    \node[below] at (0, -0.05) {\small $x_k$};
    
    \draw[thick, decorate, decoration={brace, amplitude=10pt, mirror}] 
        (-0.75, -0.8) -- (0.75, -0.8) 
        node[midway, below=12pt] {$\varepsilon$};

\end{tikzpicture} 
    \label{fig:unit cell spherical}
    \caption{$\eps$- cell}
\end{figure}

Let $r_{\eps} = \mathcal{\eps}$. Fix an $\eps$-cell inside $\Omega_{r_\eps}$ and let the spherical hole inside it is centered at $x_{k}$ with radius $r_\varepsilon<\frac{\eps}{4}$, which is  strictly contained inside $\Omega$. Let $k \in \mathcal{I}_{\varepsilon}$, where $\#\mathcal{I}_{\eps}$ denotes the number of holes at each $\varepsilon$-scale (recall \eqref{index of holes}). Consider the following equation: for $i=1,2,3$
\begin{equation}\label{epsilon hole equation, domain}
    \left \{
    \begin{aligned}
        -\Delta w^i_{k,\eps}=& 0, \qquad &&  \text{in}\ B(x_k, 2r_{\eps})\setminus \overline{B(x_k, r_{\eps})}\\
        \nabla w^i_{k,\eps}\cdot n= &-\xi\nabla a^i_0 \cdot n \qquad && \text{on}\ \partial B(x_k, r_{\eps})\\
        w_{k,\eps}^i=&0 \qquad && \text{on}\ \partial B(x_k, 2r_{\eps}).
    \end{aligned}
    \right .
\end{equation}
 Thanks to the fact that $\nabla a_0^i\cdot n\in \mathrm{L}^{\infty}(\partial B(x_k, r_{\eps}))$, hence in any $\mathrm{L}^p(\partial B(x_k, r_{\eps}))$ (for $1<p<\infty$), we can say that there exists unique solution $w^i_{k,\eps}$, in the following function space (see \cite[Theorem 3.28 and Lemma 3.22]{troianiello2013elliptic} for further details)
 \begin{align} \label{regularity, auxiliary eqution for initial condition}
 w^{i}_{k,\eps}\in \mathrm{L}^{\infty}\left( B(x_k, 2r_{\eps})\setminus \overline{B(x_k, r_{\eps})}\right)\cap \mathrm{W}^{2,p} \left( B(x_k, 2r_{\eps})\setminus \overline{B(x_k, r_{\eps})}\right).
 \end{align}
Let us define the following function
\begin{align}\label{well prepared initial condition}
    a_{\eps,0}^i:= \xi a_0^i \chi_{\Omega_{r_\eps}}+\sum\limits_{k\in\mathcal{I}_{\eps}} \hat{w}^1_{k,\eps}, \ \forall \, i=1,2,3,
\end{align}
where $\hat{w}^i_{k,\eps}$ is the extension of $w_{k,\eps}^i$ by zero outside $B(x_k, 2r_{\eps})$ and $\chi_{\Omega_{r_{\eps}}}$ denotes the characteristic function of $\Omega_{r_{\eps}}$. We establish that $a^i_{\eps,0}$ is a well prepared initial condition for $i=1,2,3$. Observe that
\[
\supp(\hat{w}^i_{k_1,\eps}) \cap \supp(\hat{w}^i_{k_2,\eps})=\Phi, \ \ \text{if}\ k_1\neq k_2,
\]
where $\Phi$ is the empty set. From the fact above, it is evident that the compatibility conditions is satisfied i.e.,
\begin{align}\label{compatibility in the holed boudary}
    \nabla a_{\eps,0}^i\cdot n=0 \ \  \text{on} \ \partial\Omega\cup \Big\{\cup_{k\in\mathcal{I}_{\eps}} \partial B(x_k, r_{\eps})\Big\}=\partial\Omega_{r_{\eps}}.
\end{align}
Our next goal is to show that for $\eps \ll 1$, the function $a_{\eps,0}^i$ remains nonnegative for $i=1,2,3$. The following Lemma help us guarantee the nonnegativity of the initial condition inside $\Omega
_{\eps}$.
\begin{lemma}\label{local cube equation in epsilon cube}
    Let $w^i_{k,\eps}$ be the solution of equation  \eqref{epsilon hole equation, domain} for $i=1,2,3$ and $k\in\mathcal{I}_{\eps}$. Then the following holds: for $i=1,2,3$
    \[
    | w_{k,\eps}^i|\leq \texttt{W} \, r_{\eps},  
    \]
    where $\texttt{W}>0$ is a constant that does not depend on $\eps$ and $k$.
\end{lemma}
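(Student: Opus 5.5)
We will prove the bound by rescaling the annulus to a fixed reference annulus, so that all constants become independent of $\eps$ and $k$. Set $A_{k,\eps}:=B(x_k,2r_\eps)\setminus\overline{B(x_k,r_\eps)}$ and $A:=B(0,2)\setminus\overline{B(0,1)}$. Define $W(z):=w^i_{k,\eps}(x_k+r_\eps z)$ for $z\in A$. Since $\nabla_z W = r_\eps\,(\nabla_x w^i_{k,\eps})(x_k+r_\eps z)$ and the unit normals coincide under the dilation, $W$ solves
\begin{equation*}
-\Delta_z W=0 \ \text{in } A,\qquad \nabla_z W\cdot n = -r_\eps\,\xi\,g_{k,\eps}(z) \ \text{on } \partial B(0,1),\qquad W=0 \ \text{on } \partial B(0,2).
\end{equation*}
Here $g_{k,\eps}(z):=\nabla a^i_0(x_k+r_\eps z)\cdot n(z)$. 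Because $a^i_0$ is smooth on $\overline\Omega$, we have $\|g_{k,\eps}\|_{\mathrm L^\infty(\partial B(0,1))}\le \|\nabla a^i_0\|_{\mathrm L^\infty(\Omega)}=:G_i$, uniformly in $k$ and $\eps$. By linearity, $W=r_\eps\,\xi\,V$, where $V$ solves the same mixed problem on the fixed annulus $A$ with Neumann datum $-g_{k,\eps}$ and zero Dirichlet datum on the outer sphere.

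The next step is to show $\|V\|_{\mathrm L^\infty(A)}\le C_A\,\|g_{k,\eps}\|_{\mathrm L^\infty(\partial B(0,1))}$, with $C_A$ depending only on $A$. We first get an $\mathrm H^1$ bound from Lax--Milgram on $\{v\in \mathrm H^1(A): v=0 \text{ on } \partial B(0,2)\}$. Coercivity there comes from the Poincaré inequality, and the boundary functional is controlled by the trace inequality on $A$, which gives $\|V\|_{\mathrm H^1(A)}\le C\|g\|_{\mathrm L^2(\partial B(0,1))}$.

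To pass from $\mathrm H^1$ to $\mathrm L^\infty$ we use the mixed-problem regularity cited in \eqref{regularity, auxiliary eqution for initial condition} (\cite[Theorem 3.28, Lemma 3.22]{troianiello2013elliptic}). The two boundary components are disjoint smooth spheres, so there is no genuine mixed-boundary singularity. Hence $\|V\|_{\mathrm W^{1,p}(A)}\le C_p\|g\|_{\mathrm L^p(\partial B(0,1))}$ for some $p>3$, with $C_p$ depending only on $A$. Morrey's embedding $\mathrm W^{1,p}(A)\hookrightarrow \mathrm L^\infty(A)$ then gives $\|V\|_{\mathrm L^\infty}\le C_A G_i$.

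If we prefer to avoid regularity theory, a barrier argument also works. The radial function $\Phi(z)=\lambda(|z|^{-1}-\tfrac12)$ is harmonic, vanishes on $|z|=2$, and has $\nabla\Phi\cdot n=\lambda$ on $|z|=1$ when $n$ points into the hole. Choosing $\lambda=G_i$, the weak maximum principle for the mixed problem (Hopf lemma on the Neumann part) gives $|V|\le \Phi\le G_i/2$. This version has the advantage of giving an explicit constant.

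Undoing the scaling, we obtain $|w^i_{k,\eps}|\le r_\eps\,\xi\,C_A\,\max_i G_i=:\texttt{W}\,r_\eps$. The main obstacle is checking that the constant in the $\mathrm L^\infty$ estimate is genuinely uniform. This is exactly why we rescale to the fixed annulus $A$: on $A$ the constant depends only on the geometry of $A$ and on the $\mathrm L^\infty$ bound of $\nabla a^i_0$, and neither depends on $k$ or $\eps$. With the barrier route, the remaining care points are the sign convention for $n$ on the inner sphere and justifying the comparison principle for the mixed problem.
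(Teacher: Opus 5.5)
Your proposal is correct. Your main route is essentially the paper's: the paper also rescales by $x=x_k+r_\eps y$ to the fixed annulus, extracts the factor $\xi r_\eps$ from the Neumann datum, applies the Troianiello regularity estimate, and then embeds into $\mathrm L^\infty$.

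The one difference in that route is the regularity level. The paper works in $\mathrm W^{2,p}$, so the Neumann datum has to be measured in a fractional boundary Sobolev norm. This requires differentiating $z\mapsto\nabla a_0^i(x_k+r_\eps z)\cdot n(z)$ along the sphere, which is why the paper invokes $\|a_0^i\|_{\mathrm C^2(\overline\Omega)}\le C_{max,in}$. Your $\mathrm W^{1,p}$ estimate ($p>3$) with $\mathrm L^p$ boundary data, followed by Morrey, needs only $\|\nabla a_0^i\|_{\mathrm L^\infty}$.

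Your barrier alternative is a genuinely different and more elementary argument, and it closes without any regularity theory. With $\lambda=G_i$, the functions $U_\pm:=\Phi\mp V$ are harmonic, vanish on $|z|=2$, and have nonnegative outward normal derivative $\partial_\nu U_\pm=\lambda\pm g_{k,\eps}$ on $|z|=1$. The orientation of $n$ only swaps the signs, since just $|g_{k,\eps}|\le\lambda$ is used. Test the weak formulation of $U_\pm$ with $(U_\pm)^-=\max(-U_\pm,0)$, which vanishes on the outer sphere. This gives $-\int_A|\nabla (U_\pm)^-|^2=\int_{|z|=1}\partial_\nu U_\pm\,(U_\pm)^-\ge 0$, hence $(U_\pm)^-\equiv0$. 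So the comparison principle you flag as a remaining care point holds already for $\mathrm H^1$ weak solutions, with no Hopf lemma or boundary regularity needed. This route yields the explicit constant $\texttt{W}=\tfrac{\xi}{2}\max_i\|\nabla a_0^i\|_{\mathrm L^\infty(\Omega)}$, instead of the paper's unspecified product of regularity and embedding constants.
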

\begin{proof}
    Consider the following change of variable
    \[
    x \leftrightarrow x_{k}+r_{\eps} y.
    \]
    Hence we can rewrite the equation \eqref{epsilon hole equation, domain} as: for $i=1,2,3$
    \begin{equation}\label{epsilon hole equation, domain, after transformation of variable}
    \left \{
    \begin{aligned}
         -\Delta_y w^i_{k,\eps}&= 0, \qquad &&  \text{in}\ B(0, 2)\setminus \overline{B(0,1)}=: \Lambda_0
        \\
         \nabla_y w^i_{k,\eps}\cdot n &= -\xi r_{\eps}\nabla a^i_0 \cdot n \qquad && \text{on}\ \partial B(0,1)=: \Lambda_0^1
        \\
         w_{k,\eps}^i &=0 \qquad && \text{on}\ \partial B(0, 2)=:\Lambda_0^2.
    \end{aligned}
    \right .
\end{equation}
The following regularity estimate holds for solutions to \eqref{epsilon hole equation, domain, after transformation of variable} (see \cite[Theorem 3.28]{troianiello2013elliptic})
\begin{align*}
    \| w_{k,\eps}^i\|_{\mathrm{W}^{2,p}(\Lambda_0)} \leq C_{\Lambda_0} \xi r_{\eps} \left\| -\nabla a^i_{\eps,0} \cdot n\right\|_{\mathrm{W}^{\frac 1p, p}(\Lambda_0^1)}, \ \ \forall \, i=1,2,3,
\end{align*}
where $ 1 < p \le \infty$ and $C_{\Lambda_0}$ is a positive constant. By the Sobolev Embedding $ \mathrm{W}^{1, p}(\Lambda_0^1)\hookrightarrow \mathrm{W}^{\frac 1p, p}(\Lambda_0^1)$ it follows that 
\begin{align*}
    \| w_{k,\eps}^i\|_{\mathrm{W}^{2,p}(\Lambda_0)} \leq C_{\Lambda_0} \xi C_{sob,1/p,1} r_{\eps} \left\| -\nabla a^i_{\eps,0} \cdot n\right\|_{\mathrm{W}^{1, p}(\Lambda_0^1)}, \ \ \forall \, i=1,2,3,
\end{align*}
where  $C_{sob,1/p,1}$ denotes the constant associated with the embedding $ \mathrm{W}^{1, p}(\Lambda_0^1)\hookrightarrow \mathrm{W}^{\frac 1p, p}(\Lambda_0^1)$.
\\
Since $\| a^i_{0}\|_{\mathrm C^2(\overline{\Omega})}\leq C_{max,in}$ we have
\[
\|w^{i}_{k,\eps}\|_{\mathrm{W}^{2,p}(\Lambda_0)} \leq 2^{\frac 1p}C_{\Lambda_0} \xi C_{sob,1/p,1} C_{max,in} | \Lambda_0^1|^{\frac{1}{p}} r_{\eps}.  
\]
Recall the following Sobolev Embedding
\begin{equation}\label{sobolev embedding w2p infty}
    \mathrm{W}^{2,p}(\Lambda_0) \hookrightarrow \mathrm{L}^{\infty}(\Lambda_0),
\end{equation}
and let $C_{sob, p,\infty}$ denote the constant that appears in the above embedding. Define 
\[
\texttt{W}:= 2^{\frac 1p} \xi C_{\Lambda_0} C_{sob,1/p,1} C_{max,in} | \Lambda_0^1|^{\frac{1}{p}}C_{sob, p,\infty}.
\]
Invoking \eqref{sobolev embedding w2p infty} yields the desired estimate.
\end{proof}

\begin{corollary} \label{positivity of initial condition at epsilon scale}
    For $1=1,2,3$, let $a^i_{\eps,0}$ be as defined in \eqref{well prepared initial condition}. Then for $\eps\ll 1$, $a^i_{\eps,0}$ always remains nonnegative.
\end{corollary}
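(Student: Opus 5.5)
The plan is to bound $a^i_{\eps,0}$ from below pointwise on $\Omega_{r_\eps}$ by splitting into two regions, using the lower bound \eqref{minimum of initial condition} together with Lemma \ref{local cube equation in epsilon cube}. By \eqref{well prepared initial condition}, on $\Omega_{r_\eps}$ we have
\[
a^i_{\eps,0}(x) = \xi a^i_0(x) + \sum_{k\in\mathcal I_\eps} \hat w^i_{k,\eps}(x).
\]
The functions $\hat w^i_{k,\eps}$ have pairwise disjoint supports, since each is supported in $\overline{B(x_k,2r_\eps)}$, $r_\eps<\eps/4$, and the centers $x_k$ of different $\eps$-cells are at distance at least $\eps$. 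Hence at any point $x$ at most one term of the sum is nonzero.

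\textbf{Case 1.} If $x\in\Omega_{r_\eps}$ lies outside every annulus $B(x_k,2r_\eps)\setminus\overline{B(x_k,r_\eps)}$, then the sum vanishes. In that case $a^i_{\eps,0}(x)=\xi a^i_0(x)\ge \mathcal I_{min}>0$ by \eqref{minimum of initial condition}.

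\textbf{Case 2.} If $x$ lies in the annulus around some $x_k$, then exactly one term contributes, and Lemma \ref{local cube equation in epsilon cube} gives $|\hat w^i_{k,\eps}(x)|\le \texttt W\, r_\eps$. Therefore
\[
a^i_{\eps,0}(x)\ \ge\ \mathcal I_{min} - \texttt W\, r_\eps .
\]
Since $r_\eps=\mathcal O(\eps)\to 0$ and $\texttt W$ is independent of $\eps$ and $k$, we can choose $\eps_0$ such that $\texttt W r_\eps\le \mathcal I_{min}$ for all $\eps<\eps_0$. This yields $a^i_{\eps,0}\ge 0$, in fact $\ge \mathcal I_{min}/2$ once $\texttt W r_\eps\le \mathcal I_{min}/2$, for all $i=1,2,3$.

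The main point to check carefully is that the constant $\texttt W$ really is uniform in $k$ and $\eps$. This holds because the rescaled problem \eqref{epsilon hole equation, domain, after transformation of variable} is posed on the fixed annulus $\Lambda_0$, and its Neumann data $-\xi r_\eps \nabla a^i_0\cdot n$ is controlled by $\|a^i_0\|_{C^2(\overline\Omega)}$ independently of the center $x_k$.

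A minor side issue is that \eqref{well prepared initial condition} is written with $\hat w^1_{k,\eps}$; it should be read as $\hat w^i_{k,\eps}$, and the argument is identical for each $i$.
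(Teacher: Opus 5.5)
Your proposal is correct and follows essentially the same route as the paper. You split $\Omega_{r_\eps}$ into points outside every annulus, where $a^i_{\eps,0}=\xi a^i_0\ge \mathcal I_{min}$ by \eqref{minimum of initial condition}, and points in the unique annulus around some $x_k$, where Lemma \ref{local cube equation in epsilon cube} gives $a^i_{\eps,0}\ge \mathcal I_{min}-\texttt{W}\,r_\eps\ge 0$ once $\eps$ is small. Your justification of disjointness (radius $2r_\eps<\eps/2$, centers at least $\eps$ apart) and of the uniformity of $\texttt{W}$ in $k$ is slightly more explicit than the paper's, which simply asserts that the supports are disjoint and contained in $\Omega^{\delta}$.
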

\begin{proof}
    The proof follows from the fact that the support of each $w^i_{k,\eps}$ is disjoint for a fixed $i$ and contained in $\Omega^{\delta}$ as described in \eqref{security zone}. Let $x\in \Omega\setminus\Omega^{\delta}$ or $x\not\in$\, $\supp (w_{k,\eps}^i)$ for all $k\in\mathcal{I}_{\eps}$. It implies $a^i_{\eps,0}\geq 0$. Let $x\in \Omega^{\delta}$ and there exists $k\in\mathcal{I}_{\eps}$ such that $x\in$\, $\supp (w_{k,\eps}^i)$. Observe that there exists unique $k=k_x\in \mathcal{I}_{\eps}$ such that $x\in$\, $\supp (w_{k_x,\eps}^i)$. Hence
    \begin{align*}
        a^i_{\eps,0} (x) = \xi a^i_{0} (x) + w^i_{k_x,\eps}(x) \geq  \mathcal{I}_{\min}- \texttt{W} r_{\eps}, \ x\in \Omega^{\delta}
    \end{align*}
   where we have used the inequality \eqref{minimum of initial condition} and Lemma \ref{local cube equation in epsilon cube} for the inequality. Choosing $\eps\ll 1$ yields the desired result.  
\end{proof}

In the next lemma we derive uniform integrability estimate of the well prepared initial condition \eqref{well prepared initial condition} with respect to $\eps$.
\begin{lemma}\label{uniform H1 estimate initial condition}
     For $i=1,2,3$, let $w^i_{k,\eps}$ satisfies equation  \eqref{epsilon hole equation, domain} and $\hat{w}^i_{k,\eps}$ be it's extension to $\Omega_{r_{\eps}}$ by zero. Then, there exists a constant $C_w>0$, independent of $\eps$ and $k$, such that
    \begin{align*}
       \left\|\sum_{k\in\mathcal{I}_{\eps}} \hat{w}^i_{k,\eps}\right\|_{\mathrm{L}^2(\Omega_{r_{\eps}})} \leq & C_{W}  r_{\eps},\\
    \left\|\sum_{k\in\mathcal{I}_{\eps}} \hat{w}^i_{k,\eps}\right\|_{\mathrm{ H}^1(\Omega_{r_{\eps}})} \leq & C_{W},\\
    \left\|\sum_{k\in\mathcal{I}_{\eps}} \hat{w}^i_{k,\eps}\right\|_{\mathrm{L}^4(\Omega_{r_{\eps}})} \leq & C_{W} r_{\eps}.
    \end{align*}
\end{lemma}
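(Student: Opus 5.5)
The plan is to reduce everything to a single rescaled cell via the change of variables $x = x_k + r_\eps y$ used in the proof of Lemma \ref{local cube equation in epsilon cube}. The supports of the $\hat w^i_{k,\eps}$ are pairwise disjoint annuli $A_k := B(x_k,2r_\eps)\setminus\overline{B(x_k,r_\eps)}$. Hence for any $p$ the norms add up:
\[
\Big\|\sum_k \hat w^i_{k,\eps}\Big\|^p_{\mathrm L^p(\Omega_{r_\eps})}=\sum_{k\in\mathcal I_\eps}\|w^i_{k,\eps}\|^p_{\mathrm L^p(A_k)},
\]
and the same identity holds for the gradients. One point must be checked first: $\hat w^i_{k,\eps}$ actually lies in $\mathrm H^1$. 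This holds because $w^i_{k,\eps}$ vanishes on $\partial B(x_k,2r_\eps)$, so the zero extension has no jump across the outer sphere. (Across the inner sphere there is nothing to check, since that sphere is a boundary of the hole.)

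\textbf{Step 1: $\mathrm L^2$ and $\mathrm L^4$ bounds.} Lemma \ref{local cube equation in epsilon cube} gives $|w^i_{k,\eps}|\le \texttt{W} r_\eps$ pointwise, and $|A_k| = c\,r_\eps^3$. Therefore
\[
\sum_k\|w^i_{k,\eps}\|^p_{\mathrm L^p(A_k)}\le \#\mathcal I_\eps\,c\,r_\eps^3(\texttt{W}r_\eps)^p .
\]
Since $r_\eps=\mathcal O(\eps)$, we have $\#\mathcal I_\eps\, r_\eps^3\le C\eps^3\#\mathcal I_\eps\le C\gamma_1$ by \eqref{cond: boundary Gammaeps}. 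Taking $p=2$ and $p=4$ and then $p$-th roots gives norms bounded by $C r_\eps$, which is exactly the first and third claims.

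\textbf{Step 2: gradient bound.} In rescaled variables $\nabla_x w = r_\eps^{-1}\nabla_y w$ and $dx = r_\eps^3\,dy$, so
\[
\|\nabla_x w^i_{k,\eps}\|^2_{\mathrm L^2(A_k)}=r_\eps\|\nabla_y w^i_{k,\eps}\|^2_{\mathrm L^2(\Lambda_0)}.
\]
The $\mathrm W^{2,p}$ estimate from the proof of Lemma \ref{local cube equation in epsilon cube}, used with $p=2$ (valid since $\Lambda_0$ is bounded), gives $\|\nabla_y w\|_{\mathrm L^2(\Lambda_0)}\le C r_\eps$. Hence each cell contributes at most $C r_\eps^3$. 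Summing over $k$ gives at most $C\,\#\mathcal I_\eps\, r_\eps^3 \le C\gamma_1$, so the $\mathrm H^1$ norm is bounded by a constant independent of $\eps$. The bound is in fact $\mathcal O(r_\eps)$, but only boundedness is claimed.

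\textbf{Main obstacle.} The delicate point is the uniformity of the rescaled elliptic estimate in $k$ and $\eps$. After rescaling, the boundary datum is $-\xi r_\eps\nabla a_0^i(x_k+r_\eps y)\cdot n(y)$. Its $\mathrm W^{1,p}(\Lambda^1_0)$ norm must be bounded by $C r_\eps\|a^i_0\|_{C^2(\overline\Omega)}$ uniformly in $k$. This holds because differentiating in $y$ only produces further factors of $r_\eps\le 1$. Once this is settled, the constant $C_{\Lambda_0}$ depends only on the fixed annulus $\Lambda_0$, and the counting bound $\eps^3\#\mathcal I_\eps\le\gamma_1$ together with $r_\eps\lesssim\eps$ closes all three estimates.
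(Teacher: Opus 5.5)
Your proof is correct, but it takes a different route from the paper's. The paper does not use the pointwise bound of Lemma \ref{local cube equation in epsilon cube} here. It tests the rescaled cell problem on $\Lambda_0$ with $w^i_{k,\eps}$ itself and bounds the Neumann term $\xi r_\eps\int_{\Lambda_0^1}|\nabla a^i_0\cdot n|\,|w^i_{k,\eps}|$ by the trace inequality on $\Lambda_0$ and Young's inequality. The Poincar\'e inequality coming from the Dirichlet condition on $\Lambda_0^2$ then gives the single energy bound $\|w^i_{k,\eps}\|_{\mathrm H^1(\Lambda_0)}\le C r_\eps$. From this, scaling gives $C r_\eps^5$ and $C r_\eps^3$ per annulus for the $\mathrm L^2$ and gradient norms squared. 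The $\mathrm L^4$ bound comes from the embedding $\mathrm H^1(\Lambda_0)\hookrightarrow\mathrm L^4(\Lambda_0)$ applied before rescaling, which gives $C r_\eps^7$ per annulus. In all three cases the count $r_\eps^3\#\mathcal I_\eps\le C\gamma_1$ finishes the argument.

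You instead import the $\mathrm W^{2,p}$ regularity of the mixed Neumann--Dirichlet problem through Lemma \ref{local cube equation in epsilon cube}, and read off the $\mathrm L^p$ bounds from $|w^i_{k,\eps}|\le \texttt{W}\,r_\eps$ and $|A_k|\sim r_\eps^3$. Your route is shorter once that lemma is available, and it yields $\|\sum_k\hat w^i_{k,\eps}\|_{\mathrm L^p}\le C r_\eps$ for every $p\le\infty$ at once. The cost is that it needs boundary regularity for the mixed problem on the annulus. It also needs $a^i_0\in C^2$ so that the rescaled datum is uniformly bounded in $\mathrm W^{1,p}(\Lambda_0^1)$; this is the uniformity issue you correctly flag. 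The paper's energy argument needs only Lax--Milgram-level information and $\|\nabla a^i_0\|_{\mathrm L^\infty}$, so uniformity in $k$ is automatic there.

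One side remark of yours is false: the $\mathrm H^1$ norm is not $\mathcal O(r_\eps)$. Only its $\mathrm L^2$ part is. The Neumann datum is $\mathcal O(1)$ in the $x$-variables, so $\nabla_x w^i_{k,\eps}=\mathcal O(1)$ on a set of total measure $\sim (r_\eps/\eps)^3|\Omega|$. Hence, for $\nabla a^i_0\not\equiv 0$, the gradient part is of order $(r_\eps/\eps)^{3/2}$, which does not vanish when $r_\eps\sim\eps$. This does not affect the lemma, which only claims boundedness.
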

\begin{proof}
    We will revisit the equation as described in \eqref{epsilon hole equation, domain, after transformation of variable}. The change of variable $x \leftrightarrow x_{k}+r_{\eps} y$ yields
      \begin{equation}\label{epsilon hole equation, domain, after transformation of variable, revisit}
    \left \{
    \begin{aligned}
        -\Delta_y w^i_{k,\eps}=& 0, \qquad &&  \text{in}\ B(0, 2)\setminus \overline{B(0,1)}=: \Lambda_0\\
        \nabla_y w^i_{k,\eps}\cdot n= &- \xi r_{\eps}\nabla a^i_0 \cdot n \qquad && \text{on}\ \partial B(0,1)=: \Lambda_0^1\\
        w_{k,\eps}^i=&0 \qquad && \text{on}\ \partial B(0, 2)=:\Lambda_0^2.
    \end{aligned}
    \right .
\end{equation}
Testing this equation with $w^i_{k,\eps}$ yields: for $i=1,2,3$
\begin{align}
    \int_{\Lambda_0} | \nabla_y w^i_{k,\eps}|^2 \, dy=& \xi \int_{\Lambda_0^1} r_{\eps} (-\nabla a^i_{0}\cdot n) w^i_{k,\eps} \, d\sigma_y \nonumber \\
    \leq & \xi \int_{\Lambda_0^1} r_{\eps} \| \nabla a^i_{0} \|_{\mathrm{L}^{\infty}(\overline{\Omega})} |w^i_{k,\eps}| \, d\sigma_y \nonumber\\
    \leq & \frac 12 C_1 \xi^2 r^2(\eps) \| \nabla a^i_{0} \|_{\mathrm{L}^{\infty}(\overline{\Omega})}^2 | \Lambda_0^1| + \frac{1}{2C_1} \int_{\Lambda_0^1} |w^i_{k,\eps}|^2\, d\sigma_y, \label{step before poincare, initil condition}
\end{align}
where $C_1$ is a positive constant to be chosen later. We use the following trace embedding
\[
\| f\|_{\mathrm{L}^2(\Lambda_0^1\cup \Lambda_0^2)} \leq C_{trace} \|f\|_{\mathrm{ H}^1(\Lambda_0)}, \ \forall \, f\in \mathrm{ H}^1(\Lambda_0), \ C_{trace}>0,
\]
in \eqref{step before poincare, initil condition}. This leads to
\begin{align}\label{step 2 before poincare, initil condition}
    \int_{\Lambda_0} | \nabla_y w^i_{k,\eps}|^2 \, dy \leq \frac 12 C_1 \xi^2 r^2(\eps) \| \nabla a^i_{0} \|_{\mathrm{L}^{\infty}(\overline{\Omega})}^2 | \Lambda_0^1| + \frac{C_{trace}^2}{2C_1} \| w^i_{k,\eps}\|_{\mathrm{ H}^1(\Lambda_0)}.
\end{align}
Owing to the fact that $|\Lambda_0^2|\neq 0$, we obtain the following Poincar\'e inequality
\[
C_{p}\int_{\Lambda_0}|w^i_{k,\eps}|^2 \, dy \leq \int_{\Lambda_0} | \nabla_y w^i_{k,\eps}|^2 \, dy.
\]
Using this in \eqref{step 2 before poincare, initil condition}, we obtain the following relation
\begin{align*}\label{step 3 before poincare, initil condition}
    \left( \frac 12+ \frac{C_p}{2}\right) \| w^i_{k,\eps}\|_{\mathrm{ H}^1(\Lambda_0)}^2 \leq C_2 r^2_\eps+ \frac{C_{trace}^2}{2C_1} \| w^i_{k,\eps}\|_{\mathrm{ H}^1(\Lambda_0)},
\end{align*}
where $C_2:= \frac 12 \xi^2 C_1 \max_{i=1,2,3}\left\{\| \nabla a^i_{0} \|_{\mathrm{L}^{\infty}(\overline{\Omega})}^2\right\} | \Lambda_0^1|$. Let us choose $C_1$ such that $\frac{C_{trace}^2}{C_1}\times\frac{1}{C_p+1}\leq \frac 12$. It leads us to the following bound
\begin{align}\label{H1 estimate around origin, initial condition}
    \| w^i_{k,\eps}\|_{\mathrm{ H}^1(\Lambda_0)}^2 \leq C_3 r^2_\eps,
\end{align}
where $C_3:=\frac{4C_2}{1+C_p}$. The above estimate can be expressed as
\begin{equation}\label{auxiliary relation to obtain L4, initial condition}
\left \{
\begin{aligned}
    \int_{\Lambda_0} |w^i_{k,\eps}|^2 \, dy \leq & C_3 r^2_\eps,\\
    \int_{\Lambda_0} |\nabla_y w^i_{k,\eps}|^2 \, dy \leq & C_3 r^2_\eps.
    \end{aligned}
    \right .
\end{equation}
We replace $x\leftrightarrow x_k+r_{\eps}y$. It yields
\begin{align*}
    \int\limits_{B(x_k, 2r_{\eps})\setminus\overline{B(x_k, r_{\eps})}}| w^i_{k,\eps}|^2 r_{\eps}^{-3}\, dx  \leq & C_3 r^2_\eps,\\
    \int\limits_{B(x_k, 2r_{\eps})\setminus\overline{B(x_k, r_{\eps})}}|\nabla_x w^i_{k,\eps}|^2 r_{\eps}^{-1}\, dx  \leq & C_3 r^2_\eps.
\end{align*}
The above relation can be rewritten as
\begin{equation} \label{H1 estimate around arbitrary centre, initial condition}
\left \{
\begin{aligned}
     \int\limits_{B(x_k, 2r_{\eps})\setminus\overline{B(x_k, r_{\eps})}}| w^i_{k,\eps}|^2\, dx  \leq & C_3 r^5_\eps,\\
    \int\limits_{B(x_k, 2r_{\eps})\setminus\overline{B(x_k, r_{\eps})}}|\nabla_x w^i_{k,\eps}|^2\, dx  \leq & C_3 r^3_\eps.
    \end{aligned}
    \right .
    \end{equation}
Summing them over all holes, we obtain
\begin{equation} \label{total sum H1 estimate around arbitrary centre, initial condition}
\left \{
\begin{aligned}
     \sum_{k\in\mathcal{I}_{\eps}}\int\limits_{B(x_k, 2r_{\eps})\setminus\overline{B(x_k, r_{\eps})}}| w^i_{k,\eps}|^2\, dx  \leq & C_3 r^5_\eps \# \mathcal{I}_{\eps},\\
    \sum_{k\in\mathcal{I}_{\eps}}\int\limits_{B(x_k, 2r_{\eps})\setminus\overline{B(x_k, r_{\eps})}}|\nabla_x w^i_{k,\eps}|^2\, dx  \leq & C_3 r^3_\eps \# \mathcal{I}_{\eps}.
\end{aligned}
\right .
\end{equation}
We use the fact that $r^3_\eps\#\mathcal{I}_{\eps}\leq \gamma_1$ \eqref{cond: boundary Gammaeps}. Furthermore, thanks to the fact that the support of each $w^i_{k,\eps}$ is disjoint (for a fixed $i$) and they can be extended as $\mathrm{ H}^1$ functions outside $B(x_k, 2r_{\eps})$ by zero, we have that
\begin{equation} \label{total sum H1 estimate around arbitrary centre, initial condition extended}
\left \{
\begin{aligned}
     \left\|\sum_{k\in\mathcal{I}_{\eps}} \hat{w}^i_{k,\eps}\right\|_{\mathrm{L}^2(\Omega_{r_{\eps}})}^2 \leq & C_3 \gamma_1 r^2_\eps,\\
    \left\|\sum_{k\in\mathcal{I}_{\eps}} \hat{w}^i_{k,\eps}\right\|_{\mathrm{ H}^1(\Omega_{r_{\eps}})}^2 \leq & C_3 \gamma_1,
\end{aligned}
\right .
\end{equation}
where $\hat{w}^i_{k,\eps}$ is the extension of $w^i_{k,\eps}$ by zero beyond $B(x_k, 2r_{\eps})$. We use the following Sobolev embedding
\[
\| f\|_{\mathrm{L}^4(\Lambda_0)} \leq C_{sob} \| f\|_{\mathrm{ H}^1(\Lambda_0)}, \ \forall\, f\in \mathrm{ H}^1(\Lambda_0), \ C_{sob}>0,
\]
in \eqref{auxiliary relation to obtain L4, initial condition}. It yields
\[
\| w^i_{k,\eps}\|_{\mathrm{L}^4(\Lambda_0)}^4 \leq C_3^2 C_{sob}^2 r^4_\eps.
\]
Again, change of variable $x\leftrightarrow x_k+r_{\eps}y$ yields
\[
\| w^i_{k,\eps}\|_{\mathrm{L}^4\left(B(x_k, 2r_{\eps})\setminus\overline{B(x_k, r_{\eps})}\right)}^4 \leq C_3^2 C_{sob}^2 r^7_\eps.
\]
Using the fact that the support of the functions $w^i_{k,\eps}$ are disjoint (for a fixed $i$), we have that
\[
\left\|\sum_{k\in\mathcal{I}_{\eps}} \hat{w}^i_{k,\eps}\right\|_{\mathrm{L}^4(\Omega_{r_{\eps}})}^4 \leq C_3^2 C_{sob}^2 \gamma_1 r^4_\eps,
\]
where $\gamma_1$ is as defined in \eqref{cond: boundary Gammaeps}. Choice of the constant $C_{W}:= \max\{\sqrt{C_{3}\gamma_1}, \sqrt{C_3 C_{sob}}\gamma_1^{\frac 14}\}$, yields our result.
\end{proof}
The above $\mathrm{L}^2$ integral estimate helps us to obtain two-scale limit (see Appendix~\ref{sec: appendix two-scale}) of the well prepared initial data $a^i_{\eps,0}$, for each $i=1,2,3$.
\begin{lemma}\label{two scale limit of initial condition}
  For $i=1,2,3$, consider the well prepared initial condition $a^i_{\eps,0}$ as described in \eqref{well prepared initial condition}. Then, as $\varepsilon \to 0$, the two-scale limit $($see Appendix~\ref{sec: appendix two-scale}$)$ is given by
\[
a^i_{\varepsilon,0} \xrightharpoonup[]{2\text{-scale}} \xi a_0^i \chi_{Y^*}.
\]
\end{lemma}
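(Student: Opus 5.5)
The plan is to split the well-prepared datum \eqref{well prepared initial condition} into its two pieces and treat them separately:
\[
a^i_{\eps,0} = \xi a^i_0\,\chi_{\Omega_{r_\eps}} + \sum_{k\in\mathcal I_\eps}\hat w^i_{k,\eps}.
\]
We then show that the corrector sum has two-scale limit $0$ and that the first piece has two-scale limit $\xi a^i_0(x)\chi_{Y^*}(y)$. Linearity of two-scale limits then gives the claim. Throughout, we fix a test function $\phi\in C^1([0,T]\times\overline\Omega;C^\infty_\#(Y))$ as in the definition of Appendix~\ref{sec: appendix two-scale}. Since the data do not depend on $t$, it suffices to consider $\phi=\phi(x,y)$.

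\textbf{Step 1 (the correctors vanish).} By Lemma~\ref{uniform H1 estimate initial condition},
\[
\Big\|\sum_{k\in\mathcal I_\eps}\hat w^i_{k,\eps}\Big\|_{\mathrm L^2(\Omega_{r_\eps})}\le C_W r_\eps\to 0 .
\]
Hence, by Cauchy--Schwarz,
\[
\Big|\int_\Omega \sum_{k}\hat w^i_{k,\eps}\,\phi\big(x,\tfrac x\eps\big)\,dx\Big|
\le C_W r_\eps\,\|\phi\|_{\mathrm L^\infty}|\Omega|^{1/2}\to 0 .
\]
So this piece two-scale converges to $0$. This step uses only strong $\mathrm L^2$ convergence to zero and is routine.

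\textbf{Step 2 (the oscillating characteristic function).} We need
\[
\int_\Omega \xi a^i_0(x)\chi_{\Omega_{r_\eps}}(x)\phi\big(x,\tfrac x\eps\big)\,dx\;\to\;\int_\Omega\int_Y \xi a^i_0(x)\chi_{Y^*}(y)\phi(x,y)\,dy\,dx .
\]
We decompose $\Omega$ into the $\eps$-cells $\eps k+\eps Y$. On every cell with $k\in\mathcal I_\eps$ we have, by construction of $R_\eps$ with $r_\eps=\mathcal O(\eps)$ (so $\sigma_\eps\overline T$ is the fixed hole $\overline T$), the identity $\chi_{\Omega_{r_\eps}}(x)=\chi_{Y^*}(x/\eps)$. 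On the union of these cells the integrand is therefore $\Psi(x,x/\eps)$ with
\[
\Psi(x,y):=\xi a^i_0(x)\chi_{Y^*}(y)\phi(x,y).
\]
This $\Psi$ is continuous in $x$, uniformly in $y$, and bounded measurable and $Y$-periodic in $y$. It is thus an admissible test function in the sense of Allaire, being of the form $g(x,y)\chi_{Y^*}(y)$ with $g$ continuous. The classical mean-value property then gives $\int\Psi(x,x/\eps)\,dx\to\int\!\!\int\Psi(x,y)\,dy\,dx$. Concretely, this follows cell by cell: freeze $x$ at $\eps k$, make an error $O(\eps)$ per unit volume by uniform continuity of $a^i_0\phi$, and use $\eps^3\#\mathcal I_\eps\le\gamma_1$ from \eqref{cond: boundary Gammaeps} to sum.

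This is the same fact the paper already uses in the proof of the two-scale limit of $F_\eps$, namely that $\Tilde 1$ two-scale converges to $\chi_{Y^*}(y)$. We invoke it in exactly that form: apply it with the smooth multiplier $\xi a^i_0(x)$ absorbed into the test function, i.e.\ replace $\phi$ by $\xi a^i_0\phi$, which lies in the admissible class since $a^i_0$ is smooth on $\overline\Omega$.

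\textbf{Main obstacle.} The delicate point is the contribution of the cells indexed by $\mathcal J_\eps$, namely the boundary cells and the security zone $\Omega\setminus\Omega^\delta$, where $\chi_{\Omega_{r_\eps}}\equiv1$ rather than $\chi_{Y^*}(x/\eps)$. To obtain the stated limit without an extra term, we treat $\Omega_{r_\eps}$ exactly as in the paper's convention for $\Tilde 1\rightharpoonup\chi_{Y^*}$ (Section~\ref{sec: Homogenization eps}). We first try to bound the $\mathcal J_\eps$ part by
\[
\|a^i_0\phi\|_{\mathrm L^\infty}\,\big|\textstyle\bigcup_{k\in\mathcal J_\eps}(\eps k+\eps Y)\big| .
\]
Under that convention, where the perforation is regarded as filling $\Omega$ up to an $O(\eps)$ layer, this quantity tends to $0$. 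Combining Steps 1 and 2 then yields $a^i_{\eps,0}\xrightharpoonup[]{2\text{-scale}}\xi a^i_0\chi_{Y^*}$, as claimed.
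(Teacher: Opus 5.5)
Your decomposition and Step 1 are exactly the paper's proof. The paper also discards $\sum_k\hat w^i_{k,\eps}$ via the $C_W r_\eps$ bound of Lemma~\ref{uniform H1 estimate initial condition}, and then simply asserts that $\xi a^i_0\chi_{\Omega_{r_\eps}}$ two-scale converges to $\xi a^i_0\chi_{Y^*}$. Your cell-by-cell argument on the cells with $k\in\mathcal I_\eps$ correctly justifies that assertion on $\Omega^\delta$ (granted $\sigma_\eps\equiv1$, as you assume). The gap is your final step, where you claim the cells indexed by $\mathcal J_\eps$ contribute nothing in the limit.

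That claim fails in the paper's geometry, and no convention rescues it. The width $\delta>0$ of the security zone is fixed. Any cell meeting $\{x\in\Omega:\dist(x,\partial\Omega)<\delta-\sqrt3\,\eps\}$ has its hole outside $\Omega^\delta$, so it lies in $\mathcal J_\eps$. Hence $\big|\Omega\cap\bigcup_{k\in\mathcal J_\eps}(\eps k+\eps Y)\big|\ge|\Omega\setminus\Omega^\delta|-o(1)$, and your $\mathrm L^\infty$ bound does not tend to zero. Citing the use of $\Tilde{1}\rightharpoonup\chi_{Y^*}$ in Section~\ref{sec: Homogenization eps} does not help, since that is the same unproved claim.

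In fact the $\mathcal J_\eps$ term genuinely survives, because $\chi_{\Omega_{r_\eps}}\equiv1$ on $\Omega\setminus\Omega^\delta$. Take $\phi(x,y)=\eta(x)\zeta(y)$ with $0\le\eta\in C^\infty_c(\Omega\setminus\overline{\Omega^\delta})$ and $0\le\zeta\in C^\infty_\#(Y)$ supported in $T$, both nontrivial. Then the left-hand side tends to $\xi\int_\Omega a^i_0\eta\,dx\int_Y\zeta\,dy>0$, while the claimed limit gives $0$. What your argument actually proves is
\[
a^i_{\eps,0}\xrightharpoonup{2\text{-scale}}\xi a^i_0(x)\big(\chi_{\Omega^\delta}(x)\chi_{Y^*}(y)+\chi_{\Omega\setminus\Omega^\delta}(x)\big).
\]
To reach the stated limit you must change the setting, not just the proof. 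For instance, let $\delta=\delta_\eps\to0$. Then $\Omega\cap\bigcup_{k\in\mathcal J_\eps}(\eps k+\eps Y)\subset\{x\in\Omega:\dist(x,\partial\Omega)\le\delta_\eps+\sqrt3\,\eps\}$, whose measure tends to $0$, and your bound on the $\mathcal J_\eps$ cells closes the argument. The paper's one-line proof passes over exactly this point.
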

\begin{proof}
    Thanks to Lemma \ref{uniform H1 estimate initial condition}, we have that
    \[
    \left\|\sum_{k\in\mathcal{I}_{\eps}} \hat{w}^i_{k,\eps}\right\|_{\mathrm{L}^2(\Omega_{r_{\eps}})} \to 0, \ \text{as}\ \eps\to 0.
    \]
    Hence, the 2-scale limit of $\displaystyle{\sum_{k\in\mathcal{I}_{\eps}} \hat{w}^i_{k,\eps}}$ is $0$. Furthermore, since
    \[
    \alpha a^i_0 \chi_{\Omega_{r_{\eps}}} \xrightharpoonup[]{2-scale} \xi a_0^i \chi_{Y^*},
    \]
    we conclude our result.
    \end{proof}

\begin{remark}\label{remark 5}
When $r_{\eps} = \mathcal{O}(\eps^{\alpha})$ and $\alpha >1$ we observe that 
\[
\chi_{\Omega_{r_{\eps}}} \to 1 \text{ strongly in } \mathrm L^2(\Omega).
\]
Hence a similar construction as done for the case of $r_{\eps} = \mathcal{O}(\eps)$, will produce a sequence of initial data $\{a^i_{\varepsilon,0} \}_{\eps}$ which strongly converges to $a^i_{0}$ in $\mathrm L^2(\Omega)$ and satisfies the compatibility condition \eqref{compatibility condition} and the uniform estimates \eqref{CI}.  
\end{remark}


 	\vspace{.3cm}
	\textbf{Data Availability:} Data sharing is not applicable to this article as no datasets were generated or analyzed during the current study.

	\vspace{.3cm}
	\textbf{Conflict of Interest:} The authors declare that there are no conflicts of interest relevant to the content of this manuscript.

\bibliographystyle{plain}
\begin{singlespace}
\bibliography{biblio}
\end{singlespace}
\end{document}